\def\e{\textbf{e}}
\def\R{\mathbb{R}}
\newcommand{\bmat}{\left[\begin{matrix}}
\newcommand{\emat}{\end{matrix}\right]}
\newtheorem{theorem}{Theorem}
\newtheorem{proposition}[theorem]{Proposition}
\newtheorem{lemma}[theorem]{Lemma}
\numberwithin{theorem}{section}
\numberwithin{equation}{section}
\theoremstyle{remark}
\newtheorem*{remark}{Remark}
\theoremstyle{definition}
\newcommand{\Z}{\mathbb{Z}}
\newcommand{\C}{\mathbb{C}}
\newcommand{\re}{\mathrm{Re}\,}
\newcommand{\im}{\mathrm{Im}\,}
\newcommand{\rank}{\mathrm{rank\,}}
\newcommand{\spn}{\mathrm{span}}
\newcommand{\GL}{\mathrm{GL}}
\newcommand{\SL}{\mathrm{SL}}
\newcommand{\rk}{\mathrm{rk\,}}
\newcommand{\ga}{\mathfrak{a}}
\newcommand{\Lg}{\mathfrak{g}}
\newcommand{\vol}{\mathrm{vol}}
\newcommand{\diag}{\mathrm{diag}}
\newcommand{\Id}{\mathrm{Id}}
\newcommand{\ve}{\mathbf{e}}
\newcommand{\G}{\Gamma}
\newcommand{\ignore}[1]{{}}
    \title[A truncated inner product formula]{A truncated inner product formula in the geometry of numbers}
    \author{Seokho Jin, Seungki Kim}
\begin{document}
\maketitle

\ignore{
\begin{abstract}
We provide an asymptotic inner product formula for truncations of the pseudo-Eisenstein series on $\SL(n,\Z)\backslash\SL(n,\R)$ of the form
\begin{align*}
E_{f}(g) = \sum_{L \subseteq \Z^n} f(\det Lg)
\end{align*}
for $f \in C^\infty_0(\R_{\geq 0})$, where the sum ranges over all primitive rank $k < n$ sublattices of $\Z^n$. We consider both the Arthur and the ``harsh'' truncations, with uses in the geometry of numbers in mind. As an application, we present an improved estimate on the number of the
primitive sublattices of a fixed lattice of bounded determinant, or equivalently, the number of the rational points on a Grassmannian variety of bounded height.
\end{abstract}

\section*{수정 제안 1 (핵심 내용을 강조한 버전)}

\begin{abstract}
We study the statistical distribution of primitive sublattices in the space of lattices $\SL(n,\Z)\backslash\SL(n,\R)$. A central difficulty in this area is that the second moment of the counting function for rank $k$ sublattices, where $2 \le k \le n-2$, diverges. To overcome this, we analyze the inner product of truncated pseudo-Eisenstein series of the form $E_{f}(g) = \sum_{L} f(\det Lg)$, where the sum is over primitive rank $k$ sublattices of $\Z^n$. We establish an asymptotic formula for this inner product for both the standard Arthur truncation and a "harsh" truncation that vanishes outside a compact set. Our analysis relies on several technical results of independent interest, including a proof of the uniform moderate growth (UMG) property for these pseudo-Eisenstein series and a new method for resolving singularities in the Maass-Selberg relations. As a primary application, we obtain a significant improvement on the discrepancy bound for the number of primitive sublattices. For almost every lattice, we improve the error term in counting rank $k$ sublattices with determinant up to $p$ to $O(p^{n-1/7+\epsilon})$, surpassing classical bounds for $\min(k, n-k) \ge 8$.
\end{abstract}
}

\begin{abstract}
We explore the theory of truncations of automorphic forms to study the fine-scale distribution of rational points on Grassmannians. Previous approaches based on variance estimates were hindered by the divergence of the $L^2$-norm of the counting function for sublattices of intermediate rank. We resolve this issue by developing an asymptotic formula for the truncated inner product of pseudo-Eisenstein series $$E_{f}(g) = \sum_{L \subseteq \Z^n \atop \rank k,\ \mathrm{prim.}} f(\det Lg)$$
on $\SL(n,\Z)\backslash\SL(n,\R)$. Our formula applies to both the standard Arthur truncation and the ``harsh'' truncation, which is more directly applicable to the geometry of numbers. Our analysis relies on several technical results of independent interest, including a proof of the uniform moderate growth property for pseudo-Eisenstein series, a comparison of the Arthur and harsh truncations, and a method for resolving singularities in the Maass-Selberg relations. As an application, we significantly sharpen the known error terms for the number of primitive sublattices with determinant up to $p$ of a generic lattice, improving the bound from $O(p^{n-1/\min(k,n-k)})$ to $O(p^{n-1/7+\epsilon})$ when $\min(k, n-k) \ge 8$.
\end{abstract}

\section{Introduction}


\subsection{Automorphic methods in the geometry of numbers}

It may be said that the automorphic approach to the geometry of numbers started with the following celebrated theorem of Siegel \cite{Sie45}. Let $G=\SL(n,\R), \Gamma=\SL(n,\Z)$, and $f:\R^n\rightarrow\R$ be a measurable function. Then
\begin{align} \label{eq:siegel}
\frac{1}{\vol(\Gamma \backslash G)}\int_{\Gamma \backslash G} \sum_{v \in \Z^n \atop v \neq 0} f(vg) dg = \int_{\R^n}f(x)dx.
\end{align} 
Siegel \cite{Sie45} also stated without proof that, for $1 \leq k < n$ and measurable $f:(\R^n)^k\rightarrow\R$,
\begin{align*}
\frac{1}{\vol(\Gamma \backslash G)}\int_{\Gamma \backslash G} \sum_{v_1,\ldots,v_k \in \Z^n \atop \mbox{\tiny lin. indep.}} f(v_1g,\ldots,v_kg) dg = \int_{(\R^n)^k}f(x_1,\ldots,x_k)dx_1\ldots dx_k,
\end{align*}
which was later shown by Rogers \cite{Rog55} and Schmidt \cite{Sch57}. These formulas, and their numerous extensions (e.g., \cite{Vee98, AGH24, GKY22, Kim24}), have been among the most powerful tools in the geometry of numbers to date, as evidenced by the records on sphere packing \cite{Rog47,Ven13, Kla25} and covering \cite{Rog59} by lattices. Moreover, nowadays they have become one of the standard methods in homogeneous dynamics as well, playing roles in a number of important works, such as \cite{EMM98,KM99,AM09,MS10,AM15}.

Recently, the field has seen several more fruitful applications of automorphic methods. For example, the second-named author and Venkatesh \cite{KV18} used the full force of the spectral theory of the $\GL(n)$ Eisenstein series to show that (on the Riemann Hypothesis) all but arbitrarily small percentage of lattices have about the same number of Siegel-reduced bases, a result that helps shape one's understanding of lattice reduction algorithms such as LLL \cite{LLL82}. Viazovska \cite{Via17} and Cohn-Kumar-Miller-Radchenko-Viazovska \cite{CKMRV}, with brilliant uses of the theory of modular forms, found the ``miracle functions'' whose existences were conjectured by Cohn-Elkies \cite{CE03}, proving that the $E_8$ and the Leech lattices provide the best sphere packings in dimensions 8 and 24, respectively. Gargava-Viazovska \cite{GV24} recently gave a new lower bound on the lattice packing densities in infinitely many dimensions, replacing the record of Venkatesh \cite{Ven13}, by adopting part of the techniques of Einsiedler-Lindenstrauss-Michel-Venkatesh \cite{ELMV} as well as the new subconvexity bound due to Petrow-Young \cite{PY23}.

\subsection{This paper}

The purpose of the present paper is to explore yet another aspect of the theory of automorphic forms for its uses in the geometry of numbers: the Arthur truncation and the inner product formula for the truncated Eisenstein series, known as the Maass-Selberg relations. Specifically, our goal is to develop the methods that facilitate their applications, as well as to demonstrate their power, in the field of lattice-point counting.

The very idea of applying the Arthur truncation and the related machinery to the geometry of numbers originates from S.D. Miller, who, in his lecture notes \cite{Mil20}, demonstrated an alternative proof of the Siegel integral formula \eqref{eq:siegel} via the Maass-Selberg relations. The Ph.D. thesis of Thurman \cite{Thu24} generalizes Miller's argument to evaluate the integrals of the form
\begin{align*}
\frac{1}{\vol(\Gamma\backslash G)}\int_{\Gamma\backslash G} \sum_{L_1 \subseteq_{p}\ldots\subseteq_{p}L_k \subseteq_p \Z^n \atop \rk L_j=i_j} f(\det L_1g,\ldots,\det L_k g) dg
\end{align*}
for given $1 \leq i_1 < \ldots < i_k <n$; here, $L\subseteq_p M$ means that $L$ is a primitive sublattice of $M$. For $G=\SL(n,\R)$ and $\Gamma=\SL(n,\Z)$, this was previously announced by \cite[Theorem 5]{Kim22}; but Thurman's formula also covers the case in which $G$ is any Chevalley group. Both \cite{Mil20} and \cite{Thu24} work with the truncated inner product of a ``lattice-point counting function'' --- such as $\sum_{v \in \Z^n} f(vg)$ --- against the constant function $1$. The present paper, on the other hand, is concerned with the product of two nontrivial lattice-point counting functions. More precisely, we are interested in the integrals of the form
\begin{align}\label{eq:intro_goal}
\frac{1}{\vol^C(\Gamma\backslash G)} \int_{\Gamma\backslash G} \Lambda^C\sum_{L_1 \subseteq _p\Z^n \atop \rk L_1=k_1} f_1(\det L_1g) \sum_{L_2 \subseteq_p\Z^n \atop \rk L_2=k_2} f_2(\det L_2g)dg,
\end{align}
where $\Lambda^C$ is the truncation operator (see Sec. \ref{sec:trunc} below), $\vol^C(\Gamma\backslash G)=\int_{\G\backslash G}\Lambda^C1dg$ is the volume of the truncated fundamental domain, $1 \leq k_1,k_2 \leq n$, and $f_1,f_2:\R_{\geq 0} \rightarrow \R$ are smooth functions of compact support.

Truncated versions of the mean value formulas over lattices, such as \eqref{eq:siegel}, are naturally of interest, since they allow us to dismiss from consideration those lattices sitting near the cusp of $\Gamma\backslash G$, which are expected to behave badly in terms of packing and covering. There also exists a more concrete motivation, arising from the fact \cite[Corollary to Theorem 4]{Kim22} that, for $2 \leq k \leq n-2$ and any measurable $f:\R_{\geq 0}\rightarrow\R$ of nonzero mass,
\begin{align*}
\int_{\Gamma\backslash G} \left(\sum_{L \subseteq_p \Z^n \atop \rk L=k} f(\det Lg)\right)^2dg = \infty.
\end{align*}
As explained in \cite{Kim22}, this stymies the attempt to study the metric properties of the rank $k$ sublattices of a random lattice, or equivalently the rational points of the Grassmannian $\mathrm{Gr}(n,k)$ with respect to a random twisted height (cf. \cite{Thu93}), via the standard methods that have been immensely successful in the case $k=1$. Truncation provides an appealing method for circumventing this problem, since it forces the above integral to become finite.

Our main result is the following formula for \eqref{eq:intro_goal}. Here we give a rough version of the actual statement Theorem \ref{thm:res_conc}, in order to quickly present the idea without laying out the technical details.
\begin{theorem}[Theorem \ref{thm:res_conc}, roughly stated] \label{thm:intro_main}
Let $f_1,f_2:\R_{\geq 0} \rightarrow \R$ be smooth functions of compact support.
The truncated inner product \eqref{eq:intro_goal} is equal to
\begin{align*}
Mf_1(n)Mf_2(n)\xi(n,k_1)\xi(n,k_2) + O_{n,f_1,f_2}(T^\kappa),
\end{align*}
where $Mf$ denotes the Mellin transform of $f$,
\begin{align*}
\xi(n,k)=\frac{\xi(2)\cdots\xi(n-k)}{\xi(k+1)\cdots\xi(n)},
\end{align*}
$\kappa < n(n^2-1)/6$ is an explicit constant, and $T\gg 0$ is a parameter that determines the truncated region.\footnote{The higher $T$ is, the less we truncate; see Sec. \ref{sec:trunc}, \ref{sec:maass} for details.}
\end{theorem}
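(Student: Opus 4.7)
The plan is to recast the two lattice sums as pseudo-Eisenstein series attached to the maximal parabolics $P_{k_1}, P_{k_2}$ of $\SL(n,\R)$, decompose them spectrally via Mellin inversion, and then appeal to the Maass-Selberg relations. First I would write
\[
\sum_{L \subseteq_p \Z^n \atop \rk L = k_i} f_i(\det L g) = \frac{1}{2\pi i}\int_{(\sigma_i)} (Mf_i)(s_i)\, E^{(k_i)}(g,s_i)\, ds_i,
\]
where $E^{(k)}(g,s) = \sum_{L \subseteq_p \Z^n,\ \rk L=k}(\det Lg)^{-s}$ is the degenerate Eisenstein series attached to $P_k$, and $\sigma_i$ is large enough for absolute convergence. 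Since $f_i \in C_0^\infty(\R_{\geq 0})$, $Mf_i$ decays faster than any polynomial on vertical strips, which justifies all subsequent interchanges of integration. Inserting these representations into \eqref{eq:intro_goal} converts it into a double contour integral
\[
\frac{1}{\vol^C(\G\backslash G)}\frac{1}{(2\pi i)^2}\int_{(\sigma_1)}\int_{(\sigma_2)} (Mf_1)(s_1)(Mf_2)(s_2)\, \mathcal{K}^C(s_1,s_2)\, ds_1\, ds_2,
\]
where $\mathcal{K}^C(s_1,s_2) := \int_{\G\backslash G} \Lambda^C\bigl(E^{(k_1)}(g,s_1)\, E^{(k_2)}(g,s_2)\bigr)\, dg$.

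Next I would obtain a closed form for $\mathcal{K}^C$ via the Maass-Selberg relation. For $k_1 = k_2$ this is Langlands's classical computation, giving a sum over Weyl elements $w$ of terms of the shape $M(w,s_1)\, T^{ws_1+s_2-n}/(ws_1+s_2-n)$, built from the intertwining operator $M(w,\cdot)$ (a ratio of completed Riemann zeta values) and boundary $T$-powers. For $k_1 \neq k_2$ only Weyl elements genuinely intertwining $P_{k_1}$ with $P_{k_2}$ contribute, the rest being killed by orthogonality across non-associate parabolics, so $\mathcal{K}^C$ reduces to pure boundary terms. The harsh-truncation variant is then handled by the standard comparison identity with Arthur truncation, the difference being a sum of lower-order residual Eisenstein contributions.

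The main term is extracted by shifting both contours from $\sigma_i$ to some $\sigma_i' < n$, past the simple pole of $E^{(k_i)}(\cdot,s_i)$ at $s_i = n$ whose residue is the constant function $\xi(n,k_i)$, the very residue that drives Siegel's formula \eqref{eq:siegel}. Collecting the two residues and dividing by $\vol^C(\G\backslash G)$ yields the asserted main term $(Mf_1)(n)(Mf_2)(n)\xi(n,k_1)\xi(n,k_2)$. The remaining double contour integral along $\sigma_i'$ is the error, which I would bound by combining (a) the Paley-Wiener decay of $Mf_i$ on vertical lines, (b) the explicit $T$-growth of $\mathcal{K}^C$ in terms of $\Re(s_1+s_2)$ and the length of the longest Weyl element of $\SL(n)$, and (c) standard Phragm\'en-Lindel\"of bounds on the intertwining factors $M(w,s)$. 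Optimizing $\sigma_i'$ then delivers the $O(T^\kappa)$ estimate with an explicit $\kappa < n(n^2-1)/6$.

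The main obstacle, I expect, is making $\mathcal{K}^C$ sufficiently explicit to drive the contour shift. For $k_1 \neq k_2$ one must enumerate the Weyl elements of $\SL(n)$ carrying $P_{k_1}$ to $P_{k_2}$ (which exist only under the symmetry $k \leftrightarrow n-k$) and write down the corresponding non-associate intertwining operators with usable analytic estimates. A related technical point: shifting past $s_i = n$ crosses secondary poles of $E^{(k_i)}(g,s_i)$ at $s_i = k_i, k_i+1, \ldots, n-1$, producing residual Eisenstein series on smaller parabolics; one must verify these contribute at most $O(T^\kappa)$ rather than being promoted to the main order, which is what pins down the precise value of $\kappa$.
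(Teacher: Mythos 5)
You have the right skeleton---Mellin-invert the lattice sums into degenerate Eisenstein series, invoke a Maass-Selberg identity, shift contours past the pole at $s_i=n$---but your proposal glosses over the central technical obstacle the paper spends Section~\ref{sec:poles} resolving, and it makes a claim for $k_1\neq k_2$ that would mislead you if pursued.

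The main gap is the singularity of the Maass-Selberg expansion. The paper does not apply a maximal-parabolic Maass-Selberg formula to $E^{(k_i)}$ directly; it first rewrites $E_{P_i}(s_i,g)=E_B(\lambda_i(s_i),g)$ as a \emph{Borel} Eisenstein series (Proposition~\ref{prop:epeb}) and uses the minimal-parabolic Maass-Selberg relation, with the full sum over $W\times W$ and denominator $\prod_{\alpha\in\Delta}\langle\alpha^\vee,w_1\lambda_1+w_2\lambda_2\rangle$. At the points $\lambda_i(s_i)$ needed here, many of these factors vanish identically in $s_1,s_2$, so the summands are individually singular. Your rank-one template $M(w,s_1)\,T^{ws_1+s_2-n}/(ws_1+s_2-n)$ does not see this; on $\SL(n)$ the denominator is a product over several simple roots and can vanish along several hyperplanes at once. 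The paper deals with this by perturbing $\lambda_1\mapsto\tilde\lambda_1=\lambda_1+\varepsilon\rho$, proving a combinatorial cancellation (the $O(z^r)$ estimate of Section~\ref{sec:poles}) showing the singular summands cancel in groups indexed by $\{0,1\}^r$, and only then sending $\varepsilon\to0$. Without that, the ``closed form for $\mathcal K^C$'' you want to shift contours on is not yet a well-defined meromorphic function.

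Two further points. First, your claim that for $k_1\neq k_2$ only Weyl elements intertwining $P_{k_1}$ with $P_{k_2}$ survive, ``the rest being killed by orthogonality across non-associate parabolics,'' does not apply here: both $E^{(k_1)}$ and $E^{(k_2)}$ come from the \emph{same} cuspidal datum (trivial character on the full torus) via the Borel Eisenstein series, so there is no non-associate orthogonality to exploit. The eventual collapse of the $W\times W$ sum comes from zeros of the $c$-function factors $c(\langle\alpha^\vee,\tilde\lambda_i\rangle)$ at those $\lambda_i$, which the paper tracks explicitly in Section~\ref{sec:residue}. Second, the contour shift in the paper only moves $\re s_i$ from slightly above $n$ to $n-\eta_i$ with $\eta_i<1$, so the secondary poles at $s_i=k_i,\ldots,n-1$ you worry about are never crossed; your picture of the contour is misplaced. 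Finally, even justifying the interchange of $\int_{\G\backslash G}$ with the Mellin contour requires the uniform moderate growth of the pseudo-Eisenstein series (Propositions~\ref{prop:uni_gro'} and~\ref{prop:umg}), which is not automatic because $E_f$ is not an automorphic form in the narrow sense; and the harsh--Arthur comparison you call ``standard'' is actually a new pointwise strengthening of Arthur's rapid-decay estimate (Theorem~\ref{thm:truncate}).
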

The main term above is precisely what one would expect, based on the existing literature on the problem of counting sublattices and the rational points of a flag variety, e.g., \cite{Sch68, Thu93, Kim22, Kim23}. For instance, the notation $a(n,d)H^n$ in those works is precisely $Mf(n)\xi(n,k)$ when $f=\mathbf 1_{[0,H]}$, the characteristic function of $[0,H]$.

Of course, for Theorem \ref{thm:intro_main} to be useful, the error term must be manageable. It is somewhat inconvenient to state it precisely in the introduction, but for certain important applications, it takes the form
\begin{align}\label{eq:intro_error}
O_n\left(Mf_1(n)Mf_2(n-\eta)T^{-\frac{\eta k_2(n-k_2)}{2}}\right) + O_n\left(Mf_1(n-\eta')Mf_2(n-\eta)T^\kappa\right)
\end{align}
for $0 < \eta'<\eta < 1$. If $f_1=f_2=\mathbf 1_{[0,H]}$, say, then $Mf_1(s)=Mf_2(s)=H^s/s$, so the error is indeed less than the main term for all sufficiently large $H$.

We also provide a version of the main theorem (Theorem \ref{thm:wanted}) in which the truncation is ``harsh,'' at the cost of an additional error term $O(T^{-p})$ for any choice of $p > 0$. The ``harsh truncation'' of an automorphic form causes it to vanish in the truncated region, which is a desirable trait from the perspective of the geometry of numbers, since it enables one to study the statistical properties of lattices contained in a compact subset of $\G\backslash G$. In fact, we prove the following.
\begin{theorem}[Theorem \ref{thm:truncate}, roughly stated] \label{thm:intro_truncate}
Let $\phi$ be a function on $\G\backslash G$ of uniform moderate growth. Then its harsh and Arthur truncations differ by at most $O(T^{-p})$ pointwise in the truncated region parametrized by $T>0$. They coincide on the rest of $\G\backslash G$.
\end{theorem}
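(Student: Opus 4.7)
The plan is to unfold Arthur's truncation $\Lambda^T \phi$ into a form that exhibits, summand by summand, its closeness to the harsh truncation. Recall that
\begin{align*}
\Lambda^T \phi(g) = \sum_{P} (-1)^{\dim A_P/A_G} \sum_{\delta \in P(\Q)\backslash G(\Q)} \hat\tau_P(H_P(\delta g) - T)\, \phi_P(\delta g),
\end{align*}
where $P$ runs over standard parabolics, $\phi_P$ is the constant term of $\phi$ along $N_P$, and $\hat\tau_P$ is the characteristic function of the obtuse chamber. The Langlands--Arthur combinatorial identity states that, for $T$ suitably regular,
\begin{align*}
F^G(g, T) := \sum_P (-1)^{\dim A_P/A_G} \sum_\delta \hat\tau_P(H_P(\delta g) - T) = \mathbf{1}_{\text{compact}}(g),
\end{align*}
so that the harsh truncation of $\phi$ is precisely $\phi \cdot F^G(\cdot,T)$.

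In the compact region the direct argument works: for $g$ in a Siegel set with $g$ ``low,'' $H_P(\delta g) - T$ fails to lie in the positive cone for every proper $P$ and every relevant $\delta$, so $\hat\tau_P$ vanishes and only the $P = G$ term survives, giving $\Lambda^T \phi(g) = \phi(g)$. This matches the harsh truncation, which equals $\phi(g)$ there by definition, establishing the second assertion of the theorem.

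For $g$ in the truncated region both $F^G(g, T)$ and the harsh truncation vanish. Subtracting the zero quantity $\phi(g) F^G(g,T)$ from $\Lambda^T\phi(g)$, and using $\phi(\delta g) = \phi(g)$ for $\delta \in \G$, yields
\begin{align*}
\Lambda^T\phi(g) = \sum_P (-1)^{\dim A_P/A_G} \sum_\delta \hat\tau_P(H_P(\delta g) - T)\, \big(\phi_P(\delta g) - \phi(\delta g)\big).
\end{align*}
Every summand now features the non-constant part $\phi_P - \phi$ along $N_P$, evaluated at a point forced by $\hat\tau_P$ to lie at a depth at least linear in $T$ into the cusp of $P$. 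A Fourier-series argument on $N_P(\Q) \backslash N_P(\R)$, using the uniform moderate growth of $\phi$ and its derivatives together with repeated integration by parts, yields rapid decay of the non-constant Fourier coefficients and hence a bound $\big|\phi_P(\delta g) - \phi(\delta g)\big| \ll_p T^{-p}$ for any $p > 0$.

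The main obstacle will be obtaining \emph{uniformity}: bounding the number of $\delta \in P(\Q)\backslash G(\Q)$ that actually contribute to the unfolded sum (which, for $g$ in a Siegel set and $T$ regular, is well-known to be bounded, but requires care), and ensuring that the rapid-decay estimate above is uniform in both the parabolic $P$ and the coset representative $\delta$. Once these uniformities are in place, summing the $O_p(T^{-p})$ contributions over the finitely many standard $P$ preserves the bound, completing the proof.
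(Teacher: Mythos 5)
Your unfolding of $\Lambda^C\phi$, the identity $\Lambda^C 1 = F^G(\cdot,C)$, and the algebraic step of subtracting $\phi(g)F^G(g,C)$ to produce the sum of $\hat\tau_P(H_P(\delta g)-C)\bigl(\phi_P(\delta g)-\phi(\delta g)\bigr)$ are all correct, and the intuition that UMG yields rapid decay of $\phi_P-\phi$ deep in the cusp is the right one. But there is a genuine gap at the heart of the estimate: the support of $\hat\tau_P$ is the \emph{obtuse} Weyl chamber in $\ga_P$, which does not force $\delta g$ to be deep in the cusp of $P$. Rapid decay of $\phi_P(y)-\phi(y)$ via the Fourier/UMG argument requires $\alpha(H(y))$ to be large for the simple roots $\alpha\in\Delta-\Delta^P$; in the obtuse chamber some of these can be negative or small even when every $\varpi(H_P(\delta g)-C)$ is large. (Concretely, for $G=\SL(3)$ and $P=B$: in coordinates $(a,b,c)$ with $a+b+c=0$, the obtuse chamber is $\{a>0,\,-c>0\}$ and contains points such as $(1,2,-3)$ where $\alpha_1=a-b=-1<0$.) So the claim that $\hat\tau_P$ forces ``a depth at least linear in $T$ into the cusp of $P$'' is false, and $|\phi_P(\delta g)-\phi(\delta g)|\ll_p T^{-p}$ does not follow summand by summand. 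A secondary issue: the count of contributing $\delta\in\G\cap P\backslash\G$ is not uniformly bounded independent of $g$; it grows like a power of $\|g\|$ (this is part of the content of the paper's Lemma \ref{lemma:umg3}), and absorbing that growth requires more than an $O_p(T^{-p})$ pointwise bound on each summand.

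The paper's proof circumvents this by switching to Arthur's refined reformulation
\[
\Lambda^C\phi(x)=\sum_{P_1\subseteq P_2}\sum_{\delta\in\G\cap P_1\backslash\G}\chi^C_{P_1,P_2}(\delta x)\,\phi_{P_1,P_2}(\delta x),
\qquad \chi^C_{P_1,P_2}=F^{P_1}(\cdot,C)\,\sigma_{P_1}^{P_2}(H_{P_1}(\cdot)-C),
\]
with $\phi_{P_1,P_2}=\sum_{P_1\subseteq P\subseteq P_2}(-1)^{r(P)}\phi_P$. This is not merely a rearrangement; it pairs the analytic gain with the geometric constraint in exactly the right way. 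The alternating sum factors as $\phi_{P_1,P_2}=(-1)^{r(P_2)}\bigl(\prod_{\alpha\in\Delta^{P_2}-\Delta^{P_1}}s_\alpha\bigr)\phi_{P_2}$, and the UMG trick (Lemma \ref{lemma:umg2}) gives a gain $a(g)^{-\mu}$ for any $\mu$ supported on $\Delta^{P_2}-\Delta^{P_1}$ --- precisely those directions. On the other hand, the support of $\chi^C_{P_1,P_2}$ guarantees $\alpha(H(g))\gg\log T$ for all $\alpha\in\Delta^{P_2}-\Delta^{P_1}$ (the inequalities \eqref{eq:bowwow2}), while $F^{P_1}$ bounds the $\Delta^{P_1}$-directions by $O(\log T)$ (Lemma \ref{lemma:umg1}) and Arthur's Lemma 8.3(b) (quoted as Lemma \ref{lemma:umg1'}) bounds the $\ga_{P_2}$-directions by the $\ga_{P_1}^{P_2}$-directions, absorbing the growth from the coset count. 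Your term-by-term estimation of the coarse unfolding skips this pairing and therefore cannot reach the conclusion; supplying it would essentially reproduce the paper's argument.
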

We also show (Proposition \ref{prop:umg}) that functions of the form $\sum_{L \subseteq_p \Z^n} f(\det Lg)$ are indeed of uniform moderate growth; so the above theorem is applicable to those.

Finally, as a demonstration of the power of the automorphic machinery in the geometry of numbers, we showcase an application of our results that extends the famous lattice-point counting discrepancy bound of Schmidt \cite{Sch60} to the problem of counting sublattices.
\begin{theorem}[Theorem \ref{thm:grass} restated]\label{thm:intro_grass}
Fix $\epsilon > 0$ small. Let $h_p:\R_{\geq0}\rightarrow\R$ be the characteristic function of $[0,p]$. Then for almost every $g \in \Gamma\backslash G$ and $p \gg_g 0$,
\begin{align*}
\sum_{L \subseteq_p \Z^n\atop\rk L=k} h_p(\det Lg) - Mh_p(n)\xi(n,k) = O_{n,\epsilon,g}(p^{n-\frac{1}{7}+\epsilon}).
\end{align*}
\end{theorem}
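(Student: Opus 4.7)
The plan is to adapt the classical second-moment variance argument of Schmidt \cite{Sch60} to our setting, using Theorem \ref{thm:intro_main} as the key variance input and Theorem \ref{thm:intro_truncate} to pass from Arthur-truncated information to honest pointwise control. Write $F_p(g) = \sum_{L \subseteq_p \Z^n,\, \rk L = k} h_p(\det Lg)$ for the counting function and $M_p := Mh_p(n)\xi(n,k) = p^n \xi(n,k)/n$ for its predicted main term, so that the discrepancy is $D_p := F_p - M_p$. The goal is to prove $|D_p(g)| \ll_{n,\epsilon,g} p^{n - 1/7 + \epsilon}$ for almost every $g$ and all $p \gg_g 0$.

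The first step is to compute the truncated $L^2$-norm of $D_p$. Expanding $(F_p - M_p)^2 = F_p^2 - 2M_p F_p + M_p^2$ and integrating against $\Lambda^C$, the term $\int_{\G \backslash G} \Lambda^C F_p^2\, dg$ is exactly what Theorem \ref{thm:res_conc} evaluates, taken with $f_1 = f_2 = h_p$ and $k_1 = k_2 = k$; the cross term reduces to the analogous single-factor formula of \cite{Mil20, Thu24}; and the last term contributes $M_p^2 \vol^C(\G \backslash G)$. By construction the predicted main terms cancel, leaving only the errors in \eqref{eq:intro_error}. Using $Mh_p(s) = p^s/s$, these read
\[
p^{2n-\eta}\, T^{-\eta k(n-k)/2} \;+\; p^{2n-\eta-\eta'}\, T^{\kappa}
\]
for $0 < \eta' < \eta < 1$ and the fixed $\kappa < n(n^2-1)/6$. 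Setting $T = p^{\alpha}$ and balancing the two contributions against each other yields a variance bound of the shape $\int_{\G \backslash G} \Lambda^C D_p^2\, dg \ll_n p^{2n - 2/7 - \delta}$ for some $\delta>0$; reaching precisely the exponent $1/7$ stated in the theorem is where the numerical optimization over $(\alpha, \eta, \eta')$ takes place, and this is the principal calculational hurdle.

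With the variance bound in hand, Chebyshev's inequality gives $\vol\{g : |\Lambda^C D_p(g)| > p^{n - 1/7 + \epsilon}\} \ll_n p^{-2\epsilon - \delta}$. I would then run Borel-Cantelli along a geometric sequence $p_j$ chosen fine enough to interpolate between consecutive values without losing the exponent: $F_p$ is nondecreasing in $p$ and $M_p$ varies polynomially, so controlling $D_{p_j}$ and $D_{p_{j+1}}$ controls $D_p$ for $p \in [p_j, p_{j+1}]$ up to an error of size $(p_{j+1} - p_j)\, p_j^{n-1}$, which is acceptable provided $p_{j+1}/p_j - 1$ is chosen of size roughly $p_j^{-6/7 - \epsilon}$. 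This produces the required bound on the Arthur-truncated discrepancy at almost every $g$.

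Finally, Theorem \ref{thm:intro_truncate}, together with Proposition \ref{prop:umg} (which supplies uniform moderate growth of $F_p$), lets us replace the Arthur truncation by the harsh truncation at a negligible cost of $O(T^{-q})$ for any $q > 0$. For any fixed $g \in \G \backslash G$, once $T = p^\alpha$ is large enough, $g$ lies outside the cuspidal region, so the harshly-truncated $F_p$ coincides with $F_p(g)$ itself, and the bound transfers to the untruncated counting function. Beyond the numerical balancing required to hit $1/7$, the other obstacle will be bookkeeping the interpolation: $F_p$ can jump by substantial amounts as $p$ crosses the determinant of a newly admitted primitive sublattice, and the spacing of the $p_j$'s must be tuned carefully so that these jumps are dominated by the target error $p^{n - 1/7 + \epsilon}$.
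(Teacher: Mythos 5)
Your overall architecture — second moment via the truncated inner product formula, a Schmidt/Borel--Cantelli argument along a sequence of scales, and finally removing the truncation using Theorem \ref{thm:intro_truncate} — is the right shape, and it is the skeleton of the paper's proof. But there is a concrete gap that your plan does not address and that turns out to be the source of the specific exponent $1/7$.

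Theorem \ref{thm:res_conc} requires that $f_1,f_2$ be \emph{smooth} functions of compact support, whereas $h_p = \mathbf 1_{[0,p]}$ is not even continuous. You cannot take $f_1=f_2=h_p$ directly. The paper's workaround is to convolve $h$ with a logarithmic bump function $\beta_\delta$ and work with $f_{A,\delta}=h_A * \beta_\delta$; this is not cosmetic. The Mellin transform $M\beta_\delta(s)$ decays like $e^{-\sqrt{\delta\,|\im s|}}$ (Lemma \ref{lemma:sundry}), and making the vertical integrals in \eqref{eq:res-half} and \eqref{eq:res-remain} converge introduces a loss of order $\delta^{-\alpha}$ with $\alpha$ slightly above $5/2$. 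Proposition \ref{prop:prec_err_est} records the resulting variance as $O(\delta^{-\alpha} Mh_A(n)Mh_A(n-\eta_2))$, which after Schmidt's dyadic Borel--Cantelli scheme (with the non-geometric sequence $p_{j+1}^{2n-\eta}-p_j^{2n-\eta}=\mathrm{const}$ rather than a plain geometric one) gives the smoothed discrepancy bound $O(\delta^{-\alpha} p^{\,n-1/2+\epsilon_0})$. Undoing the smoothing — sandwiching $E_{h_p}$ between $E_{f_{pe^{-\delta},\delta}}$ and $E_{f_{pe^{\delta},\delta}}$ — costs an additional $\delta p^n$. The exponent $1/7$ is obtained by optimizing $\delta p^n$ against $\delta^{-5/2}p^{\,n-1/2}$, forcing $\delta = p^{-1/7}$. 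In other words, $1/7 = \frac{1}{2(1+\alpha)}$ with $\alpha\to 5/2$ is \emph{the cost of smoothing}, and an argument that never smooths cannot produce it.

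Your proposed alternative route — fixing no $\delta$ and instead balancing $T=p^\alpha$ to get a variance $\ll p^{2n-2/7-\delta}$ — does not match the structure of the error: the $T$-power terms in \eqref{eq:intro_error} are merely lower-order corrections to the variance, not the dominant contribution, and the paper keeps $T$ \emph{fixed} rather than letting it depend on $p$ (letting $T$ grow with $p$ would also change the measure space for the Borel--Cantelli step on each scale, which your sketch does not address). Even if the Mellin formalism were somehow applied directly to $h_p$, the variance without a smoothing penalty would give a discrepancy $p^{n-1/2+\epsilon}$, not $p^{n-1/7}$; the fact that the theorem only claims $1/7$ is itself the signature of the smoothing loss you have omitted. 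Finally, a small bookkeeping caveat: you expand $\Lambda^C(F_p-M_p)^2$, but Theorem \ref{thm:res_conc} concerns $\int \Lambda^C E_{f_1}\cdot E_{f_2}$, not $\int\Lambda^C(E_{f_1}E_{f_2})$; the paper goes through $|\mathrm{H}^C E_f|^2 \le (\Lambda^C E_f)^2$ pointwise together with self-adjointness and idempotence of $\Lambda^C$ to make this rigorous, and your sketch should say how you handle that step.
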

For $\min(k,n-k)\geq8$, this error term is substantially lower than the currently known estimates (e.g. \cite{Sch68, Thu93, Kim23})
\begin{align*}
\sum_{L \subseteq_p \Z^n\atop\rk L=k} h_p(\det Lg) - Mh_p(n)\xi(n,k) = O_{n,g}\left(p^{n-\max(\frac{1}{k},\frac{1}{n-k})}\right),
\end{align*}
as well as the conjecture of Franke-Manin-Tschinkel \cite[Corollary to Theorem 5]{FMT}, which is somewhat better than the above but still diminishes as $\min(k,n-k)$ grows.

Our work leaves much room for further developments. In particular, it seems that our techniques can be extended to generalized flag varieties, which we hope to return to in near future. It is also likely that certain parts of our analyses could be improved for quantitatively tighter results. Looking for applications in which truncation plays an essential role --- such as in Theorem \ref{thm:intro_grass} above --- would of course be especially interesting. To that end, it may be worth trying to strengthen our formula so that it takes into account the ``shapes'' of sublattices as well.

\subsection{A few words about the proof}

Let us briefly discuss the main technical challenges faced and resolved in the course of our proof of Theorem \ref{thm:intro_main}.

\subsubsection*{UMG property of pseudo-Eisenstein series}

It is well-known (see e.g., \cite[I.2]{MW95}) that an automorphic form, in the strict sense of the word (see \cite[I.2.17]{MW95}), is of uniform moderate growth (UMG), from which it can be shown that its Arthur truncation is of rapid decay. In particular, this applies to every Eisenstein series. However, a pseudo-Eisenstein series such as $\sum_{L \subseteq_p \Z^n} f(\det Lg)$ is not an automorphic form in the narrow sense; rather, it is a direct integral of a certain family of Eisenstein series. To show that it also has the UMG property, one needs to \emph{uniformly} bound the rate of growth of all Eisenstein series in the family. Curiously, it appears that this natural problem has not been considered previously, at least in writing. We supply the argument for what we need below (Propositions \ref{prop:uni_gro'} and \ref{prop:umg}), provided $f$ is smooth.


\subsubsection*{Harsh versus Arthur truncations}

As discussed earlier, from the geometry of numbers perspective, it is expedient for the truncations of lattice-point counting functions to have a property strictly stronger than rapid decay, namely that it is arbitrarily small everywhere in the truncated region. To achieve this, we refine the argument of Arthur \cite[Proposition 13.2]{Art05} by further exploiting his theory of truncation as well as the ``UMG trick'' of Harish-Chandra \cite{Har76}. As a result, we obtain Theorem \ref{thm:intro_truncate}, and the harsh-truncated version of Theorem \ref{thm:intro_main}.

\subsubsection*{Singularities in the Maass-Selberg relation}

The Maass-Selberg relation for the Borel Eisenstein series
\begin{align*}
\int_{\Gamma \backslash G} \Lambda^CE_B(\lambda_1,g) &\Lambda^CE_B(\lambda_2,g) dg \\
&= \sum_{w_1,w_2 \in W} \frac{e^{\langle C, w_1\lambda_1+w_2\lambda_2\rangle}}{\prod_{\alpha \in \Delta}\langle \alpha^\vee, w_1\lambda_1+w_2\lambda_2\rangle}M(w_1,\lambda_1)M(w_2,\lambda_2),
\end{align*}
(see Section 2 for the notations) is highly singular in general, e.g., when $\lambda_1=\lambda_2$, as can also be seen in the simpler formula for $\SL(2,\R)$.  It is expected that most singularities on the right cancel with one another, and the genuine singularities, if any, only arise from the intertwining operator $M(w,\lambda)$. However, to the best of our knowledge, no concrete computations have been carried out in this regard, except perhaps for groups of low rank. We are able to resolve this problem for the cases needed by our proof of Theorem \ref{thm:intro_main}, by first perturbing one of the $\lambda_i$'s to separate the singularities, and then resolving each of them via a combinatorial argument. It seems plausible that our method may extend to any pair of Borel Eisenstein series.

\subsection{Organization}

Section 2 lays out the background knowledge and the notations used throughout the paper. In Section 3, we show that the pseudo-Eisenstein series of our interest are of uniform moderate growth, and study the decay properties of the Arthur truncation, proving Theorem \ref{thm:intro_truncate}. In Section 4, we prove Theorem \ref{thm:L1-norm}, a truncated integral formula for a pseudo-Eisenstein series. It serves as a slight refinement of Miller \cite{Mil20} and parts of Thurman \cite{Thu24}, as well as a gentle introduction to the more complicated proof of Theorem \ref{thm:intro_main}, which is handled in Section 5. The singularity removal is delegated to Section 6. Finally, Section 7 proves Theorem \ref{thm:intro_grass}.

\subsection{Acknowledgments}

The series of works \cite{Kim22,Kim23} leading to the present paper really started when Phong Q. Nguyen asked the second-named author a question back in 2019 concerning the distribution of the smallest rank $k$ sublattice of a random lattice, with applications to lattice-based cryptography in mind --- despite the progress made, it is yet to be answered in full. In the technical aspect, we are heavily indebted to Stephen D. Miller; we are especially grateful to him for sharing his idea and an early version of the notes \cite{Mil20}, as well as lengthy discussions on the uniform moderate growth and rapid decay properties, including the reference to \cite{GMP17}. Sihun Jo helped us greatly with the saddle point computation in the appendix. We also thank Yeansu Kim, Min Lee, and Forrest Thurman for helpful discussions and suggestions.

Both authors acknowledge support by the NRF grant funded by the Korea government MSIT No. RS-2023-00253814. In addition, SK was also supported by NSF CNS-2034176.

\section{Preliminaries}

We recall the basic facts and set the notations to be used throughout the paper. In order to make the paper as accessible as possible, we will try to define all the notions specifically and explicitly for $\SL(n)$, and suppress the generalities whenever possible, even if it somewhat misrepresents the elegance of the general theory.

Garrett \cite{Gar18} is a good reference for most of the background knowledge needed for our paper. The notes by Miller \cite{Mil20} provides an accessible account for the Maass-Selberg formula and its application to the geometry of numbers.
More detailed accounts can be found in, for instance, Arthur \cite{Art05}, Lapid and Ouellette \cite{LO12}, and Moeglin and Waldspurger \cite{MW95}.

\subsection{Basic definitions}

We write $G = \SL(n,\R)$ and $\Gamma = \SL(n,\Z)$ throughout this paper. $G$ and $\Gamma \backslash G$ are endowed the right $G$-invariant measure $dg$, normalized so that
\begin{equation*}
\mathrm{vol}(\Gamma \backslash G) = \int_{\Gamma \backslash G} dg = \xi(2) \cdots \xi(n),
\end{equation*}
where $\xi(s) = \pi^{-s/2}\Gamma(s/2)\zeta(s)$.

Let $\Lg$ be the Lie algebra of $G$, and $U(\Lg)$ be its universal enveloping algebra. For $X \in \Lg$ an appropriate function $f$ on $G$, write
\begin{equation*}
Xf(g) = \frac{d}{dt}\bigg\rvert_{t=0} f(ge^{tX}).
\end{equation*}

Let $B=P_0$ be the \emph{standard minimal parabolic} or the \emph{Borel subgroup} of $G$, that is, the group consisting of the upper-triangular matrices in $G$. $P_0$ is characterized by the property that it is the stabilizer in $G$ of the flag $0 = E_0 \subseteq E_1 \subseteq E_2 \subseteq \ldots \subseteq E_n = \R^n$ under the natural action of $G$ on $\R^n$ (considered as row vectors) by right multiplication, where $E_k = \spn(\ve_{n-k+1},\ldots,\ve_n)$ and $\ve_i$ is the vector whose $i$-th entry is $1$ and the rest are zero.

A \emph{standard parabolic subgroup} (or a \emph{parabolic} for short) $P$ is by definition the stabilizer in $G$ of a flag of the form $0 \subseteq E_{i_1} \subseteq \ldots \subseteq E_{i_k} \subseteq \R^n$, $0 = i_0 < i_1 < \ldots < i_k < n$; we write $k = r(P)$. Each such $P=P(i_1,\dots, i_k)$ admits the \emph{Levi decomposition} $P = M_PN_P$, where
\begin{equation*}
M_{P} =
\begin{pmatrix}
\GL(n-i_k,\R) &  &  \\
  & \ddots &  \\
  &  & \GL(i_1-i_0,\R) \end{pmatrix} \cap G,\ 
N_{P} = \begin{pmatrix}
\Id_{n-i_k} & * & * \\
  & \ddots & * \\
  & & \Id_{i_1-i_0} \end{pmatrix}.
\end{equation*}

\subsection{The spaces $\ga_{P_1}^{P_2}$ and $(\ga_{P_1}^{P_2})^*$} \label{sec:spaces}

Let
\begin{equation*}
\ga := \ga_0 := \Big\{(a_1,\ldots,a_n) \in \R^n : \sum_i a_i = 0\Big\}.
\end{equation*}
This may be naturally identified with the standard choice of a Cartan subalgebra of $\Lg$.
Let $A \subseteq G$ be the subgroup consisting of the diagonal matrices with positive real entries. Then there is a map $\exp: \ga \rightarrow A$ defined by
\begin{equation*}
\exp(a) = \diag(e^{a_1}, \ldots, e^{a_n}).
\end{equation*}
For every \(g \in G\), we can factor \(g\) in the so-called ``NAK decomposition'' as
\[
  g \;=\; nak.
\] We write \(a(g)\) to denote the \(A\)-part of \(g\) in this decomposition, and define \(H(g) \in \ga\) to be the unique element satisfying
\[
  \exp\bigl(H(g)\bigr) \;=\; a(g).
\]

We next consider the dual space $\ga^* = \ga_0^*$ of $\ga$, i.e., the space of linear functionals $\ga \rightarrow \R$. It can be naturally identified with the quotient
\begin{equation*}
\R^n / \spn\{(1,\ldots,1)\}.
\end{equation*}
For $g \in G$ and $\lambda \in \ga^*$, we write $g^\lambda := \exp(\lambda(H(g)))$. We also write $\ga^*_\C = \ga^* \otimes \C$ for short.

More generally, for a parabolic $P=P(i_1,\dots, i_k)$, we define
\begin{align*}
\ga_P = \{(a_1,\ldots,a_n) \in \ga : a_1 = \ldots = a_{n-i_k}, a_{n-i_k+1} = \ldots = a_{n-i_{k-1}}, \ldots, \\
\ldots, a_{n-i_1+1} = \ldots = a_{n}\},
\end{align*}
and write $(\ga_P)^* =\ga_P^* $ for the dual of $\ga_P$. In particular, for $P = P_0$, $\ga_{P_0} = \ga_0$ and $\ga_{P_0}^* = \ga_0^*$.

For $g \in G$, the Levi decomposition with respect to $P$ admits an expression $g=mna_Pk$, with $m \in M_P$ such that the determinant of each block of $m$ is $\pm 1$, $n \in N_P$, $a_P \in \exp(\mathfrak a_P)$, and $k \in \mathrm{SO}(n,\R)$. Moreover, the choice of $a_P$ is unique. Therefore, we may define $H_P(g) \in \mathfrak a_P$ to be the element so that $\exp(H_P(g)) = a_P(g)$.

For parabolics $P_1(i_1,\dots, i_k) \subseteq P_2(j_1,\dots, j_\ell)$, i.e., $\{j_1,\dots, j_\ell\}\subseteq \{i_1,\dots, i_k\}$, there exist natural injections
\begin{equation*}
\ga_{P_2} \hookrightarrow \ga_{P_1},\ \ \ga_{P_2}^* \hookrightarrow \ga_{P_1}^*, 
\end{equation*}
and the corresponding dual surjections
\begin{equation*}
\ga_{P_1}^* \twoheadrightarrow \ga_{P_2}^*,\ \ \ga_{P_1} \twoheadrightarrow \ga_{P_2}.
\end{equation*}
These maps together yield split exact sequences, by which we define $\ga_{P_1}^{P_2}$:
\begin{align*}
0 \rightarrow \ga_{P_1}^{P_2} \rightarrow &\ga_{P_1} \rightleftarrows \ga_{P_2} \rightarrow 0, \\
0 \rightarrow \ga_{P_2}^* \rightleftarrows &\ga_{P_1}^* \rightarrow (\ga_{P_1}^{P_2})^* \rightarrow 0.
\end{align*}

 Note that $\ga_{P}^{G}=\ga_{P}$. In concrete terms, $\ga_{P_1}^{P_2}$ can be identified with the set of $a\in\ga_{P_1}$ such that for each block in $P_2$ that is the union of some blocks in $P_1$, the sum of those block-values is $0$.
From the split exact sequences, we have
\begin{equation*}
\ga_{P_1} = \ga_{P_1}^{P_2}\oplus\ga_{P_2}  , \ga_{P_1}^* = (\ga_{P_1}^{P_2})^*\oplus \ga_{P_2}^* ,
\end{equation*}
and in general, for $Q_1 \subseteq P_1 \subseteq P_2 \subseteq Q_2$,
\begin{equation} \label{eq:a_decomp}
\ga_{Q_1}^{Q_2} = \ga_{Q_1}^{P_1} \oplus \ga_{P_1}^{P_2} \oplus \ga_{P_2}^{Q_2},
\end{equation}
and likewise for $(\ga_{Q_1}^{Q_2})^*$.
 These decompositions are orthogonal under the standard inner product; this follows from the fact that one summand is  constant on each larger block, whereas the other summand has those same blocks summing to zero, so their dot product must vanish.

Throughout this paper, we adopt the following notational conventions. For $x \in \ga_{Q_1}^{Q_2}$, we write $x_{P_1}^{P_2}$ for the component of $x$ in $\ga_{P_1}^{P_2}$ with respect to the direct sum \eqref{eq:a_decomp}. If in addition $F$ is a function on $\ga_{P_1}^{P_2}$, we often write $F(x)$ to mean $F(x_{P_1}^{P_2})$.
When $P_1=P_0$ is the minimal parabolic, we either omit the subscripts or place $0$ instead, for example, $\ga_{P_0}^P=\ga^P$ and when $P_2=G$, we omit the superscript. For example, $\ga = \ga_{0}^G = \ga_{P_0}^G$. We extend these conventions to the dual spaces $(\ga_{P_1}^{P_2})^*$ as well.

In addition, there exists the \emph{Weyl group} $W$, which in our situation is isomorphic to the symmetric group on $n$ elements, that acts on $\ga$ by permuting its entries. More precisely, for $w \in W$ and $a \in \ga$, $(wa)_i = a_{w(i)}$; note that this is a right group action. $W$ also acts on $\ga^*$ as well: 
for $\lambda \in \ga^*$, $(w\lambda)_{w(i)} = \lambda_{i}$; this is a left group action. Observe that $w\lambda(a) = \lambda(wa)$. The notion of the Weyl group generalizes to $\ga_{P_1}^{P_2}$ and so on, but it is not needed for the present paper.

\subsection{Roots, coroots, and weights}

$\ga^*$ possesses a few distinguished elements that play an important role in Lie theory. A \emph{root} is an element of the form $\alpha_{ij} \in \ga^*$, $1 \leq i \neq j \leq n$, defined by $\alpha_{ij}(a) = a_i-a_j$ for $a \in \ga$. $\alpha_{ij}$ is said to be \emph{positive} if $i < j$, and \emph{negative} if $i > j$. The \emph{Weyl element} $\rho=\frac{1}{2}\sum_{\alpha_{ij}>0}\alpha_{ij} =\frac{1}{2}(n-1, n-3,\ldots, -n+1)\equiv (n,n-1,\ldots,1)$ is defined to be half the sum of all positive roots.


For $1 \leq i \leq n-1$, a root of the form $\alpha_{i,i+1}$ is said to be \emph{simple}, which we write $\alpha_i$ for short. We denote by $\Delta = \{\alpha_1, \ldots, \alpha_{n-1}\}$ the set of all simple roots; $\Delta$ forms a basis of $\ga^*$. We endow $\ga^*$ with an inner product defined by $\langle \lambda, \alpha_{ij} \rangle = \lambda_i - \lambda_j$. Note that the action of $W$ is unitary with respect to this inner product.

We define the \emph{coroot} $\alpha_i^\vee \in \ga$ of $\alpha_i \in \ga^*$ to be the $n$-vector with $1$ in its $i$-th entry, $-1$ in the $(i+1)$-st, and $0$ elsewhere. We write $\Delta^\vee = \{\alpha_1^\vee, \ldots, \alpha_{n-1}^\vee\}$; $\Delta^\vee$ forms a basis of $\ga$. Let $\widehat\Delta = \{\varpi_1,\ldots,\varpi_{n-1}\} \subseteq \ga^*$ be the dual basis of $\Delta^\vee$; the elements $\varpi_i (= \varpi_{\alpha_i})$ are in fact $\sum_{k=1}^i\e_k$, and are referred to as \emph{fundamental weights}. 
Analogously, let $\widehat\Delta^\vee = \{\varpi_1^\vee,\ldots,\varpi_{n-1}^\vee\} \subseteq \ga$ be the dual basis of $\Delta$; here $\varpi_i^\vee=\sum_{k=1}^i\e_k-\frac{i}{n}\sum_{k=1}^n\e_k$. This is the set of the \emph{fundamental coweights}.

Fix a standard parabolic $P$ stabilizing a flag $0 \subsetneq E_{i_1} \subsetneq \ldots \subsetneq E_{i_k} \subsetneq \R^n$. We define
\begin{align*}
\Delta^P = \{\alpha_i : i \neq i_j \mbox{ for any $j=1,\ldots,k$}\},
\end{align*}
a basis of $(\ga^P)^*$, and also define
\begin{align*}
\Delta_P &= \{\alpha \Big|_{\ga_{P}} : \alpha \in \Delta - \Delta^P\}, \\
\widehat\Delta_P &= \{\varpi_\alpha \Big|_{\ga_{P}} : \alpha \in \Delta - \Delta^P\},
\end{align*}
which are bases of $(\ga_{P})^*$. 
These have dual bases
\begin{align*}
\widehat\Delta_P^\vee &= \{\varpi_\alpha^\vee : \alpha \in \Delta_P\}, \\
\Delta_P^\vee &= \{\alpha^\vee : \alpha \in \Delta_P\}
\end{align*}
of $\ga_P$, respectively.

In general, for parabolics $P_1 \subseteq P_2$, we define
\begin{align*}
\Delta_{P_1}^{P_2} &= \{\alpha \Big|_{\ga_{P_1}^{P_2}} : \alpha \in \Delta^{P_2} - \Delta^{P_1}\}, \\
\widehat\Delta_{P_1}^{P_2} &= \{\varpi \Big|_{\ga_{P_1}^{P_2}} : \varpi \in \widehat\Delta_{P_1} - \widehat\Delta_{P_2}\},
\end{align*}
both bases of $(\ga_{P_1}^{P_2})^*$, and we define $(\widehat\Delta_{P_1}^{P_2})^\vee, (\Delta_{P_1}^{P_2})^\vee \subseteq \ga_{P_1}^{P_2}$ to be their dual bases, respectively.


\subsection{The pseudo-Eisenstein series $E_f$}

From now on, fix $k$ and the corresponding maximal parabolic $P = P(k)$ stabilizing the flag $0 \subseteq E_k \subseteq \R^n$. The right action of $G$ on $\R^n$ extend naturally to $\wedge^k\R^n$. Let $\ve= \ve_{n-k+1} \wedge \ldots \wedge \ve_n \in \wedge^k\R^n$.

To a function $f: \R_{\geq 0} \rightarrow \R$ for which the Mellin transform exists and the Mellin inversion formula applies,
we associate a pseudo-Eisenstein series
\begin{equation}\label{eq:pseisen1}
E_{P,f}(g) = \sum_{\gamma \in \Gamma \cap P \backslash \Gamma} f(\|\ve\gamma g\|).
\end{equation}
If $P$ is clear from the context, we write $E_{P,f} = E_f$.
Using the Mellin inversion formula
\begin{equation*}
f(x) = \frac{1}{2\pi i} \int_{\re s = c} Mf(s)x^{-s}ds
\end{equation*}
for any appropriate $c$, we can rewrite \eqref{eq:pseisen1} as
\begin{equation} \label{eq:pseisen2}
E_f(g) = \frac{1}{2\pi i} \int_{\re s = c} Mf(s) E_P(s,g) ds,
\end{equation}
where
\begin{equation} \label{eq:E_p}
E_P(s,g) = \sum_{\gamma \in \Gamma \cap P \backslash \Gamma} \|\ve\gamma g\|^{-s}.
\end{equation}
By e.g., \cite[Lemma 4.1]{Kim23}, $E_P(s,g)$ converges absolutely whenever $\re s > n$. $E_P(s,g)$ in fact admits meromorphic continuation to $\C$, by Proposition \ref{prop:epeb} below.

\ignore{
Of particular interest is the case is where we take $f$ to be
\begin{equation*}
f_H(x) = \begin{cases} 1 & \mbox{if $x \leq H$} \\ 0 & \mbox{otherwise}\end{cases}
\end{equation*}
for some $H \in \R^+$. In this case, we have
\begin{equation*}
Mf_H(s) = \frac{H^s}{s}
\end{equation*}
for $\re s > 0$.
}

\subsection{Borel Eisenstein series}

For $\lambda = (\lambda_1, \ldots, \lambda_n) \in \ga^*_\C$ in the \emph{Godement range} of absolute convergence i.e. satisfying $\alpha^\vee_i(\re \lambda) > 1$ for all $1 \leq i < n$, the Borel Eisenstein series $E_B(\lambda,g)$ is defined to be
\begin{equation*}
E_B(\lambda, g) = \sum_{\gamma \in \Gamma \cap B \backslash \Gamma} a(\gamma g)^{\lambda + \rho}.
\end{equation*}
The right-hand side converges absolutely in the Godement range.
The theory of Eisenstein series shows that $E_B$ admits meromorphic continuation to $\ga_\C$, with finitely many hyperplane singularities. In particular, it can be shown by the Langlands constant term formula that $E_B(-\rho, g) = 1$.

\begin{proposition}\label{prop:epeb}
For $\re s > n$, the equality $E_P(s,g) = E_B(\lambda,g)$ holds with the choice
\begin{equation} \label{eq:lambda}
\lambda = \lambda(s) \equiv (-n, -n+1, \ldots, -k-1; -s-k, -s-k+1, \ldots, -s-1).
\end{equation}
\end{proposition}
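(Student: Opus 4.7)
The plan is to reduce the desired identity to the formula $E_B(-\rho, g) = 1$ mentioned just above the proposition, via Langlands's induction in stages for Eisenstein series. As a preparation, I would compute $\lambda + \rho$ explicitly. With the representative $\rho \equiv (n, n-1, \ldots, 1)$, term-wise addition immediately gives
\begin{equation*}
\lambda + \rho \;=\; (0, \ldots, 0;\; -s, \ldots, -s),
\end{equation*}
where the last $k$ entries are $-s$. Thus $\lambda + \rho \in \ga_P^*$, and the complementary projection $\lambda^P := \lambda|_{(\ga^P)^*}$ works out to be $-\rho_{M_P}$, where $\rho_{M_P}$ is the half-sum of the positive roots of the Levi $M_P \cong \mathrm{S}(\GL(n-k) \times \GL(k))$: indeed, within each block, $\lambda$ is an arithmetic progression with step $1$, which (after stripping the block-constant part that lives in $\ga_P^*$) matches $-\rho_{M_P}$ on the nose.

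Next I would establish the pointwise identity $\|\ve g\|^{-s} = a(g)^{\lambda + \rho}$ for every $g \in G$. Writing $g = nak$ in the NAK decomposition with $H(g) = (h_1, \ldots, h_n)$, the key observation is that $\ve n = \ve$: since $n$ is upper-unipotent, $\ve_i n = \ve_i + \sum_{j > i} c_{ij}\,\ve_j$, and when wedging rows $i = n-k+1, \ldots, n$, every non-diagonal contribution introduces a repeated basis vector and vanishes. Then $\ve n a = e^{h_{n-k+1} + \cdots + h_n}\,\ve$, and $\|\ve k\| = \|\ve\| = 1$ as $k \in K$ acts isometrically on $\wedge^k \R^n$. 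Combining, $\|\ve g\|^{-s} = e^{-s(h_{n-k+1} + \cdots + h_n)} = a(g)^{\lambda + \rho}$, and summing over $\gamma \in \Gamma \cap P \backslash \Gamma$ in the convergence range $\re s > n$ yields $E_P(s, g) = \sum a(\gamma g)^{\lambda + \rho}$.

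Finally, I would identify this sum with $E_B(\lambda, g)$ via the induction-in-stages formula (see \cite{MW95}): decomposing $\lambda = \lambda_P + \lambda^P$,
\begin{equation*}
E_B(\lambda, g) \;=\; \sum_{\gamma \in \Gamma \cap P \backslash \Gamma} E^{M_P}_{B_{M_P}}(\lambda^P,\, m_P(\gamma g)) \cdot a_P(\gamma g)^{\lambda_P + \rho_P},
\end{equation*}
where $m_P(\cdot)$ denotes the Levi component in the $P$-Iwasawa decomposition. Since $\lambda^P = -\rho_{M_P}$ by the first step, the inner Levi Borel Eisenstein series collapses to the constant $1$ by applying the cited identity $E_B(-\rho) = 1$ on each $\GL$-block of $M_P$; and because $\lambda + \rho = \lambda_P + \rho_P$ lies in $\ga_P^*$, $a_P(\gamma g)^{\lambda_P + \rho_P} = a(\gamma g)^{\lambda + \rho} = \|\ve \gamma g\|^{-s}$. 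Putting these together gives $E_B(\lambda, g) = E_P(s, g)$. The main technical obstacle is justifying the unfolding: our $\lambda(s)$ is \emph{never} in the Godement convergence range of $E_B$ (one checks $\alpha_i^\vee(\lambda) = -1$ for $i \neq n-k$), so the identity has to be interpreted through Langlands's meromorphic continuation. I would handle this by first establishing the unfolding at a generic perturbation $\lambda + \epsilon$ inside the Godement range, where it is an absolutely convergent Fubini-type rearrangement, and then specializing to $\lambda(s)$ by meromorphic continuation in $\lambda$; the inner factor $E^{M_P}_{B_{M_P}}$ is holomorphic at $-\rho_{M_P}$ with value $1$, so no residue bookkeeping is required.
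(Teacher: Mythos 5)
Your proof is correct and follows essentially the same approach as the paper's: unfolding the Borel Eisenstein series in stages through the maximal parabolic $P$, recognizing the inner sum as a product of Levi Borel Eisenstein series, and observing that at the specific $\lambda(s)$ those collapse to $1$ because the $\ga^P$-component of $\lambda(s)$ is $-\rho_{M_P}$. Your version is if anything a bit cleaner — you compute $\lambda+\rho \in \ga_P^*$ directly and verify $\|\ve g\|^{-s} = a(g)^{\lambda+\rho}$ pointwise, whereas the paper carries the auxiliary shift $\aleph$ and leaves $\lambda^{(k)}, \lambda^{(-k)}$ free until the final specialization — and you are more explicit than the paper about the fact that $\lambda(s)$ lies outside the Godement range, so the unfolding must be justified through meromorphic continuation from a genuinely convergent point.
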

\begin{proof}

For $\lambda\equiv(\lambda_1,\ldots,\lambda_n)$ in the Godement range, we have
\[ E_B(\lambda, g) = \sum_{\gamma \in \G \cap P \backslash \G} \sum_{\gamma' \in \G \cap B \backslash \G \cap P} a(\gamma'\gamma g)^{\lambda+\rho}.\]
Write $a^{(k)} = a_1 \cdots a_k, a^{(n-k)} = a_{k+1} \cdots a_n$. Let $\lambda^{(k)} \equiv (\lambda^{(k)}_1,\ldots,\lambda^{(k)}_k,0,\ldots,0)$ and $\rho^{(k)} \equiv (k,k-1,\ldots,1,0,\ldots,0)$. Similarly, let $\lambda^{(-k)} \equiv (0,\ldots,0,\lambda^{(-k)}_{k+1},\ldots,\lambda^{(-k)}_n)$ and $\rho^{(-k)} \equiv (0,\ldots,0,n-k,\ldots,1)$.
Then we can rewrite
\begin{equation} \label{eq:ac}
E_B(\lambda,g) = \sum_{\gamma \in \G \cap P \backslash \G} \sum_{\gamma' \in \G \cap B \backslash \G \cap P} a^{(k)}(\gamma'\gamma g)^{\lambda^{(k)} + \rho^{(k)}} a^{(n-k)}(\gamma' \gamma g)^{\lambda^{(-k)} + \rho^{(-k)}} a(\gamma' \gamma g)^{\aleph}
\end{equation}
for $\aleph = (\lambda + \rho) - (\lambda^{(k)} + \rho^{(k)}) - (\lambda^{(-k)} + \rho^{(-k)})$.

Choose $\lambda, \lambda^{(k)}, \lambda^{(-k)}$ so that the following conditions are satisfied: (i) $\lambda$, $\lambda^{(k)}$ and $\lambda^{(-k)}$ are in the Godement range of the absolute convergence of $E_B$, $E_{B}^{(k)}$ and $E_B^{(n-k)}$, the Borel Eisenstein series in $\SL(n)$, $\SL(k)$ and $\SL(n-k)$, respectively (ii) $\aleph \equiv (0, \ldots, 0, -s, \ldots, -s)$. It is easy to see that such choices can indeed be made for $\re s > n$. Then the inner sum over $\gamma'$ in \eqref{eq:ac} is equal to
\begin{equation*}
a(\gamma g)^\aleph E_{B}^{(k)}(\lambda_k, \gamma g) E_{B}^{(n-k)}(\lambda_{n-k}, \gamma g),
\end{equation*}
since $a(\gamma' \gamma g)^\aleph = a(\gamma g)^\aleph$. Invoking the meromorphic continuation of the Eisenstein series, and setting $\lambda^{(k)} = -\rho_k$ and $\lambda^{(-k)} = -\rho_{n-k}$, this is equal to $a(\gamma g)^\aleph = \|\ve\gamma g\|^{-s}$, and thus \eqref{eq:ac} coincides with $E_P(s,g)$.
\end{proof}

\subsection{Truncation} \label{sec:trunc}

For a locally bounded function $\phi$ on $\Gamma \backslash G$ and a standard parabolic $P$, we define $\phi_P$, the \emph{constant term} of $\phi$ along $P$ by
\begin{equation*}
\phi_P(g) = \int_{\Gamma \cap {N_P} \backslash N_P} \phi(ng)dn
\end{equation*}
where $dn$ is the Haar measure on $N_P$ with unit volume on $\Gamma \cap {N_P} \backslash N_P$. Then for sufficiently regular $C \in \ga$, i.e. $\alpha_i(C) \gg 0$ for all $1 \leq i < n$, the \emph{Arthur truncation} $\Lambda^C\phi$ of $\phi$ is given by
\begin{equation} \label{eq:arthur_t}
\Lambda^C\phi(g) = \sum_{P \supseteq P_0} (-1)^{r(P)} \sum_{\delta \in \Gamma \cap P \backslash \Gamma}\phi_P(\delta g)\widehat\tau_P(H_P(\delta g) - C),
\end{equation}
where $\widehat\tau_P: \ga_P \rightarrow \C$ is the characteristic function of the obtuse Weyl chamber
\begin{equation} \label{eq:obt_Weyl}
\{t \in \ga_P: \varpi(t) \geq 0 \mbox{ for all } \varpi \in \widehat\Delta_P\}.
\end{equation}
$\Lambda^C$ is a self-adjoint and idempotent operator on the space of functions on $\Gamma \backslash G$. 

Our later argument necessitates a deeper understanding of the Arthur truncation. Let us recall the parts of the theory that are relevant to us.

Following Arthur \cite[Sec. 8]{Art05}, for a standard parabolic $P$ and elements $C_1^P, C^P$ of $\ga_0^P$, and a compact subset $\omega$ of $N_B$, define
\begin{align*}
\mathfrak S^P(C_1^P) &= \{g = nak: n \in \omega, a \in \exp(\ga_0), k \in \mathrm{O}(n,\R), \\
&\hspace{7mm}\alpha(H_0^{P}(g) - C_1^P) > 0\ \forall \alpha \in \Delta_0^P\}, \\
\mathfrak S^P(C_1^P,C^P) &= \{g \in \mathfrak S^P(C_1^P): \varpi(H_0^{P}(g) - C^P) < 0 \ \forall \varpi \in \widehat\Delta_0^P\}.
\end{align*}
By reduction theory, we can choose $\omega$ and $C_1$ so that $\mathfrak S^G(C_1)$ contains a fundamental domain for $\Gamma \backslash G$; let us fix these once and for all throughout the paper. In the discussion below, we take $C_1^P = (C_1)_0^P$ and $C^P = (C)_0^P$; we often omit these subscripts, as per the convention we adopted in Section \ref{sec:spaces}.

Let $F^P(g,C)$ be the characteristic function of the projection of $\mathfrak S^P(C_1,C)$ onto $\Gamma \cap P \backslash G$. Also for $H \in \ga_{P_1}$ let
\begin{equation} \label{eq:sigma}
\sigma_{P_1}^{P_2}(H) = \sum_{Q:Q \supseteq P_2} (-1)^{r(Q)-r(P_2)}\tau_{P_1}^Q(H)\widehat\tau_Q(H),
\end{equation}
where $Q$ runs over the standard parabolics, and $\tau_{P_1}^Q$ is defined as in \eqref{eq:obt_Weyl} by replacing $\ga_P$ by $\ga_{P_1}^Q$ and $\varpi \in \widehat\Delta_P$ by $\alpha \in \Delta_{P_1}^Q$. It turns out that $\sigma$ is a characteristic function of a set in $\ga_{P_1}$; more precisely, by the M\"obius inversion formula, it holds that for $P \supseteq P_1$
\begin{equation*}
\tau_{P_1}^P(H)\widehat\tau_P(H) = \sum_{P_2 : P_2 \supseteq P} \sigma_{P_1}^{P_2}(H)
\end{equation*}
(see \cite[(8.2)]{Art05}). In particular, for each $P_1$, the supports of $\sigma_{P_1}^{P_2}$, for $P_2 \supseteq P_1$, form a partition of the obtuse Weyl chamber \eqref{eq:obt_Weyl} in $\ga_{P_1}$. 

Then, for locally bounded $\phi:\Gamma \backslash G \rightarrow \C$, its truncation $\Lambda^C\phi(g)$ can be written as
\begin{equation*}
\sum_{P_1 \subseteq P_2} \sum_{\delta \in \Gamma \cap P_1 \backslash \Gamma} F^{P_1}(\delta x, C)\sigma_{P_1}^{P_2}(H_{P_1}(\delta x) - C)\phi_{P_1,P_2}(\delta x),
\end{equation*}
where $P_1 \subseteq P_2$ run over standard parabolic subgroups, and
\begin{equation*}
\phi_{P_1,P_2}(y) = \sum_{\{P: P_1 \subseteq P \subseteq P_2\}} (-1)^{r(P)} \phi_P(y).
\end{equation*}
(See Chs. 8 and 13 of \cite{Art05} for proof, especially pages 37-43.) Let us write $\chi_{P_1,P_2}^C(x) = F^{P_1}(x,C)\sigma_{P_1}^{P_2}(H_{P_1}(x)-C)$, so that
\begin{equation}\label{eq:arthur_t_defn2}
\Lambda^C\phi(x) = \sum_{P_1 \subseteq P_2} \sum_{\delta \in \Gamma \cap P_1 \backslash \Gamma} \chi_{P_1,P_2}^C(\delta x)\phi_{P_1,P_2}(\delta x).
\end{equation}
In particular, $\chi_{G,G}^C$ is the characteristic function of the ``untruncated region,'' on which $\Lambda^C \phi(x) = \phi(x)$.

This reformulation of the Arthur truncation can in particular be used to prove that, if $\phi$ is of uniform moderate growth, then $\Lambda^C\phi$ is of rapid decay --- see e.g., \cite[Proposition 13.2]{Art05} for a proof.

Let us also define the \emph{harsh truncation}
\begin{equation*}
\mathrm H^C\phi(g) = \begin{cases} \phi(g) & g \in \mathrm{supp}\,\chi^C_{G,G} \\ 0 & g \not\in \mathrm{supp}\,\chi^C_{G,G}. \end{cases}
\end{equation*}
Intuitively, this may be the more useful form of truncation for applications to the geometry of numbers. The major drawback, however, is that it lacks a tractable inner product formula, unlike the Arthur truncation.

\subsection{Maass-Selberg relations} \label{sec:maass}

For Borel Eisenstein series $E_B(\lambda_1,g)$ and $E_B(\lambda_2,g)$, the Maass-Selberg relation is given by
\begin{align*}
\int_{\Gamma \backslash G} \Lambda^CE_B(\lambda_1,g) &\Lambda^CE_B(\lambda_2,g) dg \\
&= \sum_{w_1,w_2 \in W} \frac{e^{\langle C, w_1\lambda_1+w_2\lambda_2\rangle}}{\prod_{\alpha \in \Delta}\langle \alpha^\vee, w_1\lambda_1+w_2\lambda_2\rangle}M(w_1,\lambda_1)M(w_2,\lambda_2),
\end{align*}
where
\begin{equation*}
M(w,\lambda) = \prod_{\alpha > 0 \atop w\alpha < 0} c(\langle \alpha^\vee, \lambda \rangle),\ c(z) := \frac{\xi(z)}{\xi(1+z)}.
\end{equation*}
This formula is initially stated for sufficiently regular $\lambda_1$ and $\lambda_2$, but it can be extended further by meromorphic continuation.
We note that $c(z)$ has a simple zero at $z = -1$ and a simple pole at $z = 1$. Also, $c(z) = (c(-z))^{-1}$.

In our later computation, we choose 
\begin{equation*}
C = \log T \cdot \rho^\vee := \log T \cdot \left(\frac{n-1}{2},\frac{n-3}{2},\ldots,\frac{1-n}{2}\right)
\end{equation*}
for a sufficiently large $T \in \R^+$, so that
\begin{equation*}
e^{\langle C, w_1\lambda_1+w_2\lambda_2\rangle} = T^{\langle \rho^\vee, w_1\lambda_1+w_2\lambda_2\rangle}.
\end{equation*}

\section{Growth properties of $\Lambda^CE_f$}

\subsection{Uniform moderate growth and rapid decay}

Recall that a $N_B \cap \Gamma$-invariant complex-valued function $\phi$ on $G$ is said to be \emph{of moderate growth of exponent $\lambda$} if it holds that $|\phi(g)| \ll_{C_1} a(g)^\lambda$ for all $g \in \mathfrak S^B(C_1)$. Recall also that $\phi$ is said to be \emph{of rapid decay} if it is of moderate growth of any exponent $\lambda \in \mathfrak a^*$, and is said to be of \emph{uniform} moderate growth of exponent $\lambda$, or $\lambda$-UMG for short, if $\phi$ is smooth and $L\phi$ is of moderate growth of the same exponent $\lambda$ for any $L \in U(\Lg)$.

In this section, and the next, we study the growth properties of the truncated function $\Lambda^CE_f$, with the goal of showing that it does not differ much from the ``harsh truncation'' (Theorem \ref{thm:truncate} below). In doing so we need to control the growth of $E_P(s,g)$ simultaneously for all $s$ lying on a vertical line. The following proposition accomplishes this goal.

\begin{proposition}\label{prop:uni_gro'}
Let $s \in \R$, $\chi \in \ga^*$ and $L \in U(\Lg)$. Then
\begin{equation*}
|La(g)^{s\chi}| \leq |p(s)a(g)^{s\chi}|
\end{equation*}
for some polynomial $p(s)$ whose degree is equal to the degree of $L$. $p(s)$ depends on the choice of $\chi$ and $L$.
\end{proposition}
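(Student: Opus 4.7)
The plan is to prove, by induction on $d := \deg L$, the refined statement that
\[ L\,a(g)^{s\chi} \;=\; q_L(g;s)\,a(g)^{s\chi}, \]
where $q_L(g;s)$ is a polynomial in $s$ of degree exactly $d$ whose coefficients are smooth functions of $g$ bounded uniformly on $G$. Once this is in hand, the polynomial $p(s)$ asserted in the statement is obtained by dominating each coefficient of $q_L$ by its supremum over $G$ and packaging these bounds into a single polynomial of degree $d$.

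The base case $L = X \in \Lg$ is the principal calculation. Writing $g = n(g) a(g) k(g)$ in $NAK$ form, one has
\[ g e^{tX} \;=\; n(g)\,a(g)\, e^{t\,\mathrm{Ad}(k(g))\,X}\, k(g), \]
and Iwasawa-decomposing $e^{t\,\mathrm{Ad}(k(g))X} = n(t)\, a(t)\, k(t)$ (with $n(0) = a(0) = k(0) = e$), the conjugation $a(g)\,n(t)\,a(g)^{-1} \in N_B$ allows one to read off the $NAK$ factors of $g e^{tX}$, giving
\[ a(ge^{tX}) \;=\; a(g)\,a(t), \qquad k(ge^{tX}) \;=\; k(t)\,k(g). \]
Differentiating in $t$ at $0$ then produces
\[ X\,a(g)^{s\chi} \;=\; s\cdot\chi\!\bigl((\mathrm{Ad}(k(g))X)_{\ga}\bigr)\, a(g)^{s\chi}, \]
where $(\cdot)_{\ga}$ denotes projection onto $\ga$ in the Iwasawa decomposition $\Lg = \mathfrak{n}\oplus\ga\oplus\mathfrak{k}$. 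The crucial observation is that the coefficient depends on $g$ only through $k(g)$, which ranges over a compact subgroup $K \subseteq G$, so it is bounded on $G$. This handles $d = 1$, and $d = 0$ is immediate.

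For the inductive step, factor $L = X\cdot L'$ with $\deg L' = d-1$ and apply the Leibniz rule:
\[ L\,a(g)^{s\chi} \;=\; X\bigl(q_{L'}(g;s)\bigr)\, a(g)^{s\chi} \;+\; q_{L'}(g;s)\cdot X\!\bigl(a(g)^{s\chi}\bigr). \]
The second summand is manifestly a polynomial in $s$ of degree $d$ with bounded coefficients. For the first, note that the coefficients of $q_{L'}$ are, by the base case and the inductive hypothesis, smooth functions on $G$ that factor through the map $g\mapsto k(g)\in K$. This class of functions is closed under the left-invariant derivative $X$: from $k(ge^{tX}) = k(t)\,k(g)$ and the chain rule on $K$, $Xf$ again factors through $g\mapsto k(g)$ whenever $f$ does. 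Since smooth functions on the compact group $K$ are bounded, $X(q_{L'}(g;s))$ is a polynomial in $s$ of degree $d-1$ with bounded coefficients, and the induction closes.

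The main obstacle is the base-case derivative calculation --- unraveling the Iwasawa factors of $ge^{tX}$ to express $X\,a(g)^{s\chi}$ as the stated product. With that identity in hand, the inductive step requires nothing beyond Leibniz's rule together with the stability, under left-invariant differentiation, of the "factors through $k(g)$" property, whose boundedness is then automatic from compactness of $K$.
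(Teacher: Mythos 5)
Your proof is correct and follows essentially the same route as the paper's: both hinge on the NAK computation $a(ge^{tX}) = a(g)\,a\!\bigl(\exp(t\,\mathrm{Ad}(k(g))X)\bigr)$ and on compactness of the $K$-factor to bound the resulting coefficients uniformly in $g$. The only organizational difference is that the paper carries out the iteration as a single multi-variable derivative in $t_1,\dots,t_d$, while you phrase it as a Leibniz-rule induction with the auxiliary observation that the coefficient functions factor through $g\mapsto k(g)$ and this class is stable under $X$; the substance is the same.
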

\begin{proof}
Let us first consider the simplest case, in which $L=X \in \Lg$. The chain rule implies that
\begin{equation*}
Xa(g)^{s\chi} = -sa(g)^{(s-1)\chi} \cdot Xa(g)^\chi.
\end{equation*}
Taking the NAK decomposition $g=nak$, we can write
\begin{equation*}
Xa(g)^\chi = \frac{d}{dt}\bigg\rvert_{t=0}a( nae^{tkXk^{-1}}k)^\chi = \frac{d}{dt}\bigg\rvert_{t=0}a(ae^{tkXk^{-1}})^\chi.
\end{equation*}
Again taking the NAK decomposition $n(t)a(t)k(t)$ of $e^{tkXk^{-1}}$, we find that the above equals
\begin{equation*}
\frac{d}{dt}\bigg\rvert_{t=0}(a \cdot a(t))^\chi = a^\chi \cdot \frac{d}{dt}\bigg\rvert_{t=0}a(t)^\chi.
\end{equation*}
This quantity, of course, depends on $X$ and $k$. However, since $k$ lies in a compact subset of $G$ (namely $\mathrm{SO}(n,\R)$), its size can be bounded independently of $k$. Hence we conclude
\begin{equation*}
\left|Xa(g)^{s\chi}\right| \ll_X \left| sa(g)^{s\chi} \right|.
\end{equation*}

We proceed similarly for general $L \in U(\Lg)$.
It suffices to consider the case $L = X_d \cdots X_1$, $X_i \in \Lg$, and to prove that $|La(g)^\chi| \ll_L |a(g)^\chi|$. Again write $g=nak$ for the NAK decomposition of $g$, and recursively take, for $i=d, \ldots, 1$ and $t_d, \ldots, t_1 \in \R$, $n_i(t_d, \ldots, t_i)a_i(t_d, \ldots, t_i)k_i(t_d, \ldots, t_i)$ to be the NAK decomposition of $\exp(t_il_iX_il_i^{-1})$, where
\begin{equation*}
l_i = k_{i+1}(t_{i+1}) \cdots k_d(t_d) \cdot k.
\end{equation*}
Arguing similarly as in $d=1$ case above, we obtain
\begin{align*}
X_d \cdots X_1 a(g)^{\chi} &= \frac{d}{dt_d}\bigg\rvert_{t_d=0} \cdots \frac{d}{dt_1}\bigg\rvert_{t_1=0} (a \cdot a_d \cdots a_1)^{\chi} \\
&= a^{\chi} \cdot \frac{d}{dt_d}\bigg\rvert_{t_d=0} \cdots \frac{d}{dt_1}\bigg\rvert_{t_1=0} (a_d \cdots a_1)^{\chi}.
\end{align*}
Each of the derivatives in the last line depends on $L$ and $k$, but again, since $k$ lies in a compact set, its size is bounded independently of $k$, and thus of $g$. This completes the proof.
\end{proof}

\begin{remark}
From Proposition \ref{prop:uni_gro'} and the Leibniz integral rule, it follows that, for any $L \in U(\Lg)$ and $s \in \C$ with $\re s = c > n$,
\begin{equation*}
|LE_P(s,g)| \leq |p_L(s)E_P(c,g)|,
\end{equation*}
where $p_L(s)$ is some polynomial whose degree is equal to that of $L$. Since a Borel Eisenstein series, in particular $E_P(c,g)$, is of moderate growth, say of exponent $\chi$, by well-known results\footnote{see e.g., \cite[I.2.17]{MW95} or \cite[Appendix A]{GMP17}; it can also be derived from the result of \cite{Kim23}, with a specific value for $\chi$.}, this implies that
\begin{equation}\label{eq:more}
|LE_P(s,g)| \ll |p_L(s)a(g)^\chi|
\end{equation}
for any $s$ on the line $\re s = c > n$. This provides the desired uniform control over all $E_P(s,g)$ on a vertical line.

In fact, we will later need a slightly stronger statement than \eqref{eq:more}: namely
\begin{equation}\label{eq:more2}
|LE_B(\lambda(s)+\varepsilon\rho,g)| \ll |p_L(s,\varepsilon)a(g)^\chi|,
\end{equation}
where $\lambda(s)$ is as in the statement of Proposition \ref{prop:epeb}, $\re s = c > n$ as before, $\varepsilon \in \R$ lies in a small neighborhood of $0$, and $\chi$ is some element of $\ga^*$ depending on $c$. This follows from the fact that $|La(g)^{\lambda(s)+\varepsilon\rho}| \ll |p_L(s,\varepsilon)a(g)^{\lambda(s)+\varepsilon\rho}|$, which in turn follows from Proposition \ref{prop:uni_gro'}.
\end{remark}

The uniform moderate growth of $E_f$ and the rapid decay of $\Lambda^CE_f$ follow quickly from \eqref{eq:more}.

\begin{proposition} \label{prop:umg}
For $f:\R^+ \rightarrow \R$ such that $Mf(s)$ is of rapid decay on the vertical line $\re s = c > n$, $E_f(g)$ is of uniform moderate growth of the same exponent as $E_P(c,g)$. In addition, $\Lambda^CE_f$ is of rapid decay.
\end{proposition}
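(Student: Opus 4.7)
The plan is to reduce both claims to the Mellin inversion identity \eqref{eq:pseisen2} together with the uniform bound \eqref{eq:more} on $|LE_P(s,g)|$, and then invoke the classical rapid-decay result for Arthur truncations.

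For the UMG claim, I would fix $L \in U(\Lg)$ and $g \in \mathfrak S^B(C_1)$, and apply \eqref{eq:more} along the line $\re s = c$: it provides a polynomial $p_L(s)$ and an exponent $\chi \in \ga^*$ (the moderate-growth exponent of $E_P(c,g)$, which is the one asserted in the statement) such that $|LE_P(s,g)| \ll |p_L(s)|\,a(g)^\chi$ uniformly in $s$ on the contour. Since $Mf(s)$ is assumed to be of rapid decay on $\re s = c$ while $p_L(s)$ has only polynomial growth, the product $Mf(s)\,LE_P(s,g)$ is integrable in $s$, with a bound depending on $g$ only through the factor $a(g)^\chi$. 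Dominated convergence then justifies pushing $L$ inside the Mellin contour integral in \eqref{eq:pseisen2}, yielding
\begin{align*}
LE_f(g) = \frac{1}{2\pi i}\int_{\re s = c} Mf(s)\,LE_P(s,g)\,ds.
\end{align*}
Estimating under the integral then gives $|LE_f(g)| \ll_L a(g)^\chi$ for every $g \in \mathfrak S^B(C_1)$, which is exactly UMG of the claimed exponent.

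Once UMG is in hand, the rapid decay of $\Lambda^CE_f$ is immediate from the classical fact (\cite[Proposition 13.2]{Art05}, recalled right after \eqref{eq:arthur_t_defn2}) that Arthur truncation sends uniformly moderately growing functions on $\Gamma\backslash G$ to rapidly decaying ones.

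The only step that invites any care is the interchange of $L$ with the Mellin contour integral, and this is precisely where the strengthened, polynomial-in-$s$ form of \eqref{eq:more} (as opposed to a merely pointwise-in-$s$ bound) is what makes dominated convergence apply with a control independent of $g$ up to the prefactor $a(g)^\chi$. This is the reason the remark preceding the proposition was stated in that form. Beyond this routine verification, the proof essentially assembles the tools already developed in this section with an off-the-shelf truncation result, so I do not anticipate any substantive obstacle.
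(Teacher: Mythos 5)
Your proof follows the same route as the paper's: it interchanges $L$ with the Mellin contour integral (the paper cites the Leibniz integral rule where you invoke dominated convergence, but the justification is the same), uses the uniform polynomial-in-$s$ bound \eqref{eq:more} together with the rapid decay of $Mf(s)$ to establish UMG with exponent $\chi$, and then appeals to the standard fact that Arthur truncation of a UMG function is of rapid decay. The argument is correct and essentially identical to the paper's.
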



\begin{proof}

By the Leibniz integral rule and \eqref{eq:more},
\begin{align*}
\left| LE_f(g) \right| &= \left| \int_{\re s=c} Mf(s) LE_P(s,g) ds \right| \\
&\leq |E_P(c,g)| \int_{\re s=c} \left|p_L(s)Mf(s)\right| ds.
\end{align*}
The integral converges by the rapid decay of $Mf(s)$. Thus $E_f$ is of uniform moderate growth of the same exponent as $E_P(c,g)$.

The rapid decay statement follows from the fact that the Arthur truncation of a function of uniform moderate growth is of rapid decay, as mentioned in Section \ref{sec:trunc}.
\end{proof}


\subsection{Size in the truncated region}

The main goal of this section is to prove Theorem \ref{thm:truncate} below, which asserts that, for all $\phi$ of uniform moderate growth, and for all sufficiently large $T$, $\Lambda^C \phi$ is arbitrarily small outside $\mathrm{supp}\,\chi_{G,G}^C$, in which $\Lambda^C \phi(g) = \phi(g)$. It is a refinement of \cite[Proposition 13.2]{Art05} --- which shows that $\Lambda^C \phi$ is of rapid decay, but does not guarantee it is small everywhere in the truncated region --- with applications to the geometry of numbers in mind. In addition, we track the constants implicit in our estimates in case $\phi = E_P(s,g)$ or $E_B(\lambda(s)+\varepsilon\rho,g)$ with varying values of $\im s$.

We need some lemmas to proceed. Recall that we are taking
\begin{equation*}
C = C(T) = \log T \cdot \left(\frac{n-1}{2},\frac{n-3}{2},\ldots,\frac{1-n}{2}\right)
\end{equation*}
for our computations, though any sufficiently regular element of $\ga^*$ will do. We also assume $P_1 \subseteq P_2$, not both $G$, are standard parabolics in the lemmas below.

\begin{lemma} \label{lemma:umg1}
The support of $F^{P_1}(x, C)$ is bounded on $\ga^{P_1}$. More precisely, any $x$ in the support has $\varpi(x)$ bounded by a linear function on $\log T$ for all $\varpi \in \widehat\Delta^{P_1}$.
\end{lemma}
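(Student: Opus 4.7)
The plan is to unpack the definitions of $F^{P_1}(\cdot,C)$ and $\mathfrak S^{P_1}(C_1,C)$ and read off the desired bounds directly from the inequalities cut out in $\ga^{P_1}$. By construction, $F^{P_1}(g,C) \neq 0$ exactly when some $\Gamma \cap P_1$-coset representative of $g$ lies in $\mathfrak S^{P_1}(C_1,C)$; for such a representative, $x := H_0^{P_1}(g) \in \ga^{P_1}$ satisfies both
\begin{equation*}
\alpha(x - C_1) > 0 \ \forall \alpha \in \Delta_0^{P_1} \quad \text{and} \quad \varpi(x - C) < 0 \ \forall \varpi \in \widehat\Delta_0^{P_1}.
\end{equation*}
The second system immediately yields the upper bound: for each $\varpi \in \widehat\Delta^{P_1} = \widehat\Delta_0^{P_1}$, we have $\varpi(x) < \varpi(C) = \log T \cdot \varpi(\rho^\vee)$, which is affine linear in $\log T$.

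For a matching lower bound, I would invoke the standard containment of the acute Weyl chamber inside the obtuse one, adapted to the subspace $\ga^{P_1}$. Concretely, expand $x - C_1 = \sum_i c_i\, \varpi_i^\vee$ in the basis $(\widehat\Delta_0^{P_1})^\vee$ of $\ga^{P_1}$: the first system gives $c_i = \alpha_i(x - C_1) > 0$, while a direct computation from the explicit formulas recorded in Section 2 yields $\varpi_k(\varpi_i^\vee) = \min(i,k) - ik/n \geq 0$. Hence $\varpi(x - C_1) \geq 0$, so $\varpi(x) \geq \varpi(C_1)$, a $T$-independent constant. Combining, $\varpi(C_1) \leq \varpi(x) < \log T \cdot \varpi(\rho^\vee)$, which is the claim.

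The argument is essentially bookkeeping, and I do not foresee any significant obstacle. The only thing to keep straight is that all chamber inequalities live inside $\ga_0^{P_1}$ rather than the ambient $\ga_0$, but this matches the way $\Delta_0^{P_1}$ and $\widehat\Delta_0^{P_1}$ were set up in Section 2.
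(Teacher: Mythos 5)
Your proof is correct and takes the same route as the paper's one-line argument (read off the two systems of inequalities defining $\mathfrak S^{P_1}(C_1,C)$; the obtuse condition gives the upper bound $\varpi(x)<\varpi(C)$, the acute condition gives the $T$-independent lower bound). You supply the detail the paper leaves implicit for the lower bound, namely the inclusion of the acute Weyl chamber in the obtuse one via $\varpi_k(\varpi_i^\vee)\geq0$. One small remark: the explicit formula $\varpi_k(\varpi_i^\vee)=\min(i,k)-ik/n$ is the one for the ambient $\SL(n)$; when $P_1\neq P_0$ the relevant pairing lives inside $\ga^{P_1}$, i.e.\ block-by-block with the block size $m$ in place of $n$ (and the pairing vanishes across different blocks), so the correct formula is $\min(i',k')-i'k'/m$ in local block coordinates. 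The nonnegativity, which is all you use, holds in either form, so the conclusion is unaffected.
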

\begin{proof}
This can be read off from the definition of $F^{P_1}$. Any $x$ in the support of $F^{P_1}$ is also in $\mathfrak S^P(C_1,C)$, and hence for each $\varpi \in \widehat\Delta^{P_1}$, $\varpi(x)$ is bounded below by a constant (depending on $C_1$, which we fixed), and above by $\varpi(C)$, a linear function in $\log T$. 
\end{proof}

\begin{lemma} \label{lemma:umg1'}
Let $x = x_0 + x_1 \in \ga_{P_1}$ with $x_0 \in \ga_{P_1}^{P_2}, x_1 \in \ga_{P_2}$, such that $\sigma_{P_1}^{P_2}(x) = 1$. Then $\|x_1\| \ll \|x_0\|$, where the implied constant depends only on $P_1$ and $P_2$.
\end{lemma}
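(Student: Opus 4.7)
The plan is to combine the classical combinatorial description of the support of $\sigma_{P_1}^{P_2}$ with a simple homogeneity/compactness argument. Although $\sigma_{P_1}^{P_2}$ is defined as an alternating sum of products of characteristic functions of cones, the excerpt has already invoked the standard fact (going back to Langlands and refined by Arthur, cf.\ \cite[Ch.~8]{Art05}) that it is itself the characteristic function of a convex polyhedral cone $K\subseteq\ga_{P_1}$. In particular, $K$ is invariant under multiplication by positive scalars. Among the defining inequalities of $K$ are the conditions $\alpha(x_0)>0$ for every $\alpha\in\Delta_{P_1}^{P_2}$, inherited from the $Q=P_2$ summand $\tau_{P_1}^{P_2}\widehat\tau_{P_2}$; the cancellations across $Q\supsetneq P_2$ add further ``coupling'' inequalities between $x_0$ and $x_1$.

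First, I would reduce to the case $P_1\subsetneq P_2$, since the statement is vacuous when $P_1=P_2$ (then $\ga_{P_1}^{P_2}=0$, so $x_0=0$ and $x_1=x$, and one may even take the implied constant to be $0$). Under this hypothesis $\Delta_{P_1}^{P_2}$ is a non-empty basis of $(\ga_{P_1}^{P_2})^*$, so the open conditions $\alpha(x_0)>0$ force $x_0\neq 0$ for every non-zero $x\in K$; in particular $K\cap\ga_{P_2}=\{0\}$. The crucial strengthening required for compactness is that the closure $\overline K$ still meets $\ga_{P_2}$ only at the origin. This is where the coupling inequalities between $x_0$ and $x_1$ do real work: in the limit $x_0\to 0$ inside $\overline K$ they collapse to force $x_1=0$ as well. (For $n=3$ and $P_2$ maximal one checks this directly; in general it follows from the explicit cone description in the combinatorial lemma.)

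Once pointedness of $\overline K$ transverse to $\ga_{P_2}$ is in hand, the remainder is routine convex geometry. The function $\phi(x):=\|x_1\|/\|x_0\|$ is well-defined, continuous, and $0$-homogeneous on $\overline K\setminus\{0\}$, because the denominator vanishes only on $\ga_{P_2}\cap\overline K=\{0\}$. By $0$-homogeneity, $\sup_{K\setminus\{0\}}\phi=\sup_{S}\phi$, where $S=\overline K\cap\{x:\|x\|=1\}$ is compact. Hence $\phi$ attains a finite maximum $C=C(K)$ on $S$, and one concludes $\|x_1\|\leq C\|x_0\|$ for all $x\in K$. Since $K$ is determined by $P_1$ and $P_2$ alone, so is $C$.

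The main obstacle I anticipate is verifying the pointedness statement $\overline K\cap\ga_{P_2}=\{0\}$ without simply quoting the full explicit description of $K$ from Arthur's combinatorial lemma. A soft route would be to note that $\sigma_{P_1}^{P_2}$ is, by hypothesis, genuinely a $\{0,1\}$-valued characteristic function (not just an alternating sum), and to argue on the supports $\mathrm{supp}\,\tau_{P_1}^Q\widehat\tau_Q$ for $Q\supseteq P_2$ that the only elements of $\ga_{P_2}$ surviving the inclusion-exclusion lie at the origin; but the cleanest route remains to invoke Arthur's description and then run the compactness argument above.
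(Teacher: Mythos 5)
The paper's own proof of this lemma is a one-line citation to \cite[Lemma~8.3(b)]{Art05}, so what you are really doing is trying to reconstruct Arthur's argument. Your reduction is correct and worth recording: writing $K$ for the support of $\sigma_{P_1}^{P_2}$, the asserted bound $\|x_1\|\ll\|x_0\|$ on $K$ is \emph{equivalent} to the pointedness statement $\overline K\cap\ga_{P_2}=\{0\}$, by exactly the $0$-homogeneity/compactness argument you run on $S=\overline K\cap\{\|x\|=1\}$. Two small corrections along the way. Convexity of $K$ plays no role in that argument --- you only need $K$ to be a cone, which is immediate because each $\tau_{P_1}^Q\widehat\tau_Q$ is invariant under positive dilation --- and in fact the paper only asserts that $\sigma_{P_1}^{P_2}$ is the characteristic function of a \emph{set}, not of a convex cone. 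Also, your parenthetical treatment of $P_1=P_2$ (``take the implied constant to be $0$'') amounts to the claim that $\mathrm{supp}\,\sigma_{P_1}^{P_1}$ is trivial; that should be verified, not asserted, or else one should just note that the statement is only applied for $P_1\subsetneq P_2$.

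The substantive gap is the one you flag yourself: you do not establish the pointedness, you defer it to ``the explicit cone description,'' which is Arthur's Lemma~8.3(a). So in the end your proposal trades the paper's citation of 8.3(b) for a citation of 8.3(a) plus a compactness argument --- both still lean on Arthur's combinatorial lemma. I would also push back on your proposed ``soft route.'' The M\"obius relation $\tau_{P_1}^{P_2}\widehat\tau_{P_2}=\sum_{Q\supseteq P_2}\sigma_{P_1}^Q$ together with $\sigma_{P_1}^Q\geq 0$ does give $\mathrm{supp}\,\sigma_{P_1}^{P_2}\subseteq\mathrm{supp}\,\tau_{P_1}^{P_2}\cap\mathrm{supp}\,\widehat\tau_{P_2}$, but these two conditions constrain $x_0$ and $x_1$ \emph{separately} (the first factor is a function on $\ga_{P_1}^{P_2}$, the second on $\ga_{P_2}$), so they produce no coupling and no pointedness. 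The coupling inequalities --- which in small examples take the form $\alpha(H)\leq 0$ for simple roots $\alpha$ sitting at $P_2$-block boundaries --- emerge only from the full inclusion-exclusion over $Q\supseteq P_2$, and extracting them is precisely the nontrivial content of Arthur's lemma. Unless you intend to redo that combinatorics from scratch, the honest route is the paper's: take 8.3(b) off the shelf.
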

\begin{proof}
This is Lemma 8.3(b) of \cite{Art05}.
\end{proof}

\begin{lemma} \label{lemma:umg2}
If $\phi:\Gamma \backslash G \rightarrow \C$ is $\lambda$-UMG, then 
\begin{equation*}
|\phi_{P_1,P_2}(g)| \ll a(g)^{\lambda-\mu}
\end{equation*} for any linear combination $\mu$ of the elements in $\Delta^{P_2} - \Delta^{P_1}$. The implied constant depends on $\phi$ and $\mu$ only.
\end{lemma}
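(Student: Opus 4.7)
My plan is to Fourier-analyze $\phi(\cdot g)$ along the compact nilmanifold $(\Gamma \cap N_{P_1})\backslash N_{P_1}$, show via inclusion-exclusion that the alternating sum defining $\phi_{P_1,P_2}$ retains only those Fourier modes non-trivial along every simple root $\alpha \in \Delta^{P_2} - \Delta^{P_1}$, and then obtain the decay $a(g)^{-\mu}$ by integration by parts using the UMG hypothesis on $\phi$.

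Concretely, in the $\SL(n)$ setting it suffices to Fourier-expand on the abelianization $N_{P_1}^{\mathrm{ab}} = N_{P_1}/[N_{P_1},N_{P_1}]$, which is a product of one-dimensional simple-root subgroups $U_\alpha$, $\alpha \in \Delta - \Delta^{P_1}$. Characters of $(\Gamma \cap N_{P_1})\backslash N_{P_1}^{\mathrm{ab}}$ are then indexed by integer tuples $(n_\alpha)_\alpha$; write $\mathrm{supp}(\chi) = \{\alpha : n_\alpha \neq 0\}$. For each intermediate parabolic $P$, the constant term $\phi_P(g)$ picks up precisely those Fourier coefficients $\hat\phi_\chi(g)$ with $n_\alpha = 0$ for $\alpha \in \Delta - \Delta^P$. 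An inclusion-exclusion over the poset $\{P : P_1 \subseteq P \subseteq P_2\}$, based on $\sum_{T \subseteq A}(-1)^{|T|} = 0$ for $A \neq \emptyset$, then yields
\begin{equation*}
\phi_{P_1,P_2}(g) = (-1)^{r(P_2)} \sum_{\mathrm{supp}(\chi) = \Delta^{P_2} - \Delta^{P_1}} \hat\phi_\chi(g).
\end{equation*}

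Next I would bound each surviving $\hat\phi_\chi(g)$ by integration by parts. On a Siegel set, $\chi$ oscillates along the root vector $X_\alpha$ with effective frequency $n_\alpha \cdot a(g)^\alpha$ (the $a(g)^\alpha$-factor arising from conjugation of $X_\alpha$ by $a(g)$), so each integration by parts along $X_\alpha$ contributes a factor $(n_\alpha a(g)^\alpha)^{-1}$ while replacing $\phi$ by $X_\alpha\phi$, which remains $\lambda$-UMG. Iterating $m_\alpha \geq 2$ times in each direction and summing the convergent series $\sum_{n_\alpha \neq 0} |n_\alpha|^{-m_\alpha}$ yields $|\phi_{P_1,P_2}(g)| \ll a(g)^{\lambda - \sum_\alpha m_\alpha \alpha}$, establishing the claim for any $\mu$ with sufficiently positive coefficients; other $\mu$ follow together with the trivial bound $a(g)^{-c\alpha} \leq 1$ for $c \leq 0$ on the Siegel set.

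The chief obstacle I anticipate is the non-abelianness of $N_{P_1}$: higher commutators produce characters on $(\Gamma \cap N_{P_1})\backslash N_{P_1}$ beyond those of the abelianization, and one must verify that they do not disrupt the inclusion-exclusion cancellation above. I plan to address this by performing Fourier analysis successively on the graded pieces of the descending central filtration of $N_{P_1}$; the higher-commutator modes are controlled independently by integration by parts along the appropriate higher root vectors and contribute only extra decay. A routine secondary verification is that constant-term integration preserves $\lambda$-UMG and commutes with $U(\Lg)$-action up to bounded conjugation by the compact factor in the NAK decomposition, which follows from the compactness of the integration region.
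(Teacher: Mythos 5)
Your route is genuinely different from the paper's. You do a direct Fourier analysis of $\phi$ along $N_{P_1}$, use inclusion--exclusion on the parabolic poset to isolate the ``fully non-degenerate'' Fourier modes of $\phi_{P_1,P_2}$, and then integrate by parts. The paper instead works at the level of constant-term operators: writing $s_\alpha\Psi = \Psi - \Psi_{P_\alpha}$ for the maximal parabolic $P_\alpha$ with $\Delta - \Delta^{P_\alpha} = \{\alpha\}$, it observes that $\phi_{P_1,P_2} = (-1)^{r(P_2)}\bigl(\prod_{\alpha \in \Delta^{P_2}-\Delta^{P_1}} s_\alpha\bigr)\phi_{P_2}$, and then iterates the Harish-Chandra single-root estimate $s_\alpha\Psi \ll a(g)^{\lambda-\alpha}$ (cited from Garrett and GMP17), exploiting that the $s_\alpha$ are idempotent and commute. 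The paper's factorization buys modularity: the non-abelian Fourier bookkeeping is entirely absorbed into the cited one-root lemma, whereas you must handle it by hand. You correctly flag this as the chief obstacle, but I would caution that one of your preliminary claims is not right even before the filtration issue: $N_{P_1}^{\mathrm{ab}}$ is \emph{not} in general a product of simple-root subgroups indexed by $\Delta - \Delta^{P_1}$; it contains all root subgroups $U_\beta$ with $\beta$ spanning adjacent Levi blocks (e.g.\ for $P_1 = P(2)$ in $\SL(4)$, $N_{P_1}$ is already abelian and four-dimensional while $|\Delta-\Delta^{P_1}|=1$). Consequently, a mode may be nontrivial on $N_{P_1}^{\mathrm{ab}}$ without having $n_\alpha \neq 0$ for any simple $\alpha$, and you must integrate by parts along a non-simple root $\beta$. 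This is harmless in the end --- on a Siegel set $a(g)^{-\beta} \leq a(g)^{-\alpha}$ whenever $\beta \geq \alpha$ in the root order --- but your stated inclusion--exclusion identity, which keys on $\mathrm{supp}(\chi)$ being a set of simple roots, needs to be reformulated (e.g.\ by requiring that $\mathrm{supp}(\chi)$ meets each boundary crossed, rather than equals $\Delta^{P_2}-\Delta^{P_1}$) before the integration-by-parts step can be run. Once that and the lower-central-series recursion are carried out carefully, your argument does recover the lemma; it amounts to an unpacked proof of the one-root estimate \eqref{eq:umg_trick} applied simultaneously in all directions, rather than the paper's iterated black-box application.
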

\begin{proof}
Observe first that it is completely harmless if any coefficient of $\alpha \in \Delta^{P_2} - \Delta^{P_1}$ in $\mu$ is negative, and if we restrict ourselves to the case in which the coefficients are integers. Hence it suffices to consider those $\mu$ whose coefficients are nonnegative integers.

We need the following statement well-known in the literature. Suppose $\Psi:\Gamma \backslash G \rightarrow \C$ is $\lambda$-UMG, and $P_\alpha$ is the maximal parabolic such that $\Delta - \Delta^{P_\alpha} = \{\alpha\}$. Then
\begin{equation} \label{eq:umg_trick}
s_\alpha\Psi(g) := \Psi(g) - \Psi_{P_\alpha}(g) \ll a(g)^{\lambda-\alpha},
\end{equation}
where the implied constant depends only on $\Psi$. 
For a proof, see \cite[8.3.6]{Gar18} or \cite[A.3.3]{GMP17} for instance.\footnote{Strictly speaking, the argument in \cite{GMP17} is written for cusp forms, and need to be modified accordingly: specifically, (A.35) there should be
\begin{equation*}
|\Psi_{i-1}(g) - \Psi_i(g)| \ll a(g)^{-\alpha}\max_{X \in \mathcal B \atop r \in [0,1)}|\delta(X)\Psi_{i-1}(\chi_{\beta_i}(r)g)| \ll a(g)^{\lambda-\alpha}.
\end{equation*}}

We apply \eqref{eq:umg_trick} to
\begin{equation*}
\Psi(g) = \phi_{P_1,P_2}(g) = (-1)^{r(P_2)}\left(\prod_{\alpha \in \Delta^{P_2}-\Delta^{P_1}} s_\alpha\right)\phi_{P_2}(g).
\end{equation*}
Note $\phi_{P_2}$ is $\lambda$-UMG because $\phi$ is.
Since $s_\alpha$'s are idempotent and pairwise commutative, the lemma follows from a repeated application of \eqref{eq:umg_trick}.
\end{proof}
\begin{remark}
It is sometimes helpful to know exactly what the hidden constant in the above lemma looks like. The arguments in \cite{Gar18} and \cite{GMP17} both reveal that the implied constant in \eqref{eq:umg_trick} is of the form
\begin{equation*}
\sum_{X \in \mathcal B} c(X)b(X,\Psi),
\end{equation*}
where $\mathcal B$ is a pre-determined basis of $\Lg$, $c(X)$ is a constant depending only on $X$, and $b(X,\Psi)$ is the constant appearing in the UMG condition $|X\Psi(g)| \ll a(g)^\lambda$ of $\Psi$. Iterating, it can be shown that, for $\mu$ with nonnegative integral coefficients that sum to $D$, the constant in Lemma \ref{lemma:umg2} is of the form
\begin{equation*}
\sum_{L \in \mathcal B^D} c(L)b(L,\phi).
\end{equation*}

Therefore, in case $\phi(g) = E_P(s,g)$ with $\re s > n$, the constant in the statement of Lemma \ref{lemma:umg2} is a polynomial in $s$ of degree at most $D$. Clearly, the analogous statement applies in case $\phi(g) = E_B(\lambda(s)+\varepsilon\rho,g)$ as well.
\end{remark}

\begin{lemma} \label{lemma:umg3}
Suppose $g \in \mathrm{supp}\,\chi_{P_1,P_2}^C$. Then there exists a constant $p_0 \in \R$ and a linear combination $\kappa$ of $\Delta^{P_2} - \Delta^{P_1}$, both independent of $g$, such that
\begin{equation*}
\sum_{\delta \in \Gamma \cap P_1 \backslash \Gamma} \chi_{P_1,P_2}^C(\delta g) \ll T^{p_0} a(g)^\kappa.
\end{equation*}
The implied constant depends only on $G$.
\end{lemma}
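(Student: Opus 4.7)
The strategy is to compare $\chi^C_{P_1,P_2}(\delta g)$ pointwise against a summand of a Borel Eisenstein series, sum over $\delta$, and then invoke moderate growth.

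First, I would extract the constraints on $H_0(\delta g)$ implied by $\chi^C_{P_1,P_2}(\delta g) = 1$: Lemma \ref{lemma:umg1} forces $\varpi(H^{P_1}(\delta g))$ into $[O(1), O(\log T)]$ for every $\varpi \in \widehat\Delta^{P_1}$; the M\"obius inversion recorded in \cite[(8.2)]{Art05} gives $\mathrm{supp}\,\sigma^{P_2}_{P_1} \subseteq \mathrm{supp}\,\widehat\tau_{P_1}$, whence $\varpi(H_{P_1}(\delta g) - C) \geq 0$ for every $\varpi \in \widehat\Delta_{P_1}$; and Lemma \ref{lemma:umg1'} bounds the $\ga_{P_2}$-component of $H_{P_1}(\delta g) - C$ in norm by its $\ga_{P_1}^{P_2}$-component.

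Next, I would pick $\lambda \in \ga^*$ deep in the Godement range, so that the Borel Eisenstein series $E_B(\lambda, g) = \sum_\gamma a(\gamma g)^{\lambda + \rho}$ converges absolutely and satisfies the moderate-growth estimate $E_B(\lambda, g) \ll a(g)^{\lambda+\rho}$. Writing $\lambda + \rho = \sum_i (c_i + 1)\varpi_i$ with $c_i + 1 > 0$, the restriction $(\lambda+\rho)|_{\ga_{P_1}}$ has strictly positive $\widehat\Delta_{P_1}$-coefficients. Combined with the support analysis, this yields $a(\delta g)^{\lambda+\rho} \geq T^{-p_0}$ on $\mathrm{supp}\,\chi^C_{P_1,P_2}$ for some $p_0 = p_0(\lambda) \in \R$ (with the $\ga_0^{P_1}$-contribution absorbed by the $O(\log T)$ bound and the $\ga_{P_1}$-contribution bounded below by $(\lambda+\rho)(C_{P_1}) = O(\log T)$). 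Consequently,
\[
\sum_{\delta \in \G \cap P_1 \backslash \G} \chi^C_{P_1, P_2}(\delta g) \leq T^{p_0} \sum_{\delta} a(\delta g)^{\lambda+\rho} \leq T^{p_0} E_B(\lambda, g) \ll T^{p_0} a(g)^{\lambda+\rho}.
\]

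Finally, I would choose $\lambda$ so that $\kappa := \lambda + \rho$ lies in $\mathrm{span}(\Delta^{P_2} - \Delta^{P_1})$, up to corrections in the $(\ga_0^{P_1})^*$- and $\ga_{P_2}^*$-directions that can be absorbed into the $T^{p_0}$ factor via Lemmas \ref{lemma:umg1} and \ref{lemma:umg1'}. The main obstacle is this last step: aligning the moderate-growth exponent with $\mathrm{span}(\Delta^{P_2} - \Delta^{P_1})$ under Godement-range positivity. An inspection of the $A_{n-1}$ Cartan matrix shows this cannot always be arranged with $E_B$ alone, so the likely remedy is either to work with a parabolic Eisenstein series at $P_2$ (reindexing the sum via the iteration $\G \cap P_1 \backslash \G = \coprod_{\G \cap P_2 \backslash \G}(\G \cap P_1 \backslash \G \cap P_2)$ and bounding the inner sum by an Eisenstein series on $M_{P_2}$) or to exploit meromorphic continuation of $E_B$ beyond the Godement range to broaden the admissible $\lambda$.
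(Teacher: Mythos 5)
Your overall strategy is sound and genuinely different from the paper's: the paper leans on the bound $\sum_{\delta}\widehat\tau_{P_1}(H_{P_1}(\delta g)-C)\leq c_T\|g\|^{N_1}$ proved by Arthur on pp. 72--73 of \cite{Art05}, then converts $\|g\|$ to $a(g)^{\kappa_0}$ via the NAK decomposition; you instead compare termwise to a Borel Eisenstein series and invoke moderate growth of $E_B(\lambda,\cdot)$. Both routes then feed into the same bookkeeping step — decomposing the resulting exponent into $\ga^{P_1}$-, $\ga_{P_1}^{P_2}$-, and $\ga_{P_2}$-components and trading the first and last against $T^{O(1)}$ and $\operatorname{span}(\Delta^{P_2}-\Delta^{P_1})$-terms using Lemmas~\ref{lemma:umg1} and~\ref{lemma:umg1'} applied to the hypothesis $g\in\operatorname{supp}\chi^C_{P_1,P_2}$. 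You in fact cite exactly the right ingredients for that trade, but then pause at it and declare it the main obstacle; that concern is unfounded. You do not need $\lambda+\rho$ to lie in $\operatorname{span}(\Delta^{P_2}-\Delta^{P_1})$ — that would indeed be impossible for a Godement-range $\lambda$ — because Lemma~\ref{lemma:umg1} absorbs the $(\ga^{P_1})^*$-component into $T^{O(1)}$, and Lemma~\ref{lemma:umg1'} lets you replace the $(\ga_{P_2})^*$-component by a nonnegative $\Delta^{P_2}-\Delta^{P_1}$-combination plus $O(\log T)$, since $\alpha(H_{P_1}^{P_2}(g))>0$ on the support for every $\alpha\in\Delta^{P_2}-\Delta^{P_1}$ (this is precisely what the paper does after the NAK step). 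Note also that $\kappa$ in the lemma need not be a \emph{nonnegative} combination: in Theorem~\ref{thm:truncate} the term $a(g)^\kappa$ is absorbed by replacing $\mu$ with $\mu-\kappa$, so the sign of $\kappa$'s coefficients is irrelevant.

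Two of your proposed remedies should be discarded. Meromorphic continuation of $E_B$ beyond the Godement range destroys the termwise positivity you need for the domination $\sum_{\delta}\chi^C_{P_1,P_2}(\delta g)\leq T^{p_0}\sum_\delta a(\delta g)^{\lambda+\rho}\leq T^{p_0}E_B(\lambda,g)$, so that route is a dead end. The iterated-coset/$M_{P_2}$-Eisenstein workaround is unnecessary once the trade is carried out. One small point that does deserve care in your argument (and is invisible in the paper's route): $a(\delta g)^{\lambda+\rho}$ is $\Gamma\cap B$-invariant but not $\Gamma\cap P_1$-invariant, while $\chi^C_{P_1,P_2}(\delta g)$ depends only on the $\Gamma\cap P_1$-coset; your pointwise bound $\chi^C_{P_1,P_2}(\delta g)\leq T^{p_0}a(\delta g)^{\lambda+\rho}$ therefore requires choosing, for each coset, the representative $\tilde\delta$ with $\tilde\delta g\in\mathfrak S^{P_1}(C_1,C)$, and then observing that the resulting lift $\Gamma\cap P_1\backslash\Gamma\hookrightarrow\Gamma\cap B\backslash\Gamma$ gives a sub-sum of $E_B(\lambda,g)$. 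With that done, your proof closes. One practical remark: the paper's route via Arthur's $\|g\|^{N_1}$ estimate is entirely elementary and avoids having to cite moderate growth of Borel Eisenstein series, which would otherwise have to be referenced separately and quantified.
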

\begin{proof}
In the last paragraph of the proof of Proposition 13.2 in \cite{Art05}, on p.72-73, it is shown that $\chi_{P_1,P_2}^C(\delta g)$ is bounded above by $\widehat\tau_{P_1}(H_{P_1}(\delta g)-C)$, and that
\begin{equation*}
\sum_{\delta \in \Gamma \cap P_1 \backslash \Gamma} \widehat\tau_{P_1}(H_{P_1}(\delta g)-C) \leq c_T\|g\|^{N_1},
\end{equation*}
where $\|g\| = \left(\sum_{i,j}|g_{ij}|^2\right)^{1/2}$, and $N_1$ and $c_T$ are constants depending on $P_1$ and $T$, respectively. Observe that in our context $c_T$ is decreasing in $T$, because $\mathrm{supp}\,\widehat\tau_{P_1}(H_{P_1}(\delta g)-C)$ is. Hence we obtain
\begin{equation*}
\sum_{\delta \in \Gamma \cap P_1 \backslash \Gamma} \chi_{P_1,P_2}^C(\delta g) \ll_{P_1} \|g\|^{N_1}.
\end{equation*}

From the $NAK$ decomposition of $g$, and the fact that its $N$-component lies in a compact set and that $\|g\|$ is $K$-invariant, we have that $\|g\| \ll a(g)^{\kappa_0} = \exp(\kappa_0(H(g)))$ for some $\kappa_0 \in \ga^*$. Observe that $\Delta^{P_1} \cup (\Delta^{P_2}-\Delta^{P_1}) \cup \Delta_{P_2}$ is a basis of $\ga^*$, and write $\kappa_0 = \kappa^1 + \kappa^2 + \kappa_2$ accordingly. It follows from Lemma \ref{lemma:umg1} that
\begin{equation}\label{eq:p0_1}
\kappa^1(H(g)) = O(\log T).
\end{equation}
It remains to bound the size of $\kappa_2(H(g))$. Since $\kappa_2$ is a linear combination of the elements of $\Delta_{P_2}$, it suffices to bound $\beta(H(g))$ for each $\beta \in \Delta_{P_2}$.


Let us write
\begin{equation*}
H(g) = H_0^2 + H_2, \ H_0^2 = H_0^1 + H_1^2, \  H_i^j \in \ga_{P_i}^{P_j}
\end{equation*}
according to the decomposition $\ga = \ga^{P_1}\oplus\ga_{P_1}^{P_2}\oplus\ga_{P_2}$, and similarly for $C$. Lemma \ref{lemma:umg1'} implies that $\|H_2-C_2\| \ll \|H_1^2 - C_1^2\|$. Since $\Delta_{P_1}^{P_2}$ and $\Delta_{P_2}$ induce coordinate systems on $\ga_{P_1}^{P_2}$ and $\ga_{P_2}$ respectively, this shows that, for any $\beta \in \Delta_{P_2}$, there exists $\gamma \in \Delta_{P_1}^{P_2}$ (depending on $\beta$) such that
\begin{align*}
|\beta(H_2 - C_2)| \ll |\gamma(H_1^2 - C_1^2)| \Rightarrow |\beta(H_2)| \ll |\gamma(H_1^2)| + O(\log T).
\end{align*}
Since $\beta(H(g)) = \beta(H_2)$ (because $\beta(H_0^2) = 0$), and similarly $\gamma(H(g)) = \gamma(H_1^2)$, the above inequality implies
\begin{equation*}
|\beta(H(g))| \ll |\gamma(H(g))| + O(\log T).
\end{equation*}
Let us write $\gamma = \sum_{\alpha \in \Delta^{P_2}} c_\alpha \alpha \in \Delta_{P_1}^{P_2}$ in accordance with the decomposition $(\ga^{P_2})^* = (\ga_{P_1}^{P_2})^*\oplus(\ga^{P_1})^*$, so that
\begin{equation*}
|\gamma(H(g))| \leq \sum_{\alpha \in \Delta^{P_2}} |c_\alpha \alpha(H(g))|.
\end{equation*}
For $\alpha \in \Delta^{P_1}$, $\alpha(H(g)) = O(\log T)$ by Lemma \ref{lemma:umg1}. For $\alpha \in \Delta^{P_2} - \Delta^{P_1}$, the inequalities \eqref{eq:bowwow2} proved below imply that $\alpha(H(g)) > \log T$; in particular, it is positive. This shows that, for $\kappa_\beta = \sum_{\alpha \in \Delta^{P_2} - \Delta^{P_1}} |c_\alpha|\alpha$, we have
\begin{equation}\label{eq:p0_2}
|\beta(H(g))| \ll \kappa_\beta(H(g)) + O(\log T),
\end{equation}
as desired.

To complete the proof of the lemma, we set $\kappa = \kappa^2+\sum_{\beta \in \Delta_{P_2}}\kappa_\beta$, and determine $p_0$ by collecting the constants in the $O(\log T)$ terms from \eqref{eq:p0_1} and \eqref{eq:p0_2}.
\end{proof}

Combining the above statements, we arrive at the following desired conclusion.
\begin{theorem} \label{thm:truncate}
For $\lambda$-UMG $\phi$ and any $p > 0$, there exists a constant $c=c(p,\phi)>0$ such that $|\Lambda^C\phi(g)| < cT^{-p}$ outside the support of $\chi_{G,G}^C$.
\end{theorem}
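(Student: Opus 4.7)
The plan is to start from the partition reformulation \eqref{eq:arthur_t_defn2},
\begin{equation*}
\Lambda^C\phi(g) = \sum_{P_1 \subseteq P_2} \sum_{\delta \in \Gamma\cap P_1\backslash\Gamma} \chi_{P_1,P_2}^C(\delta g)\phi_{P_1,P_2}(\delta g),
\end{equation*}
and show that for each $(P_1,P_2) \neq (G,G)$ the corresponding summand is $O(T^{-p})$. The $(G,G)$ summand equals $\chi_{G,G}^C(g)\phi(g)$ and vanishes on the complement of $\mathrm{supp}\,\chi_{G,G}^C$. For each surviving pair, the strategy is to combine Lemma \ref{lemma:umg2} --- which bounds $|\phi_{P_1,P_2}(h)|$ by $a(h)^{\lambda-\mu}$ for any nonnegative integer combination $\mu$ of $\Delta^{P_2}-\Delta^{P_1}$, with constant depending only on $\phi$ and $\mu$ --- with Lemma \ref{lemma:umg3}, which gives $\sum_\delta \chi_{P_1,P_2}^C(\delta g) \ll T^{p_0}a(g)^\kappa$ where $\kappa$ lies in the same span. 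The combined estimate takes the form $T^{p_0}\max_\delta a(\delta g)^{\lambda+\kappa-\mu}$.

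I expect the main obstacle to be the forward-referenced inequality \eqref{eq:bowwow2}: on $\mathrm{supp}\,\chi_{P_1,P_2}^C \setminus \mathrm{supp}\,\chi_{G,G}^C$,
\begin{equation*}
\alpha(H(h)) > \log T \qquad \text{for every } \alpha \in \Delta^{P_2}-\Delta^{P_1}.
\end{equation*}
To prove this, I would unpack the partition: if $h \notin \mathrm{supp}\,\chi_{G,G}^C$ then $\varpi(H(h)-C) \geq 0$ for at least one $\varpi \in \widehat\Delta$, and the defining condition of $\sigma_{P_1}^{P_2}$ singles out exactly which such $\varpi$'s are compatible with the cell indexed by $(P_1,P_2)$ (cf.\ Arthur \cite[Lemma 8.3]{Art05}). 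Translating these weight inequalities into root inequalities via the change-of-basis between $\widehat\Delta$ and $\Delta$, and transferring bounds between the components of the orthogonal decomposition \eqref{eq:a_decomp} using Lemma \ref{lemma:umg1'}, should yield \eqref{eq:bowwow2}; the choice $C = \log T \cdot \rho^\vee$ together with the positivity $\varpi_i(\rho^\vee) = i(n-i)/2 > 0$ ensures that the resulting bound scales linearly in $\log T$.

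Given \eqref{eq:bowwow2}, the rest is bookkeeping. Choose $\mu = m\sum_{\alpha\in\Delta^{P_2}-\Delta^{P_1}}\alpha$ with $m$ to be taken large; then $a(\delta g)^{-\mu} < T^{-m|\Delta^{P_2}-\Delta^{P_1}|}$ on the support. The residual factor $a(\delta g)^{\lambda+\kappa}$ is at most a fixed polynomial in $T$: the $\ga^{P_1}$-component of $H(\delta g)$ is $O(\log T)$ by Lemma \ref{lemma:umg1}, while its $\ga_{P_1}$-component is controlled by Lemma \ref{lemma:umg1'} just as in the proof of Lemma \ref{lemma:umg3}. Choosing $m$ large enough to dominate all the accumulated polynomial-in-$T$ factors (from $T^{p_0}$ and from the polynomial-in-$\mu$ implied constant in Lemma \ref{lemma:umg2}), and summing over the finitely many pairs $(P_1,P_2)$, produces a constant $c = c(p,\phi)$ for which the claimed bound holds. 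The degenerate pairs $P_1 = P_2 \neq G$, for which $\Delta^{P_2} - \Delta^{P_1} = \emptyset$ and Lemma \ref{lemma:umg2} affords no decay, are essentially handled for free: Lemma \ref{lemma:umg1'} applied with $P_1 = P_2$ forces $\mathrm{supp}\,\sigma_P^P$ to lie in a lower-dimensional subset of $\ga_P$, so these cells contribute on a measure-zero set and can be absorbed into the neighboring cells via the alternating-sum structure of $\phi_{P_1,P_2}$.
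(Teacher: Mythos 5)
Your architecture matches the paper's proof: you use \eqref{eq:arthur_t_defn2}, propose to combine Lemma \ref{lemma:umg2} with Lemma \ref{lemma:umg3}, and correctly single out \eqref{eq:bowwow2} as the key lever, with a sketch of its proof that is vague but in the right spirit (lower bounds from $\sigma_{P_1}^{P_2}$ via $\tau_{P_1}^{P_2}$, upper/lower bounds from $F^{P_1}$, the choice $C=\log T\cdot\rho^\vee$).

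The gap is in the final bookkeeping, which rests on a false claim. You assert that ``the residual factor $a(\delta g)^{\lambda+\kappa}$ is at most a fixed polynomial in $T$'' and then take $m$ large to beat that polynomial. On $\mathrm{supp}\,\chi_{P_1,P_2}^C$ only the $\ga^{P_1}$-component $H_0^1$ of $H(\delta g)$ is $O(\log T)$ (Lemma \ref{lemma:umg1}); the $\ga_{P_1}^{P_2}$-component $H_1^2$ is \emph{unbounded} --- Lemma \ref{lemma:umg1'} controls $\|H_2\|$ by $\|H_1^2\|$, not $H_1^2$ by a constant --- so $a(\delta g)^{\lambda+\kappa}$ blows up along the cell. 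The actual argument has a different shape: one must show directly that $(\lambda+\kappa-\mu)\bigl(H(\delta g)\bigr)\le -p\log T$ \emph{uniformly in $H_1^2$}. The slack from $H_0^1$ is $O(\log T)$; the unbounded pieces, namely $\nu_1^2(H_1^2)$ and $\nu_2(H_2)\ll\sum_\alpha\alpha(H_1^2)+O(\log T)$ (the transfer via Lemma \ref{lemma:umg1'}), must each be absorbed by the negative contribution $-\sum_\alpha m'_\alpha\alpha(H_1^2)$, which by \eqref{eq:bowwow2} is both $\lesssim-m'_\alpha\log T$ and grows linearly in $\alpha(H_1^2)$. It is this comparison of growth against growth, not decay against a polynomial in $T$, that makes the choice of large $m$ close the argument; the paper carries this out explicitly. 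Separately, your ``measure-zero absorption'' dispatch of the cells $P_1=P_2\neq G$ is not a valid mechanism for a pointwise bound: the supports of the $\chi_{P_1,P_2}^C(\delta\cdot)$ tile the space and the summands in \eqref{eq:arthur_t_defn2} do not cancel across distinct $(P_1,P_2)$. For those degenerate cells the correct consequence of Lemma \ref{lemma:umg1'} (with $\ga_{P_1}^{P_2}=0$) is that $H_{P_1}(\delta g)$ is pinned to $C$, so $H(\delta g)$ lies in a box of diameter $O(\log T)$; this gives a polynomial-in-$T$ bound, not yet $T^{-p}$, so that case would require an additional observation rather than the one you offer.
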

\begin{proof}
In the light of \eqref{eq:arthur_t_defn2} and Lemma \ref{lemma:umg3}, it suffices to prove, for each $P_1 \subseteq P_2$, $P_1 \neq G$, and for $g$ contained in the support of $\chi_{P_1,P_2}^C$, that
\begin{equation*}
|\phi_{P_1,P_2}(g)| \ll T^{-p}a(g)^{-\kappa}
\end{equation*}
for any choice of $p>0$ and a certain $\kappa \in \mathrm{span}(\Delta^{P_2}-\Delta^{P_1})$.

On the other hand, we already know from Lemma \ref{lemma:umg2} that
\begin{equation*}
|\phi_{P_1,P_2}(g)| \ll a(g)^{\lambda-\mu}
\end{equation*}
holds for some $\lambda \in \ga^*$ and any $\mu \in \mathrm{span}(\Delta^{P_2} - \Delta^{P_1})$.
Write
\begin{align*}
H(g) &= H_0^2 + H_2, \ H_0^2 = H_0^1 + H_1^2, \ & H_i^j \in \ga_{P_i}^{P_j}, \\
\lambda-\mu &= \nu_0^2 + \nu_2, \ \nu_0^2 = \nu_0^1 + \nu_1^2, \ & \nu_i^j \in (\ga_{P_i}^{P_j})^*.
\end{align*}
Our goal is then to show that, for an appropriate choice of $\mu$,
\begin{equation*}
(\lambda-\mu)(H(g)) = \nu_0^1(H_0^1) + \nu_1^2(H_1^2) + \nu_2(H_2)
\end{equation*}
is bounded from above by $-p\log T$. Note that there is no loss of generality in assuming $\kappa=0$, by replacing $\mu$ with $\mu - \kappa$.


Fix an $\alpha = \alpha_i \in \Delta^{P_2}-\Delta^{P_1}$, and suppose the Levi block of $P_1$ containing the $i$-th row has blocksize $n_1$, and the block containing the $(i+1)$-st row has blocksize $n_2$. We claim that
\begin{equation}\label{eq:bowwow}
\alpha(H_0^2) > \frac{1}{2}\log T + \frac{1}{n_1+n_2}\alpha(H_1^2).
\end{equation}
This follows from the additional claims that
\begin{equation} \label{eq:bowwow2}
\alpha(H_0^1) > \left(1-\frac{n_1+n_2}{2}\right)\log T, \ \alpha(H_1^2) > \left(\frac{n_1+n_2}{2}\right)\log T,
\end{equation}
by adding the former and $(1-(n_1+n_2)^{-1})$ times the latter.
To see the former inequality, observe that $F^{P_1}(g,C) = 1$ implies
\begin{equation*}
\varpi(H_0^1) < \varpi(C) = \log T \cdot \varpi((\rho^\vee)^{P_1})
\end{equation*}
for all $\varpi \in \widehat\Delta_0^{P_1}$. $(\rho^\vee)^{P_1}$ here can be computed explicitly: if $P_1$ consists of the Levi blocks of size $k_1,\ldots,k_r$, then
\begin{equation*}
(\rho^\vee)^{P_1} = \left(\frac{k_1-1}{2}, \ldots, \frac{1-k_1}{2}; \ldots ; \frac{k_r-1}{2}, \ldots, \frac{1-k_r}{2} \right).
\end{equation*}
This implies that the $i$-th entry of $H_0^1$ is greater than $-1/2 \cdot (n_1-1)\log T$, and the $(i+1)$-st entry of $H_0^1$ is less than $1/2 \cdot (n_2-1)\log T$, hence the desired inequality. Similarly, the latter inequality of \eqref{eq:bowwow2} follows from our assumption $\sigma_{P_1}^{P_2}(H_{P_1}(g) - C) = 1$, which implies that $\tau_{P_1}^{P_2}(H_1^2 - C) = 1$ and thus
\begin{equation*}
\alpha(H_1^2) > \alpha(C_{P_1}^{P_2}) = \log T \cdot \alpha((\rho^\vee)_{P_1}^{P_2}).
\end{equation*}
It can be computed that $\alpha((\rho^\vee)_{P_1}^{P_2}) = 1/2 \cdot (n_1 + n_2)\log T$, proving \eqref{eq:bowwow2} in full.

To complete the proof of the theorem, let us start by writing
\begin{align*}
\lambda_0^2 := \lambda_{P_0}^{P_2} = \lambda' + \sum_{\alpha \in \Delta^{P_2} - \Delta^{P_1}} l_\alpha \alpha
\end{align*}
for some $\lambda' \in \ga^{P_1}$ and $l_\alpha$'s --- in fact, they are uniquely determined --- and
\begin{align*}
\mu = \sum_{\alpha \in \Delta^{P_2}-\Delta^{P_1}} m_\alpha \alpha,
\end{align*}
with $m_\alpha$'s to be determined soon. Take $m_\alpha$'s large enough so that $m'_\alpha := m_\alpha - l_\alpha \geq 0$.
Then by \eqref{eq:bowwow}, there exists a constant $c(P_1,P_2)>0$ such that
\begin{align*}
\nu_0^2(H_0^2) &= \lambda_0^2(H_0^2) - \mu(H_0^2) \\
&< \lambda'(H_0^1) -\frac{1}{2}\sum_{\alpha \in \Delta^{P_2} - \Delta^{P_1}} m'_\alpha\log T - c(P_1,P_2)\sum_{\alpha \in \Delta^{P_2} - \Delta^{P_1}} m'_\alpha\alpha(H_1^2) \\
&< O_\lambda(\log T) -\frac{1}{2}\sum_{\alpha \in \Delta^{P_2} - \Delta^{P_1}} m'_\alpha\log T - c(P_1,P_2)\sum_{\alpha \in \Delta^{P_2} - \Delta^{P_1}} m'_\alpha\alpha(H_1^2).
\end{align*}
The last inequality here is a consequence of Lemma \ref{lemma:umg1}.
It remains to show that, by increasing $m_\alpha$'s if necessary, 
\begin{equation*}
c(P_1,P_2)\sum_{\alpha \in \Delta^{P_2} - \Delta^{P_1}} m'_\alpha\alpha(H_1^2) + O(\log T) > \nu_2(H_2).
\end{equation*}
Since $\nu_2$ is a linear functional, we have $|\nu_2(H_2)| \ll \|H_2\|$. Also, by Lemma \ref{lemma:umg1'} we have $\|H_2-C_2\| \ll \|H_1^2-C_1^2\|$, which implies that $\|H_2\| \ll \|H_1^2\|+O(\log T)$ by the triangle inequality.
On the other hand, $\Delta^{P_2} - \Delta^{P_1}$ induces a coordinate system on $\ga_{P_1}^{P_2}$, so $\|H_1^2\| \ll \sum_{\alpha \in \Delta^{P_2} - \Delta^{P_1}} |\alpha(H_1^2)|$; but since all $\alpha(H_1^2) > 0$ by \eqref{eq:bowwow2}, we can take the absolute value sign off the latter. Summarizing, we obtain
\begin{equation*}
|\nu_2(H_2)| \ll \sum_{\alpha \in \Delta^{P_2} - \Delta^{P_1}} \alpha(H_1^2) + O(\log T),
\end{equation*}
as desired.
This completes the proof.
\end{proof}

\begin{remark}
In case $\phi(g) = E_P(s,g)$ with $\re s > n$, note that the dependence of the constant $c$ in the statement of Theorem \ref{thm:truncate} on $s$ is at most by a polynomial factor in $s$, since in our argument the dependence of $c$ on $\phi$ comes directly from Lemma \ref{lemma:umg2}, in which we already observed in the remark following its proof that the implicit constant there has that property. The analogous statement applies in case $\phi(g) = E_B(\lambda(s)+\varepsilon\rho,g)$.
\end{remark}


\ignore{
We would also like to show a somewhat stronger statement than Proposition \ref{prop:umg}. For $i=1,\ldots,n-1$, denote by $P_i$ the maximal standard parabolic subgroup of $G$ corresponding to the flag $0 \subseteq E_k = \mathrm{span}(\ve_{n-k+1},\ldots,\ve_n) \subseteq \R^n$. Define, for a locally bounded function $\phi$ on $\Gamma \backslash G$,
\begin{equation*}
\sigma_i\phi = \phi - \phi_{P_i}.
\end{equation*}
It can be checked that $\sigma_i^2 = \sigma_i$ and $\sigma_i\sigma_j=\sigma_j\sigma_i$. Also define $\sigma \phi(g) = \sigma_1 \cdots \sigma_{n-1}\phi(g)$. It holds that
\begin{equation*}
\sigma\phi(g) = \sum_{P \supseteq P_0} (-1)^{r(P)} \phi_P(g),
\end{equation*}
and that $\sigma\phi(g)=\Lambda^C\phi(g)$,
{\color{red} $\leftarrow$ this isn't obvious and I want to get around it. See Arthur, ``Intro to trace formula'' Prop 13.2}
provided $\alpha_i(H(g))$ is sufficiently large for any $i$.

\begin{proposition}\label{prop:unif_decay}
Let $\re s = c > n$. Then $\sigma E(s,g)$ (and thus $\Lambda^CE(s,g)$ too) decays almost uniformly in the sense that, for any Schwartz function $h:\R\rightarrow\R$, $h(|s|)\sigma E(s,g)$ decays at a rate independent of $\im s$.
\end{proposition}
\begin{proof}
Our proof is in two steps. First, we show that $\sigma E(s,g)$ is controlled by $E(c,g)$ in a certain sense. Then we show that $E(c,g)$ is of exponential decay, completing the proof. Much of the proof follows \cite[Appendix A]{GMP17} nearly verbatim, except that we need to make some minor changes so that their argument carries over to the Eisenstein series. For brevity, we will only point out what modifications are needed.

We fix a basis $\mathcal B$ of $\mathrm{Lie}(G)$, as in \cite[(A.7)]{GMP17}.
Let us write $\alpha_i \in \Delta$ for the simple root corresponding to $P_i$. Then, by replicating the argument of \cite[A.3.3]{GMP17}, it can be shown that, for $g \in \mathfrak S(t)$ and a smooth function $\phi$ on $\Gamma \backslash G$,
\begin{equation*}
|\sigma_i\phi(g)| \ll_t a(g)^{-\alpha_i}\max_{\gamma \in \Gamma \cap N^P \backslash N^P \atop X \in \mathcal B} |X \cdot \phi(g)|.
\end{equation*}
Applying this to $E_P(s,g), \sigma_{n-1}E_P(s,g), \sigma_{n-2}\sigma_{n-1}E_P(s,g),$ and so on iteratively, and also using \eqref{eq:diff_comp}, we obtain
\begin{align*}
|\sigma E_P(s,g)| &\ll_t a(g)^{-\sum_i \alpha_i}\max_{\gamma \in \Gamma \cap N \backslash N \atop X_1,\ldots,X_{n-1} \in \mathcal B} |X_1 \cdots X_{n-1} \cdot E_P(s,g)| \\
&\leq p(|s|)a(g)^{-\sum_i \alpha_i}\max_{\gamma \in \Gamma \cap N \backslash N \atop X_1,\ldots,X_{n-1} \in \mathcal B} |X_1 \cdots X_{n-1} \cdot E_P(c,g)|,
\end{align*}
for a polynomial $p$ of degree $n-1$.

{\color{red} modify this, and fuse with below to complete the proof. also, we can and should be specific about what $p$ is. ideally, I want (almost) uniform exponential decay, but that may not be so easy

See Arthur, ``Intro to trace formula'' Prop 13.2

}

We would now like to bound the size of $X_1 \cdots X_{n-1}E_P(c,g)$. It suffices to bound $E_P(c,g)$, since $X_1 \cdots X_{n-1}E_P(c,g)$ is a constant multiple of it. More generally, let $\phi$ be a $K$-invariant automorphic form on $\Gamma \backslash G$ that is of moderate growth of exponent $\lambda$.
By \cite[Proposition A.2.4]{GMP17}, there exists a function $b(g) \in C^\infty_c(G)$ (called $f_c$ in \cite{GMP17}), supported on a compact set $C \subseteq G$, satisfying
\[b * \phi = \phi\]
and
\[\|X\cdot b\|_1 \leq (\gamma N)^{\gamma N}\]
for all $X \in U(\mathrm{Lie}(G))$ of degree $N$, where $\gamma>0$ depends only on $G$.\footnote{There seems to be a typo in the statement of \cite[Proposition A.2.3]{GMP17}: ``for all $X \in \mathcal B$'' should be ``for all $X \in \mathfrak U$ of degree $N$.''} Therefore
\begin{equation*}
X \cdot \phi = X \cdot (b * \phi) = (X \cdot b) * \phi,
\end{equation*}
and, for all $g \in \mathfrak S(t)$,
\begin{align*}
|((X \cdot b) * \phi)(g)| &\leq \|X\cdot b\|_1 \|\phi(g)\mathbf{1}_{gC}\|_\infty \\
&\ll_{\phi,t} \|X\cdot b\|_1 \sup_{a \in C_A} a^\lambda \cdot a(g)^\lambda \\
&\ll_{\phi} (\gamma N)^{\gamma N}a(g)^\lambda,
\end{align*}
where $C_A \subseteq A$ is as in \cite[Lemma 8.3.4]{Gar18} --- see also the subsequent computation there, which is very similar to ours above. Using this inequality in place of \cite[Proposition A.2.5]{GMP17} in the proof of \cite[Theorem A.3.1]{GMP17}, we can show that there exists a ......

\end{proof}
} 

\section{The truncated $L^1$-norm} \label{sec:l1}

\subsection{Statements of results}

The main goal of this section is to prove the following truncated version of the Siegel integral formula.
\begin{theorem} \label{thm:L1-norm}
Choose a parameter $0 < \eta < 1$, and let $f:\R_{\geq 0} \rightarrow \R$ be a smooth function with compact support. Then
\begin{equation*}
\int_{\Gamma\backslash G} \mathrm H^C E_f(g)dg = \int_{\Gamma\backslash G} \Lambda^C E_f(g)dg,
\end{equation*}
and
\begin{align*}
&\int_{\Gamma\backslash G} \Lambda^C E_f(g)dg  \\
&= Mf(n)\xi(n,k)\vol^C(\Gamma\backslash G) + O_{n,f,\eta}\left(T^{-\frac{\eta k(n-k)}{2}}\right),
\end{align*}
where 
\begin{align*}
\xi(n,k)=\frac{\xi(2)\cdots\xi(n-k)}{\xi(k+1)\cdots\xi(n)}
\end{align*}
and $\vol^C(\Gamma\backslash G)$ is the volume of the truncated fundamental domain $\mathrm{supp}\,\chi^C_{G,G}$.
\end{theorem}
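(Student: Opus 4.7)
The first equality we would obtain as a formal consequence of two facts: the self-adjointness of $\Lambda^C$ on $L^2(\Gamma\backslash G)$, and the classical Langlands--Arthur identity $\Lambda^C 1 = \chi^C_{G,G}$, which in turn follows from the definition \eqref{eq:arthur_t} by the combinatorial identity $\sum_{P}(-1)^{r(P)}\sum_\delta \widehat\tau_P(H_P(\delta g)-C) = F^G(g,C)$. Combining these,
\[
\int_{\Gamma\backslash G}\Lambda^C E_f\, dg \;=\; \int_{\Gamma\backslash G}E_f\cdot \Lambda^C 1\, dg \;=\; \int_{\Gamma\backslash G}E_f\cdot\chi^C_{G,G}\, dg \;=\; \int_{\Gamma\backslash G}\mathrm H^C E_f\, dg.
\]

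For the asymptotic formula, the plan is Mellin inversion followed by a contour shift. Writing $E_f(g)=(2\pi i)^{-1}\!\int_{\re s=c} Mf(s)E_P(s,g)\,ds$ for some $c>n$, Fubini --- legitimate by Proposition \ref{prop:uni_gro'} together with the rapid decay of $Mf(s)$ in $\im s$ (as $f\in C^\infty_0$) --- would give
\[
\int_{\Gamma\backslash G}\Lambda^C E_f\, dg \;=\; \frac{1}{2\pi i}\int_{\re s=c} Mf(s)\,I(s)\,ds, \qquad I(s):=\int_{\Gamma\backslash G}\Lambda^C E_P(s,g)\,dg.
\]
Shifting the contour leftward from $\re s = c$ to $\re s = n-\eta$ crosses only the simple pole of $I(s)$ at $s=n$, whose residue is $\xi(n,k)\cdot \vol^C(\Gamma\backslash G)$ --- using that $\mathrm{Res}_{s=n}E_P(s,g) = \xi(n,k)$ is a constant function of $g$ (by the Langlands constant-term calculation) together with $\int \Lambda^C 1\,dg = \vol^C$. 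This supplies the main term $Mf(n)\,\xi(n,k)\,\vol^C(\Gamma\backslash G)$.

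The remaining task is to bound the shifted integral $(2\pi i)^{-1}\!\int_{\re s=n-\eta}Mf(s)I(s)\,ds$ by $O(T^{-\eta k(n-k)/2})$. For this I would apply the Maass--Selberg formula of Section \ref{sec:maass} to $\Lambda^C E_B(\lambda(s),g) = \Lambda^C E_P(s,g)$ (via Proposition \ref{prop:epeb}) paired against the constant $1 = E_B(-\rho,g)$; the vanishing $c(-1)=0$ collapses the double Weyl sum to a single one over $W$, giving $I(s) = \sum_{w\in W} T^{\langle\rho^\vee,w(\lambda(s)+\rho)\rangle}\cdot M(w,\lambda(s))/\prod_{\alpha\in\Delta}\langle\alpha^\vee,w(\lambda(s)+\rho)\rangle$, à la Miller \cite{Mil20}. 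The trouble is that $\lambda(s)+\rho = (0,\ldots,0;-s,\ldots,-s)$ lies on a deeply degenerate wall --- so most denominators $\langle\alpha^\vee,w(\lambda(s)+\rho)\rangle$ vanish identically --- and the formal expression must be regularized via the perturbation $\lambda(s) \mapsto \lambda(s) + \varepsilon\rho$ in the limit $\varepsilon\to 0$.

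The hard part --- which is precisely why Theorem \ref{thm:L1-norm} warrants its own section as a warm-up for Theorem \ref{thm:intro_main} --- is then to track the resulting cancellations. Each individual Weyl-term carries a factor $T^{-sS_w}$ with $S_w = \sum_{i\in w(\{n-k+1,\ldots,n\})}\rho^\vee_i \in [-k(n-k)/2,\,k(n-k)/2]$, whose real part on $\re s = n-\eta$ ranges up to $(n-\eta)k(n-k)/2$ in magnitude. After regularization, the cancellations collapse this apparent growth so that the net $T$-exponent of the regularized sum on the shifted line becomes exactly $-\eta k(n-k)/2$ --- reflecting the fact that the extremal $S_w = -k(n-k)/2$, scaled by the shift $\eta$ from the pole, controls the leading surviving contribution. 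Standard polynomial bounds for $M(w,\lambda(s))$ (which is a product of $\xi$-quotients, readily controlled on vertical lines) together with the rapid decay of $Mf(s)$ in $\im s$ then justify term-by-term integration and yield the stated error.
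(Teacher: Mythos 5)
The first equality and the overall plan (Mellin inversion, contour shift, Maass--Selberg) match the paper's, but your treatment of the main term and the error term each deserve a separate comment.

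For the main term, your route is genuinely different from, and cleaner than, the paper's. The paper perturbs $\lambda(s)\mapsto\lambda(s)+\varepsilon\rho$, applies Maass--Selberg, and uses the combinatorial Proposition~\ref{prop:chicken} to show that the sum of the $O(\varepsilon)$-spread residues converges to $Mf(n)\xi(n,k)\vol^C$. You instead note that $\mathrm{Res}_{s=n}E_P(s,g)$ is the \emph{constant} $\xi(n,k)$, that $\Lambda^C$ commutes with taking this residue, and hence $\mathrm{Res}_{s=n}I(s)=\xi(n,k)\int\Lambda^C1\,dg=\xi(n,k)\vol^C(\Gamma\backslash G)$. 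This bypasses Proposition~\ref{prop:chicken} entirely; the paper's longer route is presumably chosen because it sets up the machinery needed for the much harder double-contour argument of Theorem~\ref{thm:res_conc}, where the residues really do spread out over many $\varepsilon$-dependent locations and no such shortcut is available. Your route does rely on the (true but not established in the paper) constancy of the residue, so a citation or proof is needed.

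For the error term your proposal has two real problems. First, the Maass--Selberg expression you wrote carries the exponent $\langle\rho^\vee,w(\lambda(s)+\rho)\rangle$ and denominators $\prod\langle\alpha^\vee,w(\lambda(s)+\rho)\rangle$, whereas pairing $E_B(\lambda(s),\cdot)$ against $1=E_B(-\rho,\cdot)$ in the paper's convention collapses the $w_2$-sum to $w_2=\mathrm{id}$, yielding exponent $\langle\rho^\vee,w\lambda(s)-\rho\rangle$ and denominator $\prod\langle\alpha^\vee,w\lambda(s)-\rho\rangle$. (If you meant to use the shifted normalization $\Lambda=\lambda+\rho$, you should say so; as written the formula looks like a sign error.) Second, and more seriously, the claim that ``after regularization the net $T$-exponent becomes exactly $-\eta k(n-k)/2$'' is asserted, not proved. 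What actually has to be shown --- and what the paper carries out in the paragraph following Proposition~\ref{prop:chicken} --- is that (a) every term with $\langle\alpha^\vee_j,w\lambda(s)-\rho\rangle=O(\varepsilon)$ is offset by a zero $c(-1+O(\varepsilon))$ coming from $M(w,\tilde\lambda)$, so the $\varepsilon\to0$ limit is finite, and (b) among the $w$ with $M(w,\lambda(s))\not\equiv0$ (i.e.\ those preserving order within each block), the maximal $\langle\rho^\vee,w\lambda(n-\eta)-\rho\rangle$ is attained at the block-reversing $w$ with $w\lambda(n-\eta)=\rho+(0,\dots,0;\eta,\dots,\eta)$, which gives exactly $-\eta k(n-k)/2$. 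Your observation that $S_w\in[-k(n-k)/2,k(n-k)/2]$ bounds the untruncated range but does not identify the extremal \emph{surviving} $w$; and in your (incorrect) normalization the resulting exponent $-(n-\eta)S_w$ at the extremal $S_w$ would even be positive. The cancellation/maximization step is the actual content of the error bound, and it is missing.
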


The error term above is given precisely by \eqref{eq:L1error} below, which for certain functions of interest take the form
\begin{equation*}
O_n\left(Mf(n-\eta)T^{-\frac{\eta k(n-k)}{2}}\right)
\end{equation*}
for any $0 < \eta < 1$.

Theorem \ref{thm:L1-norm} is a slight refinement of the works of Miller \cite{Mil20} and parts of Thurman \cite{Thu24}, in that it provides a direct comparison with the volume of the truncated domain. For comparison, we also show the following, which is likely not new but we were unable to find a reference for.
\begin{proposition} \label{prop:tr_fund_vol}
$\vol(\G\backslash G) - \vol^C(\G\backslash G) = O(T^{-n(n-1)/2}).$
\end{proposition}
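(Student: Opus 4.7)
The plan is to evaluate $\int_{\Gamma\backslash G}\Lambda^C 1\,dg$ directly from the first form \eqref{eq:arthur_t} of the Arthur truncation, isolating $\vol(\Gamma\backslash G)$ from the $P = G$ term and controlling each $P \neq G$ term by an explicit Haar-measure computation. Applying \eqref{eq:arthur_t} with $\phi \equiv 1$ (so $(1)_P \equiv 1$) and unfolding the sum over $\delta \in \Gamma\cap P \backslash \Gamma$,
\[
\int_{\Gamma\backslash G}\Lambda^C 1\,dg \;=\; \sum_{P\supseteq P_0}(-1)^{r(P)}\int_{(\Gamma\cap P)\backslash G}\widehat\tau_P\bigl(H_P(g)-C\bigr)\,dg.
\]
The $P = G$ term contributes $\vol(\Gamma\backslash G)$ since $\widehat\tau_G \equiv 1$, reducing the proposition to showing that every $P \neq G$ term has magnitude $O(T^{-n(n-1)/2})$.

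For $P \neq G$, I will use the $P$-Langlands decomposition $g = n_P m_P a_P k$ with Haar measure $dg = dn_P\,dm_P\,(a_P)^{-2\rho_P}\,da_P\,dk$. The integrand $\widehat\tau_P(H_P(g)-C)$ depends only on $a_P$, so the $N_P$, $M_P^{(1)}$, and $K$ integrations contribute a finite constant (using $\vol((\Gamma\cap N_P)\backslash N_P) = 1$ and the finiteness of $\vol((\Gamma\cap M_P^{(1)})\backslash M_P^{(1)})$ as a product of volumes $\vol(\SL(k_i,\Z)\backslash \SL(k_i,\R))$). What remains is $\int_{\ga_P}\widehat\tau_P(H_P-C)\,e^{-2\rho_P(H_P)}\,dH_P$, which converges because $2\rho_P$ expands in $\widehat\Delta_P$ with strictly positive coefficients, ensuring exponential decay throughout the obtuse chamber.

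For a maximal parabolic $P = P(k)$, $\ga_P$ is one-dimensional, generated by $X_k := (1,\ldots,1,-\tfrac{n-k}{k},\ldots,-\tfrac{n-k}{k})$; a direct calculation gives $2\rho(X_k) = n(n-k)$ and $\varpi_{n-k}(X_k) = n-k$, whereas $\varpi_{n-k}(C) = \log T \cdot k(n-k)/2$. The integral therefore simplifies to
\[
\int_{a \geq k\log T/2} e^{-a n(n-k)}\,da \;=\; \frac{1}{n(n-k)}\,T^{-nk(n-k)/2}.
\]
The exponent $nk(n-k)/2$ is minimized at $k = 1$ and $k = n-1$, where it takes the value $n(n-1)/2$; for intermediate $k$ and for non-maximal $P$ (whose contribution splits as a product of analogous one-dimensional tail integrals over each coordinate of $\ga_P$), the resulting exponent is strictly larger. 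Summing via the triangle inequality then yields the announced bound. The main (mild) obstacle is a clean accounting for non-maximal $P$: one must verify that pushing $H_P$ into the obtuse chamber along all coordinates simultaneously only produces smaller contributions, which reduces to the observation that each additional coordinate introduces an additional exponential factor $T^{-c}$ with $c > 0$.
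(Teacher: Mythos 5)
Your approach is genuinely different from the paper's. The paper computes $\vol^C(\G\backslash G)$ from the Maass-Selberg relation for $E_B(-\tilde\rho,\cdot)$ paired against itself --- see \eqref{eq:trunc_domain} --- and isolates the ``permissible'' Weyl elements that contribute nontrivially as $\varepsilon\to0$, identifying the largest nontrivial $T$-degree $\langle\rho^\vee,-w\tilde\rho-\rho\rangle$ as $-n(n-1)/2$. Your route instead unfolds the definition \eqref{eq:arthur_t} of $\Lambda^C1$ term by term and uses the $P$-Langlands form of the Haar measure directly; this is more elementary and works cleanly here precisely because $(1)_P\equiv 1$ trivializes all constant terms. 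Your bookkeeping for the maximal parabolics is correct: $2\rho(X_k)=n(n-k)$ and $\varpi_{n-k}(C)=\tfrac12k(n-k)\log T$, giving a $P(k)$-contribution of order $T^{-nk(n-k)/2}$, which over $1\le k\le n-1$ is minimized at $k\in\{1,n-1\}$ with value $T^{-n(n-1)/2}$; and since $P(1)$ and $P(n-1)$ both carry the sign $(-1)^1$, their contributions add rather than cancel, so the bound is also sharp, recovering the closing remark of the paper's proof.

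The one genuine gap is the treatment of non-maximal $P$. As stated, ``each additional coordinate introduces an additional exponential factor $T^{-c}$'' is not a proof, because the dual basis $\{e_\alpha\}$ of $\widehat\Delta_P$ in $\ga_P$ is $P$-dependent, so the one-dimensional tail factors for distinct parabolics cannot be compared term by term. The clean fix is to note that the $T$-exponent of the $P$-term equals $2\rho\bigl((\rho^\vee)_P\bigr)$, where $(\rho^\vee)_P$ is the $\ga_P$-component of $\rho^\vee$: indeed $\sum_\alpha\varpi_\alpha(\rho^\vee)e_\alpha=(\rho^\vee)_P$, since each $\varpi_\alpha$ with $\alpha\in\Delta-\Delta^P$ vanishes on $\ga^P$. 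Using the orthogonality of $\ga=\ga^P\oplus\ga_P$ noted in Section 2.2, together with the fact that for $\SL(n)$ the coordinate vectors of $\rho$ and $\rho^\vee$ coincide, one gets $2\rho\bigl((\rho^\vee)_P\bigr)=2\|\rho^\vee\|^2-2\|(\rho^\vee)^P\|^2$, which decreases monotonically as $\ga^P$ enlarges, i.e.\ as $P$ grows. Therefore every proper $P$ has exponent at least that of some maximal $P'\supseteq P$, hence at least $n(n-1)/2$. With this inserted, your argument is complete and reaches the same bound as the paper by a shorter and arguably more transparent path.
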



\subsection{Proof of Theorem \ref{thm:L1-norm}}

The former statement is clear, since
\begin{align*}
\int_{\Gamma\backslash G} \Lambda^C E_f(g)dg = \int_{\Gamma\backslash G} E_f(g) \Lambda^C1dg = \int_{\Gamma\backslash G} \mathrm H^C E_f(g)dg.
\end{align*}

We now give a proof of the latter statement.
Write $\lambda = \lambda(s)$ as defined in \eqref{eq:lambda}, and $\tilde\lambda = \lambda + \varepsilon\rho$ for a small $\varepsilon > 0$, a ``perturbation'' of $\lambda$. With an appropriate adaptation of Proposition \ref{prop:formulation} below,\footnote{It is in fact an overkill, since $\int \Lambda^CE_f$ is actually an integral over a compact set.} and the Maass-Selberg relation, one has
\begin{align}
&\int_{\G\backslash G} \Lambda^CE_f(g)dg \label{eq:L1goal} \\
&= \lim_{\varepsilon\rightarrow0} \frac{1}{2\pi i}\int_{\re s = c} Mf(s)\sum_{w \in W} \frac{e^{\langle C, w\tilde\lambda-\rho\rangle}}{\prod_{\alpha \in \Delta} \langle\alpha^\vee,w\tilde\lambda-\rho\rangle}M(w,\tilde\lambda)ds \notag
\end{align}
for any $c>n$. 
The reason we perturb $\lambda$ is to avoid the possibility that $\langle\alpha^\vee,w\tilde\lambda-\rho\rangle=0$, which makes interpreting the expression difficult.

We will perform a residue calculus on the above expression, starting by setting $c=n+1/2$ and shifting the contour leftward to $c=n-\eta$ for any fixed $0 < \eta < 1$, picking up all the residue on the way. For this maneuver to be legitimate, the integrand need to vanish as $|\im s| \rightarrow \infty$ on the vertical strip $n-\eta \leq \re s \leq n+1/2$. The lemma below ensures that this is indeed the case.

\begin{lemma}\label{lemma:misc_growth}
Continue with the assumptions above. Then on the strip $n-\eta \leq \re s < n+1$, $Mf(s)$ is holomorphic and of rapid decay, and $M(w, \tilde\lambda)$ grows at most polynomially in $\im s$ --- in fact, it is of order $O_n(|\im s|^{0.501\eta+O(\varepsilon)})$.
\end{lemma}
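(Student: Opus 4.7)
The plan is to handle the two assertions separately, with most of the work going into the bound on $M(w,\tilde\lambda)$.

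The holomorphy and rapid decay of $Mf(s)$ on the strip are routine: since $f \in C^\infty_c(\R_{\geq 0})$, the integral $Mf(s) = \int_0^\infty f(x)x^{s-1}dx$ converges for every $s \in \C$ and defines an entire function, and iterated integration by parts (moving derivatives onto $f$) gives the standard bound $Mf(s) \ll_{f,N} (1+|\im s|)^{-N}$ for every $N > 0$, uniformly on the strip.

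For $M(w,\tilde\lambda) = \prod_{\alpha > 0,\, w\alpha < 0} c(\langle\alpha^\vee, \tilde\lambda\rangle)$, I would first compute the arguments explicitly. For $\alpha = \alpha_{ij}$ with $i < j$, using the identity $\rho_i - \rho_j = j - i$, one finds: intra-block roots (both $i,j \leq n-k$ or both $> n-k$) give $\langle\alpha^\vee_{ij}, \tilde\lambda\rangle = (j-i)(1+\varepsilon)$, independent of $s$; and cross-block roots ($i \leq n-k < j$) give $\langle\alpha^\vee_{ij}, \tilde\lambda\rangle = s - d(1-\varepsilon)$ with $d = j-i \in \{1,\ldots,n-1\}$.

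Next, I would bound $|c(w)|$ on vertical lines using Stirling for the Gamma ratio and the convexity bound for $\zeta$. Writing $c(w) = \pi^{-1/2}\,\Gamma(w/2)/\Gamma((w+1)/2)\,\zeta(w)/\zeta(w+1)$, Stirling gives $|\Gamma(w/2)/\Gamma((w+1)/2)| \ll |\im w|^{-1/2}$, convexity gives $|\zeta(w)| \ll |\im w|^{\max(0,(1-\re w)/2)+\epsilon}$, and $1/\zeta(w+1)$ is at worst polylog for $\re(w+1) \geq 1$; combining yields $|c(w)| \ll |\im w|^{-1/2 + \max(0,(1-\re w)/2) + \epsilon}$. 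I would then apply this factor by factor: intra-block factors $c((j-i)(1+\varepsilon))$ are $O_\varepsilon(1)$ (the perturbation $\varepsilon > 0$ being crucial to avoid the pole of $c$ at $1$ when $j-i=1$); cross-block factors with $d \leq n-2$ have $\re w \geq 2 - \eta + O(\varepsilon) > 1$ throughout the strip, giving $O(|\im s|^{-1/2})$; and the sole factor with $d = n-1$, namely $\alpha_{1n}$, has $\re w = \re s - (n-1) + O(\varepsilon) \geq 1 - \eta + O(\varepsilon)$, giving $O(|\im s|^{-1/2 + \eta/2 + \epsilon + O(\varepsilon)})$. The product over at most $n(n-1)/2$ such factors is therefore polynomial in $|\im s|$, and the stated bound $O_n(|\im s|^{0.501\eta + O(\varepsilon)})$ follows as a loose upper bound (indeed the product actually decays in $|\im s|$ once at least one cross factor is present).

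The main obstacle, though modest, is the bookkeeping around the perturbation: $\varepsilon > 0$ is needed to keep the intra-block factors $c((j-i)(1+\varepsilon))$ away from the pole of $c$ at $1$, while $\varepsilon$ must be small enough that $\re(s - d(1-\varepsilon))$ stays close to $\re s - d$, so that the convexity bound on the critical strip and the $1/\zeta(w+1)$ estimate on $\re(w+1) \geq 2-\eta+O(\varepsilon) > 1$ both go through uniformly. Once this balance is set, everything assembles routinely.
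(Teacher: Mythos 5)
Your proof follows essentially the same factor-by-factor route as the paper's: compute the arguments $\langle\alpha^\vee,\tilde\lambda\rangle$ explicitly, bound the $\zeta$-factors by convexity and a zero-free region for $1/\zeta$, and bound the $\Gamma$-ratios. The one genuine difference in technique is that you bound $\Gamma((w)/2)/\Gamma((w+1)/2)$ via Stirling, getting the decay $\ll |\im w|^{-1/2}$, whereas the paper instead uses the infinite-product identity $|\Gamma(\sigma+it)|^2=|\Gamma(\sigma)|^2\prod_k(1+t^2/(\sigma+k)^2)^{-1}$ to conclude only that the ratio is bounded by its value at $t=0$. Both suffice for the stated bound; yours is sharper and justifies your parenthetical remark that the product in fact decays once a cross-block factor appears.

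However, there is a sign error in your intra-block computation that is worth correcting. With $\tilde\lambda=\lambda+\varepsilon\rho$, $\lambda_i=-n+i-1$ (resp.\ $-s-n+i-1$) on the two blocks, and $\rho_i-\rho_j=j-i$, an intra-block root $\alpha_{ij}$ ($i<j$) gives
\[
\langle\alpha^\vee_{ij},\tilde\lambda\rangle=(\lambda_i-\lambda_j)+\varepsilon(\rho_i-\rho_j)=-(j-i)+\varepsilon(j-i)=-(j-i)(1-\varepsilon),
\]
not $(j-i)(1+\varepsilon)$. In particular, for $j-i=1$ the factor is $c(-1+\varepsilon)$, which is near the simple \emph{zero} of $c$ at $-1$ (so it is $O(\varepsilon)$, bounded with an absolute constant), not near the pole at $1$. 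Your reading would make that factor $c(1+\varepsilon)\asymp 1/\varepsilon$; taken literally, this gives a bound whose implied constant blows up as $\varepsilon\to 0$, which is incompatible with the $O_n(\cdot)$ in the lemma (and also with the fact, used elsewhere in the paper, that $\varepsilon$ may be taken of either sign). The conclusion ``intra-block factors are bounded'' is true, but your justification for it is backwards; with the correct sign no appeal to $\varepsilon\neq0$ is needed for these factors.

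Your treatment of the cross-block factors, including the observation that only $\alpha_{1n}$ (i.e.\ $d=n-1$) can push $\re w$ into the critical strip while all $d\le n-2$ give $\re w\ge 2-\eta+O(\varepsilon)>1$, matches the paper's.
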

\begin{remark}
As is made clear in the proof, the choice of the number $0.501$ is an arbitrary one, and it can be adjusted to be arbitrarily close to $1/2$ if necessary.
\end{remark}
\begin{proof}
The statement about $Mf$ is well-known; it follows immediately from the fact that $Mf(\sigma + it)$ is the Fourier transform of $f(e^x)e^{\sigma x}$ as a function of $t$. The claimed property in fact holds on the entire right half plane $\{z:\re z>0\}$.

As for $M(w, \tilde\lambda)$, it is a finite product of expressions of the form
\begin{equation*}
\frac{\xi(s+a)}{\xi(s+a+1)} = \pi^{1/2}\frac{\Gamma((s+a)/2)\zeta(s+a)}{\Gamma((s+a+1)/2)\zeta(s+a+1)},
\end{equation*}
where $a \in [-n+1,-1] + O(\varepsilon)$. Hence $\re (s+a) \in [1-\eta,n)+O(\varepsilon)$. 

Let us first consider the contributions form the zeta functions. It is well-known (e.g., \cite[Exercise 10.1.19e]{MV06}) that
\begin{align*}
|\zeta(\sigma+it)| \ll |t|^{\frac{1-\sigma}{2}+\iota}
\end{align*}
for any $0 \leq \sigma \leq 1$ and $\iota>0$; also, for $\sigma >1$, $|\zeta(\sigma+it)| \ll 1$ is trivial.
On the other hand, it is also known (e.g., Theorem 8.29 of \cite{IK04}) that
\begin{align*}
\frac{1}{\zeta(\sigma+it)} \ll (\log t)^\frac{2}{3}(\log\log t)^\frac{1}{3}
\end{align*}
for $\sigma \geq 1-c(\log t)^{-2/3}(\log\log t)^{-1/3}$ and $t \geq 3$, where $c>0$ is some absolute constant. Thus, on the said strip, $\zeta(s+a)/\zeta(s+a+1)$ is bounded by a polynomial in $\im s$. In fact, all but at most one factor of $M(w,\tilde\lambda)$ would have logarithmic growth in $\im s$, since $\langle \alpha^\vee,\tilde\lambda \rangle = s-n+1+O(\varepsilon)$ is possible if and only if $\alpha = \alpha_{1n}$, and in all other cases either $\langle \alpha^\vee,\tilde\lambda\rangle$ is constant with respect to $s$ or $\re\langle \alpha^\vee,\tilde\lambda\rangle >1$. Hence we can conclude that the product of the zeta factors are bounded by $O(|\im s|^{0.501\eta+O(\varepsilon)})$, say.

It remains to bound the gamma functions. From the product formula
\begin{align*}
\Gamma(z) = \frac{1}{z}\prod_{n=1}^\infty\frac{(1+1/n)^z}{1+z/n}
\end{align*}
of Euler, one can derive another well-known formula
\begin{equation*}
|\Gamma(\sigma + it)|^2 = |\Gamma(\sigma)|^2\prod_{k=0}^\infty\left(1+\frac{t^2}{(\sigma+k)^2}\right)^{-1}.
\end{equation*}
This implies, for $\sigma \geq 0$,
\begin{align*}
\frac{|\Gamma(\sigma+it)|^2}{|\Gamma(\sigma+1/2+it)|^2} &= \frac{|\Gamma(\sigma)|^2}{|\Gamma(\sigma+1/2)|^2}\prod_{k=0}^\infty \frac{\left(1+\frac{t^2}{(\sigma+1/2+k)^2}\right)}{\left(1+\frac{t^2}{(\sigma+k)^2}\right)} \\
& \leq \frac{|\Gamma(\sigma)|^2}{|\Gamma(\sigma+1/2)|^2}.
\end{align*}
Summing up everything so far, we deduce
\begin{equation*}
M(w,\tilde\lambda) = O_n(|\im s|^{0.501\eta+O(\varepsilon)}),
\end{equation*}
as desired.
\end{proof}

By inspection, it is clear that all residues of the right-hand side of \eqref{eq:L1goal} on the strip $\re s \in [n-\eta,n+1/2]$ occur at $s=n\pm a\varepsilon$ for $a \in \{0,1,\ldots,n-1\}$.
The proposition below deals with all of them at once.

\begin{proposition} \label{prop:chicken}
As $\varepsilon \rightarrow 0$,
\begin{equation} \label{eq:s1res}
\frac{1}{2\pi i}\mathrm{Res}_{s=n+O(\varepsilon)} Mf(s)\sum_{w \in W} \frac{e^{\langle C, w\tilde\lambda-\rho\rangle}}{\prod_{\alpha \in \Delta} \langle \alpha^\vee, w\tilde\lambda-\rho \rangle}M(w,\tilde\lambda)
\end{equation}
converges to 
\begin{equation*}
Mf(n)\xi(n,k) \vol^C(\Gamma\backslash G).
\end{equation*}
Here, $\mathrm{Res}_{s=n+O(\varepsilon)}$ denotes the sum of all $s$-residues at $n,n\pm\varepsilon,n\pm2\varepsilon\ldots,n\pm(n-1)\varepsilon$.
\end{proposition}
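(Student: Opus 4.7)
The plan is to evaluate the residue sum by recognizing the inner sum
\begin{equation*}
F(s,\varepsilon) := \sum_{w \in W} \frac{e^{\langle C, w\tilde\lambda-\rho\rangle}}{\prod_{\alpha \in \Delta} \langle \alpha^\vee, w\tilde\lambda-\rho \rangle}M(w,\tilde\lambda)
\end{equation*}
as the Maass-Selberg pairing of $\Lambda^C E_B(\tilde\lambda,g)$ with $\Lambda^C 1$. First I would apply the Maass-Selberg formula with $\lambda_1 = \tilde\lambda$ and $\lambda_2$ a small perturbation of $-\rho$, and then let that perturbation tend to $0$. The crucial observation is that $M(w_2, -\rho) = \prod_{\alpha>0,\, w_2\alpha<0} c(\langle\alpha^\vee,-\rho\rangle)$ vanishes whenever $w_2 \neq e$, since for any such $w_2$ the set $\{\alpha > 0 : w_2\alpha < 0\}$ contains at least one simple root $\alpha_i$ (take the last simple reflection in a reduced expression), and $c(\langle\alpha_i^\vee,-\rho\rangle) = c(-1) = 0$. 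After checking that this vanishing overcomes any simple-root denominator singularity in the corresponding terms (by tracking orders of zeros and poles, in the spirit of the combinatorial analysis of Section 6), only the $w_2 = e$ contribution survives, yielding
\begin{equation*}
F(s,\varepsilon) = \int_{\G \backslash G} \Lambda^C E_B(\tilde\lambda,g) \cdot \Lambda^C 1 \, dg = \int_{\G \backslash G} \Lambda^C E_B(\tilde\lambda,g) \, dg
\end{equation*}
by the self-adjointness and idempotency of $\Lambda^C$.

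Next I would study the limit $\varepsilon \to 0$. Since $\Lambda^C 1$ is supported on the compact set $\mathrm{supp}\,\chi^C_{G,G}$, and $E_B(\tilde\lambda,g) \to E_P(s,g)$ meromorphically as $\varepsilon \to 0$ by Proposition \ref{prop:epeb}, the estimate \eqref{eq:more2} lets me pass the limit inside the integral, yielding
\begin{equation*}
F(s,\varepsilon) \longrightarrow F(s,0) := \int_{\G \backslash G} \Lambda^C E_P(s,g) \, dg
\end{equation*}
uniformly on compact subsets of $\{s : \re s > n - \eta\}$ away from $s = n$. The only pole of $F(s,0)$ in a neighborhood of $s=n$ is at $s=n$ itself, inherited from the simple pole of $E_P(s,g)$; its residue equals $\xi(n,k)$ times the constant function $1$, via a Langlands constant term computation (equivalently, Siegel's mean value formula for primitive sublattices). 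Therefore
\begin{equation*}
\mathrm{Res}_{s=n} F(s,0) = \xi(n,k) \int_{\G \backslash G} \Lambda^C 1 \, dg = \xi(n,k) \vol^C(\G \backslash G).
\end{equation*}

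Finally, I would interpret the residue sum of Proposition \ref{prop:chicken} as a contour integral. For a fixed small loop $\gamma$ centered at $s=n$ that contains the cluster $\{n + j\varepsilon : j = 0, \pm 1, \ldots, \pm(n-1)\}$ for all sufficiently small $\varepsilon > 0$, one has
\begin{equation*}
\mathrm{Res}_{s=n+O(\varepsilon)} Mf(s) F(s,\varepsilon) = \frac{1}{2\pi i}\oint_\gamma Mf(s) F(s,\varepsilon) \, ds,
\end{equation*}
provided $F(s,\varepsilon)$ has no further poles inside $\gamma$. By the uniform convergence on $\gamma$ established above, together with the polynomial bound on $M(w,\tilde\lambda)$ from Lemma \ref{lemma:misc_growth}, this contour integral tends as $\varepsilon \to 0$ to $\frac{1}{2\pi i}\oint_\gamma Mf(s) F(s,0) \, ds = Mf(n) \cdot \mathrm{Res}_{s=n} F(s,0) = Mf(n)\xi(n,k)\vol^C(\G \backslash G)$, as claimed. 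The main obstacle is the aforementioned proviso: one must confirm that the apparent singularities of $F(s,\varepsilon)$ coming from the denominators $\prod_\alpha \langle \alpha^\vee, w\tilde\lambda - \rho \rangle$ cancel across the Weyl group except precisely at the announced cluster. This is exactly the combinatorial singularity-removal analysis that the paper defers to Section 6, and I expect the perturbation by $\varepsilon\rho$ to play a decisive role there in separating singularities that would otherwise be entangled at $\varepsilon = 0$.
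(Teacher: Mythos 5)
Your proposal is correct in outline and takes a genuinely different route from the paper. The paper's proof is a direct term-by-term manipulation: it uses the functional equation $M(ww_*,\tilde\lambda) = M(w,-\tilde\rho)M(w_*,\tilde\lambda)$ with the distinguished Weyl element $w_*$ (the one mapping $\tilde\lambda(n)$ to $-\tilde\rho$) to rewrite the Maass--Selberg expression for $\vol^C(\G\backslash G)$ in terms of $\tilde\lambda(n)$, then explicitly computes $M(w_*,\tilde\lambda(n)) = \xi(1+(n-1)\varepsilon)\cdot\xi(n,k)\cdot(1+O(\varepsilon))$ and matches each Weyl-group summand of \eqref{eq:s1res} against the corresponding summand of $\vol^C$ via a small case analysis at $w(1)+1=w(n)$ versus $w(1)>w(n)$. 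Your proof instead identifies the inner sum $F(s,\varepsilon)$ intrinsically as $\int_{\G\backslash G}\Lambda^CE_B(\tilde\lambda,g)\,dg$ (by pairing against $\Lambda^C 1$ via a limiting Maass--Selberg relation), then uses the pole structure of the Eisenstein series itself and a contour-integral interpretation of $\mathrm{Res}_{s=n+O(\varepsilon)}$. This has the advantage of making the ``cancellation'' of spurious residues across the Weyl group automatic --- once you know $F(s,\varepsilon)$ is the $L^1$-integral of $\Lambda^C E_B$, its only pole in the cluster is the genuine one at $s=n-(n-1)\varepsilon$ coming from $\langle\alpha^\vee_{1n},\tilde\lambda\rangle=1$ --- whereas the paper must verify removability by hand. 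The trade-off is that you offload work onto two facts that need careful verification and are not quite for free: (a) that for $w_2\neq e$ the vanishing of $M(w_2,-\rho)$ is not offset by a vanishing denominator, and here the perturbation $\varepsilon\rho$ is exactly what saves you, since the $\varepsilon$-coefficient of $\langle\alpha^\vee_j,w_1\tilde\lambda - w_2\rho\rangle$ is $w_1^{-1}(j+1)-w_1^{-1}(j)\neq 0$, so the denominator stays bounded away from zero for generic $\varepsilon$; and (b) that $\mathrm{Res}_{s=n}E_P(s,g)$ is the \emph{constant} $\xi(n,k)$, which you credit to the Langlands constant-term formula --- this is true but it is essentially the same $M(w_*)$ computation the paper performs, just seen through a different lens, so no duplication of effort is really avoided there. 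Finally, your appeal to Lemma \ref{lemma:misc_growth} for uniform convergence on $\gamma$ is slightly off target (that lemma controls growth in $\im s$, not behavior as $\varepsilon\to 0$); what you actually want is the remark following Proposition \ref{prop:uni_gro'}, specifically \eqref{eq:more2}, together with compactness of the truncated region, which you do cite. With those two details filled in, this is a valid and arguably more conceptual alternative to the paper's computation.
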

\begin{proof}
By the Maass-Selberg relation,
\begin{equation} \label{eq:trunc_domain}
\vol^C(\Gamma\backslash G)=\lim_{\varepsilon \rightarrow 0}\sum_{w \in W} \frac{e^{\langle C, -w\tilde\rho-\rho\rangle}}{\prod_{\alpha \in \Delta} \langle \alpha^\vee, -w\tilde\rho-\rho \rangle}M(w,-\tilde\rho).
\end{equation}
Therefore the required work is to compare \eqref{eq:s1res} and \eqref{eq:trunc_domain}.

Recall the functional equation
\begin{equation} \label{eq:fnleq_M}
M(w_1w_2,\lambda) = M(w_1,w_2\lambda)M(w_2,\lambda)
\end{equation}
for any $w_1,w_2 \in W$.
Let $w_* \in W$ be the unique element such that
\begin{align*}
w_*(n-k+1) &< w_*(n-k+2) < \ldots < w_*(n) \\
&< w_*(1) < w_*(2) < \ldots < w_*(n-k).
\end{align*}
Also let $\tilde\rho$ be such that $w_*\tilde\lambda(n) = -\tilde\rho$; observe that $\tilde\rho = \rho + O(\varepsilon)$ indeed. \eqref{eq:fnleq_M} implies
\begin{equation*}
M(ww_*,\tilde\lambda) = M(w,-\tilde\rho)M(w_*,\tilde\lambda)
\end{equation*}
for any $w\in W$, and thus the left-hand side of \eqref{eq:trunc_domain} is equal to
\begin{equation*}
\lim_{\varepsilon\rightarrow0}\frac{1}{M(w_*,\tilde\lambda(n))}\sum_{w \in W} \frac{e^{\langle C, w\tilde\lambda(n)-\rho\rangle}}{\prod_{\alpha \in \Delta} \langle \alpha^\vee, w\tilde\lambda(n)-\rho \rangle}M(w,\tilde\lambda(n)).
\end{equation*}
Here, one computes that
\begin{equation*}
M(w_*,\tilde\lambda(n)) = \xi(1+(n-1)\varepsilon)\cdot \frac{\xi(2+O(\varepsilon)) \cdots \xi(n-k+O(\varepsilon))}{\xi(k+1+O(\varepsilon)) \cdots\xi(n+O(\varepsilon))}.
\end{equation*}
Therefore, it suffices to show that, for each $w \in W$,
\begin{align}\label{eq:chicken}
&\lim_{\varepsilon\rightarrow0} (n-1)\varepsilon\cdot\frac{e^{\langle C, w\tilde\lambda(n)-\rho\rangle}}{\prod_{\alpha \in \Delta} \langle \alpha^\vee, w\tilde\lambda(n)-\rho \rangle}M(w,\tilde\lambda(n)) \\
&= \frac{1}{2\pi i}\mathrm{Res}_{s=n+O(\varepsilon)}\frac{e^{\langle C, w\tilde\lambda-\rho\rangle}}{\prod_{\alpha \in \Delta} \langle \alpha^\vee, w\tilde\lambda-\rho \rangle}M(w,\tilde\lambda). \notag
\end{align}

Before we start, let us remark that, in the expression
\begin{equation*}
\frac{e^{\langle C, w\tilde\lambda-\rho\rangle}}{\prod_{\alpha \in \Delta} \langle \alpha^\vee, w\tilde\lambda-\rho \rangle}M(w,\tilde\lambda),
\end{equation*}
any potential singularity in the $\varepsilon\rightarrow0$ limit that is independent of $s$ is a removable one. First of all, $M(w,\tilde\lambda)$ induces no such singularity, since there exists no positive root $\alpha$ such that $\langle \alpha^\vee, \lambda\rangle = 1$. As for the denominator terms, if $\langle \alpha^\vee_j,w\tilde\lambda-\rho\rangle = O(\varepsilon)$ for some $j$, then $w^{-1}(j)$ and $w^{-1}(j+1)$ must be adjacent and in the same block, and thus $w^{-1}(j) > w^{-1}(j+1)$; this causes $c(\langle \alpha^\vee_{w^{-1}(j+1)},\tilde\lambda\rangle)= c(-1+O(\varepsilon)) = O(\varepsilon)$ to appear as a factor of $M(w,\tilde\lambda)$, which offsets the size of $\langle \alpha^\vee_j,w\tilde\lambda-\rho\rangle$.

Therefore, for either side of \eqref{eq:chicken} to yield a nonzero value, there are only two mutually exclusive possibilities: $w(1)+1=w(n)$ or $w(1) > w(n)$ --- note that this also implies that all poles of \eqref{eq:s1res} at $s=n+O(\varepsilon)$ are simple.

In case $w(1)+1=w(n)$, the denominator on the left-hand side of \eqref{eq:chicken} has a factor $(n-1)\varepsilon$; in case $w(1) > w(n)$, $M(w,\tilde\lambda)$ contains the factor $c(\langle \alpha^\vee_{1n},\tilde\lambda(n)\rangle)$, which has the factor $\xi(1+(n-1)\varepsilon)=1/((n-1)\varepsilon+O(\varepsilon^2))$. On the right-hand side of \eqref{eq:chicken}, the counterpart of these corresponding terms are $s-n+(n-1)\varepsilon$ and $\xi(s-n+1+(n-1)\varepsilon)$, respectively. From these observations, it is clear that the equality \eqref{eq:chicken} holds.
\end{proof}

Proposition \ref{prop:chicken} accounts for the main term appearing in the statement of Theorem \ref{thm:L1-norm}. The error term arises from the integral
\begin{equation}\label{eq:L1error}
\lim_{\varepsilon\rightarrow0} \frac{1}{2\pi i}\int_{\re s = n-\eta} Mf(s)\sum_{w \in W} \frac{e^{\langle C, w\tilde\lambda-\rho\rangle}}{\prod_{\alpha \in \Delta} \langle\alpha^\vee,w\tilde\lambda-\rho\rangle}M(w,\tilde\lambda)ds
\end{equation}
that remains after the contour shift. The only obstacle to bounding this integral is the possibility that $\langle \alpha^\vee_j, w\tilde\lambda-\rho \rangle = O(\varepsilon)$ for some $j$. This means that $\langle \alpha^\vee_j,w\lambda\rangle = 1$, and thus $w^{-1}(j) = w^{-1}(j+1)+1$ and $j\neq n-k$. Hence the factor $c(\langle \alpha^\vee_{w^{-1}(j+1)},\tilde\lambda \rangle) = O(\varepsilon)$ appears in $M(w,\tilde\lambda)$, which balances out the denominator term $\langle \alpha^\vee_j, w\tilde\lambda-\rho \rangle$. Hence the above integral converges.

It is easy to show that $\langle \rho^\vee,w\lambda\rangle$ is maximized when the entries of $w\lambda$ are sorted in descending order. Thus the (real part of) $T$-degree of the above integral is maximized when
\begin{equation*}
w\lambda(n-\eta) = \rho +(0,\ldots,0;\underbrace{\eta,\ldots,\eta}_{\mbox{\tiny $k$ times}}),
\end{equation*}
i.e. $w$ ``reverses order in each block,'' to
\begin{equation*}
\eta \left(\frac{-n+1}{2} + \cdots + \frac{-n+2k-1}{2}\right) = -\frac{\eta k(n-k)}{2}.
\end{equation*}
Therefore \eqref{eq:L1error} is bounded by $O_{n,f}(T^{-\frac{\eta k(n-k)}{2}})$. This completes the proof of Theorem \ref{thm:L1-norm}.

\subsection{Proof of Proposition \ref{prop:tr_fund_vol}}

Recall from \eqref{eq:trunc_domain} that
\begin{equation*}
\vol^C(\Gamma\backslash G)=\lim_{\varepsilon \rightarrow 0}\sum_{w \in W} \frac{e^{\langle C, -w\tilde\rho-\rho\rangle}}{\prod_{\alpha \in \Delta} \langle \alpha^\vee, -w\tilde\rho-\rho \rangle}M(w,-\tilde\rho).
\end{equation*}
Suppose that $\langle \alpha^\vee_j,-w\tilde\rho-\rho\rangle = O(\varepsilon)$. Then $\langle \alpha^\vee_j, w\rho\rangle = -1$, so $w^{-1}(j) = w^{-1}(j+1)+1$. This causes $c(\langle \alpha^\vee_{w^{-1}(j+1)}, -\tilde\rho\rangle) = O(\varepsilon)$ to appear as a factor of $M(w,-\tilde\rho)$, balancing out the growth of $\langle \alpha^\vee_j,-w\tilde\rho-\rho\rangle$. In particular, the above limit converges.

The flip side of this is that, if $c(\langle \alpha^\vee_h, -\tilde\rho\rangle) = O(\varepsilon)$ appears as a factor of $M(w,-\tilde\rho)$, then the summand corresponding to $w\in W$ vanishes as $\varepsilon\rightarrow0$ unless $\langle \alpha^\vee_{w(h+1)},-w\tilde\rho-\rho\rangle=O(\varepsilon)$. This greatly limits the number of $w\in W$ that contribute nontrivially to the formula \eqref{eq:trunc_domain}: if $w(h+1) < w(h)$, then $w(h+1)+1=w(h)$. Let us say that $w \in W$ is \emph{permissible} if it satisfies this condition.

Permissible permutations lead to the following combinatorial consideration. Partition an ordered set $A=\{1,2,\ldots,n\}$ into $p$ ordered contiguous blocks $A_1,\ldots,A_p$, so that $A=A_1\cup \cdots \cup A_p$; here, taking union is order-sensitive too. Also, for an ordered set $S$, write $S^r$ for the set consisting of the same elements but in reversed order; for example, if $S= \{1,2,3\}$, then $S^r =\{3,2,1\}$.

As an ordered set, $-\rho$ can be naturally identified with $A$. Each permissible $w \in W$ must take $-\rho$ into an ordered set $\varphi$ of the form
\begin{equation*}
\varphi = A_1^r \cup A_2^r \cup \cdots \cup A_p^r.
\end{equation*}
If $\varphi_j < \varphi_{j+1}$, then $\langle \rho^\vee,(j\ j+1)\varphi\rangle > \langle \rho^\vee, \varphi\rangle$ --- the difference is precisely $\varphi_{j+1}-\varphi_j$. From this, we see that $\langle \rho^\vee, \varphi\rangle$ increases whenever we move $A_i^r$ to the left of $A_{i-1}^r$, and then concatenate the resulting $A_i^r \cup A_{i-1}^r$ into a single block. In particular, for a given $\varphi$, there exists $\varphi'$ with a smaller number of blocks, also coming from a permissible permutation, such that $\langle \rho^\vee, \varphi'\rangle > \langle \rho^\vee, \varphi'\rangle$.

Therefore, the highest $T$-degree $\langle \rho^\vee, -w\tilde\rho-\rho\rangle$ occurs when $p=1$ with $-w\tilde\rho=\rho$, and the degree is zero. Clearly, the next highest $T$-degree occurs when $p=2$, and a moment's reflection shows that it happens when
\begin{equation*}
\varphi = (n-1,\ldots,2,1,n) \mbox{ or } (1,n,\ldots,3,2),
\end{equation*}
and the corresponding $T$-degree is $-n(n-1)/2$. In fact, it can also be shown that the contributions from these two subleading $\varphi$'s are identical and thus do not cancel each other out; hence this error estimate is sharp.

\section{The truncated inner product}



{\color{blue} Note 4/13/25: move $c_1$ a little less than $c_2$ to avoid annoying problems in Secs. 5.3 and 5.4. This also facilitates the application.}

\subsection{Formulation}
Let $f_1,f_2:\R_{\geq 0} \rightarrow \R$ be smooth functions with compact support. For $1 \leq k_1,k_2 \leq n$, let $P_1,P_2$ be the maximal parabolic subgroups $P(k_1),P(k_2)$ of $G$, respectively. Accordingly, for $s_1,s_2 \in \C$, we write
\begin{align*}
\lambda_1(s_1) &\equiv (-n, \ldots, -k_1-1; -s_1-k_1, \ldots, -s_1-1), \\
\lambda_2(s_2) &\equiv (-n, \ldots, -k_2-1; -s_2-k_2, \ldots, -s_2-1),
\end{align*}
similarly to \eqref{eq:lambda}.
The goal of this section is to estimate the truncated correlation
\begin{equation} \label{eq:goal}
\int_{\Gamma\backslash G} \Lambda^C E_{P_1,f_1}(g)E_{P_2,f_2}(g) dg.
\end{equation}
The statement below provides our starting point.
\begin{proposition} \label{prop:formulation}
Continue with the above notations.
Then \eqref{eq:goal} is equal to, upon writing
$\tilde\lambda_1 = \lambda_1 + \varepsilon\rho$,
\begin{align} \label{eq:theexp'}
\lim_{\varepsilon\rightarrow 0}&\frac{1}{(2\pi i)^2}\int_{\re s_1 = c_1}\int_{\re s_2 = c_2} Mf_1(s_1)Mf_2(s_2) \\
&\sum_{w_1,w_2 \in W} \frac{e^{\langle C, w_1\tilde\lambda_1+w_2\lambda_2 \rangle}}{\prod_{\alpha \in \Delta}\langle \alpha^\vee, w_1\tilde\lambda_1 + w_2\lambda_2 \rangle} M(w_1,\tilde\lambda_1)M(w_2,\lambda_2) ds_2 ds_1.\notag
\end{align}
\end{proposition}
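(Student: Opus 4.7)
The plan is to reduce \eqref{eq:goal} to an iterated integral of a Maass--Selberg inner product and then apply the Maass--Selberg relation stated in Section \ref{sec:maass}. First, using the fact that $\Lambda^C$ is self-adjoint and idempotent, together with Proposition \ref{prop:umg} (which tells us that $E_{P_i,f_i}$ is UMG, so $\Lambda^C E_{P_i,f_i}$ is of rapid decay and the integral converges absolutely), we rewrite
\begin{equation*}
\int_{\Gamma\backslash G}\Lambda^C E_{P_1,f_1}(g)\,E_{P_2,f_2}(g)\,dg
=\int_{\Gamma\backslash G}\Lambda^C E_{P_1,f_1}(g)\,\Lambda^C E_{P_2,f_2}(g)\,dg.
\end{equation*}

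Next I would invoke Mellin inversion \eqref{eq:pseisen2} on each $E_{P_i,f_i}$, choosing contours $\re s_i = c_i > n$, and Proposition \ref{prop:epeb} to identify $E_{P_i}(s_i,g) = E_B(\lambda_i(s_i),g)$. Pulling the $s_1,s_2$ integrals outside of the $\Gamma\backslash G$ integral, one obtains
\begin{align*}
&\int_{\Gamma\backslash G}\Lambda^C E_{P_1,f_1}(g)\,\Lambda^C E_{P_2,f_2}(g)\,dg \\
&= \frac{1}{(2\pi i)^2}\int_{\re s_1=c_1}\int_{\re s_2=c_2} Mf_1(s_1)Mf_2(s_2) \\
&\hspace{2cm}\times\left(\int_{\Gamma\backslash G}\Lambda^C E_B(\lambda_1(s_1),g)\,\Lambda^C E_B(\lambda_2(s_2),g)\,dg\right) ds_2\,ds_1.
\end{align*}
The interchange is justified by a Fubini argument: $Mf_i(s_i)$ is of rapid decay on the vertical lines (since $f_i \in C_0^\infty$), while the bound \eqref{eq:more} combined with the rapid-decay statement in Proposition \ref{prop:umg} provides uniform integrable control of $\Lambda^CE_B(\lambda_i(s_i),g)$ in $(s_i,g)$ on the relevant strips.

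The main obstacle is that the Maass--Selberg formula in Section \ref{sec:maass} is a priori valid only at generic $(\lambda_1,\lambda_2)$ --- the denominators $\prod_{\alpha \in \Delta}\langle \alpha^\vee, w_1\lambda_1 + w_2\lambda_2\rangle$ vanish on a union of hyperplanes that the contours $\re s_i = c_i$ will typically cross or touch (most notably when $\lambda_1 = \lambda_2$, which can occur if $k_1 = k_2$). To sidestep this, I would perturb $\lambda_1(s_1)$ to $\tilde\lambda_1 = \lambda_1(s_1) + \varepsilon\rho$ for small $\varepsilon > 0$. With this perturbation the pair $(\tilde\lambda_1,\lambda_2)$ is generic for almost every $(s_1,s_2)$ on the contours, and the Maass--Selberg relation applies verbatim to yield
\begin{equation*}
\int_{\Gamma\backslash G}\Lambda^C E_B(\tilde\lambda_1,g)\Lambda^C E_B(\lambda_2(s_2),g)\,dg
= \sum_{w_1,w_2 \in W}\frac{e^{\langle C, w_1\tilde\lambda_1+w_2\lambda_2\rangle}}{\prod_{\alpha\in\Delta}\langle\alpha^\vee,w_1\tilde\lambda_1+w_2\lambda_2\rangle}M(w_1,\tilde\lambda_1)M(w_2,\lambda_2).
\end{equation*}

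Finally I would pass to the $\varepsilon \to 0$ limit \emph{outside} the double integral, which gives \eqref{eq:theexp'}. To justify taking the limit outside, I need uniform (in $\varepsilon$, on a neighborhood of $0$) integrable majorants for the left-hand side as a function of $(s_1,s_2)$. These are supplied by \eqref{eq:more2}, the rapid decay of $Mf_1,Mf_2$, and the fact that the rapid decay estimate for $\Lambda^C E_B(\tilde\lambda_1,\cdot)$ from Theorem \ref{thm:truncate} is polynomial in $|\im s_1|$ and continuous in $\varepsilon$ near $0$. Since the original left-hand side of the Maass--Selberg identity varies continuously in $\varepsilon$ (by dominated convergence again, using the same majorants pointwise on $\Gamma\backslash G$), the limit exists and equals the $\varepsilon=0$ value, completing the identification with \eqref{eq:theexp'}.
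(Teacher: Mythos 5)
Your proof is correct and follows essentially the same route as the paper: self-adjointness of $\Lambda^C$, Mellin inversion with Fubini, perturbation $\lambda_1\mapsto\tilde\lambda_1$, an integrated dominated-convergence argument to move the $\varepsilon\to0$ limit outside, and finally the Maass--Selberg relation. The only cosmetic difference is that the paper isolates as a separate lemma the commutation of the $s$-integral with $\Lambda^C$ (via compactness of $\Gamma\cap N_P\backslash N_P$ and finiteness of the sum in \eqref{eq:arthur_t}), whereas you fold this into the single Fubini justification; both are valid.
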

\begin{proof}
First we claim that, for any maximal parabolic $P$ and $f:\R_{\geq 0} \rightarrow \R$ smooth,
\begin{equation*}
\Lambda^CE_{P,f}(g) = \frac{1}{2\pi i} \int_{\re s=c} Mf(s)\Lambda^CE_P(s,g)ds.
\end{equation*}
By \eqref{eq:arthur_t}, $\Lambda^CE_{P,f}(g)$ equals
\begin{equation*}
\sum_{P \supseteq P_0} (-1)^{r(P)} \sum_{\delta \in \Gamma \cap P \backslash \Gamma}(E_f)_P(\delta g)\widehat\tau_P(H_P(\delta g) - C),
\end{equation*}
where
\begin{equation*}
(E_f)_P(g) = \int_{\Gamma \cap N_P \backslash N_P} E_f(ng)dn = \frac{1}{2\pi i}\int_{\Gamma \cap N_P \backslash N_P}\int_{\re s = c} Mf(s)E_P(s,ng)dsdn.
\end{equation*}
In the last expression, the order of the two integrals can be exchanged, since $\Gamma \cap N_P \backslash N_P$ is compact. Also, the foregoing summatory expression for $\Lambda^CE_f(g)$ is a finite sum for each $g$, so the integral over $s$ can be put before those sums as well. This proves the claim.

We now expand the square in \eqref{eq:goal} and apply the claim just proved, to rewrite it as
\begin{equation*}
\int_{\Gamma \backslash G} \frac{1}{(2\pi i)^2}\int_{\re s_1 = c_1}\int_{\re s_2 = c_2} Mf_1(s_1)Mf_2(s_2) \Lambda^CE_{P_1}(s_1,g)\Lambda^CE_{P_2}(s_2,g)ds_1ds_2dg.
\end{equation*}
At this point, we would like to ``perturb'' $E_{P_1}(s_1,g)$ into $E_B(\tilde\lambda_1,g)$. Clearly
\begin{equation*}
\Lambda^CE_{P_1}(s_1,g) = \Lambda^CE_B(\lambda_1,g) = \lim_{\varepsilon\rightarrow0}\Lambda^CE_B(\tilde\lambda_1,g),
\end{equation*}
so \eqref{eq:goal} equals
\begin{equation*}
\int_{\Gamma \backslash G} \frac{1}{(2\pi i)^2}\int_{\re s_1 = c_1}\int_{\re s_2 = c_2} \lim_{\varepsilon\rightarrow0}Mf_1(s_1)Mf_2(s_2) \Lambda^CE_B(\tilde\lambda_1,g)\Lambda^CE_B(\lambda_2,g)ds_1ds_2dg.
\end{equation*}

By the remark following the proof of Theorem \ref{thm:truncate}, both $\Lambda^CE_B(\tilde\lambda_1,g)$ and $\Lambda^CE_B(\lambda_2,g)$ outside the support of $\chi_{G,G}^C$ is bounded by $T^{-p}$ for any choice of $p>0$, times a polynomial in $s$ and $\varepsilon$. Therefore, for any $\varepsilon \in \R$ lying in a small neighborhood of zero, the integrand
\begin{equation*}
Mf_1(s_1)Mf_2(s_2) \Lambda^CE_B(\tilde\lambda_1,g)\Lambda^CE_B(\lambda_2,g)
\end{equation*}
is dominated by a function, independent of $\varepsilon$, whose integral over $\im s_1,\im s_2,$ and $g$ converges absolutely. Hence, we can exchange the order of the limit and the integral over $\Gamma\backslash G$. The Maass-Selberg relation then completes the proof.
\end{proof}

\subsection{Outline of the iterated residue calculus} \label{sec:outline}

Let us perform an iterated residue calculus on \eqref{eq:theexp'} --- for a small value of $\varepsilon \neq 0$, not necessarily positive, without the limit sign. Fix the parameters $0 < \eta_1 < \eta_2 < 1$ --- our intention is to take $\eta_1 \approx 0$ and $\eta_2 \approx 1$ --- and adjust $\varepsilon$ so that it is much smaller than $\eta_2(1-\eta_2)^2$.  We start with $c_1 = n + 2\eta_2-\eta_2^2$ and $c_2 = n + \eta_2(1-\eta_2)^2$. Next, we shift $c_2$ to $n-\eta_2$, and then and $c_1$ to $n-\eta_1$, picking up all the residues arising in the process. As in Section \ref{sec:l1}, Lemma \ref{lemma:misc_growth} justifies the contour shift.

Write $\tilde\lambda_2 = \lambda_2$ for convenience, even though we are not perturbing $\lambda_2$.
In Section \ref{sec:residue}, we will identify all poles of the integrand
\begin{equation} \label{eq:integrand}
Mf_1(s_1)Mf_2(s_2)\sum_{w_1,w_2 \in W} \frac{e^{\langle C, w_1\tilde\lambda_1+w_2\tilde\lambda_2 \rangle}}{\prod_{\alpha \in \Delta}\langle \alpha^\vee, w_1\tilde\lambda_1 + w_2\tilde\lambda_2 \rangle} M(w_1,\tilde\lambda_1)M(w_2,\tilde\lambda_2)
\end{equation}
on the strip $\re s_1 \in [n-\eta_1,n+2\eta_2-\eta_2^2]$ and $\re s_2 \in [n-\eta_2,n+\eta_2(1-\eta_2)^2]$, and evaluate the residues there. It turns out all the iterated residues occur at $s_2=n$ and $s_1=n+O(\varepsilon)$, and as $\varepsilon \rightarrow 0$, the sum of all residues will be shown to converge to
\begin{equation} \label{eq:residue}
Mf_1(n)Mf_2(n)\xi(n,k_1)\xi(n,k_2)\vol^C(\Gamma\backslash G).
\end{equation}
The $s_1$-residue of the integral of \eqref{eq:integrand} on $\re s_2=n-1/4$ will also be considered: it will be shown to be of size $O_{n,f_1,f_2,\eta_2}(T^{-\eta_2 k_2(n-k_2)/2})$ as $\varepsilon \rightarrow 0$.

The limit as $\varepsilon \rightarrow 0$ of the integral that remains after all the residues are picked up --- that is, the integral of \eqref{eq:integrand} on $\re s_1=n-\eta_1$ and $\re s_2=n-\eta_2$ --- will be estimated in Section \ref{sec:denom}.
The main obstacle is the possibility that
$ \langle \alpha^\vee_j,w_1\tilde\lambda_1+w_2\tilde\lambda_2\rangle = O(\varepsilon)$ for some $j$. We will handle such singularities appropriately, and prove that the limit of the integral as $\varepsilon \rightarrow 0$ converges to $O_{n,f_1,f_2,\eta_1,\eta_2}(T^\kappa)$ for an explicit $\kappa = \kappa(n,k_1,k_2,\eta_1,\eta_2)$.

These arguments constitute the proof the following theorem, that concludes our study of \eqref{eq:goal}.
\begin{theorem} \label{thm:res_conc}
Fix $0 < \eta_1 < \eta_2 < 1$, and let $f_1,f_2:\R_{\geq 0} \rightarrow \R$ be smooth functions with compact support. Then
\begin{align*}
&\int_{\Gamma\backslash G} \Lambda^C E_{P_1,f_1}(g)E_{P_2,f_2}(g) dg \\ 
&= Mf_1(n)Mf_2(n)\xi(n,k_1)\xi(n,k_2)\vol^C(\Gamma\backslash G)+O\left(T^\kappa\right),
\end{align*}
where the implicit constant depends on $n,f_1,f_2,\eta_1,\eta_2$, and $\kappa \leq n(n^2-1)/6$ can be written explicitly as a function of $n,k_1,k_2,\eta_1,\eta_2$. More precisely, the error term is the sum of \eqref{eq:res-half} and \eqref{eq:res-remain} below in the $\varepsilon\rightarrow0$ limit.
\end{theorem}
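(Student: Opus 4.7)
The plan is to carry out the iterated residue calculus sketched in Section~\ref{sec:outline}. Starting from Proposition~\ref{prop:formulation}, I would first shift the $s_2$-contour from $\re s_2 = c_2$ to $\re s_2 = n - \eta_2$, justifying the deformation through Lemma~\ref{lemma:misc_growth} (whose polynomial bound on vertical lines applies verbatim to $M(w_2,\tilde\lambda_2)$), and collecting $s_2$-residues en route. With $\varepsilon \neq 0$ fixed at a tiny value, the $s_2$-poles of the integrand \eqref{eq:integrand} come only from the denominator factors $\langle \alpha^\vee, w_1\tilde\lambda_1 + w_2\tilde\lambda_2\rangle$, since the perturbation $\varepsilon\rho$ separates them from any potential poles of $M(w_2,\tilde\lambda_2)$. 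Then I would shift the surviving $s_1$-contour from $\re s_1 = c_1$ to $\re s_1 = n - \eta_1$, again collecting residues.

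The central claim is that the iterated residues concentrate at $(s_1, s_2) = (n+O(\varepsilon),\,n+O(\varepsilon))$ and that their sum converges, as $\varepsilon \to 0$, to \eqref{eq:residue}. My approach mirrors the one-variable computation in Proposition~\ref{prop:chicken}, applied once in each of the two Eisenstein variables. For each $i \in \{1,2\}$ I would introduce the unique $w_*^{(i)} \in W$ rearranging $\tilde\lambda_i(n)$ into (a representative of) $-\tilde\rho$, use the functional equation \eqref{eq:fnleq_M} to factor $M(v_i w_*^{(i)},\tilde\lambda_i(n)) = M(v_i,-\tilde\rho)\,M(w_*^{(i)},\tilde\lambda_i(n))$, and match the explicit $\xi$-factors in $M(w_*^{(i)},\tilde\lambda_i(n))$ against the $s_i$-residue at $s_i = n+O(\varepsilon)$; this produces the prefactor $Mf_1(n)Mf_2(n)\xi(n,k_1)\xi(n,k_2)$. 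The relabeled double sum over $(v_1, v_2) \in W \times W$ should then collapse, via a simultaneous cancellation of $O(\varepsilon)$-divergences against $O(\varepsilon)$-vanishings --- analogous to the analysis following \eqref{eq:chicken} but carried out in both Weyl variables at once --- to the Maass-Selberg formula \eqref{eq:trunc_domain} for $\vol^C(\G\backslash G)$. The intermediate terms arising from the first contour shift --- the $s_1$-integrals of the $s_2$-residues collected on $n-\eta_2 < \re s_2 \leq c_2$ --- are bounded by $O(T^{-\eta_2 k_2(n-k_2)/2})$ after maximizing $\re\langle \rho^\vee, w_1\tilde\lambda_1 + w_2\lambda_2\rangle$ over the contributing Weyl pairs, exactly as at the end of the proof of Theorem~\ref{thm:L1-norm}.

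It remains to estimate, in the $\varepsilon \to 0$ limit, the doubly-shifted integral over $\{\re s_1 = n-\eta_1\} \times \{\re s_2 = n-\eta_2\}$. This is the main obstacle. The Maass-Selberg sum carries apparent singularities $\langle \alpha^\vee_j, w_1\tilde\lambda_1 + w_2\tilde\lambda_2 \rangle = O(\varepsilon)$ in the denominators, which must be compensated by the $O(\varepsilon)$-zeros contributed by the factors $c(\langle \alpha^\vee, \tilde\lambda_i\rangle)$ appearing in $M(w_i,\tilde\lambda_i)$. Such a zero forces $w_1^{-1}(j+1)+1 = w_1^{-1}(j)$ or $w_2^{-1}(j+1)+1 = w_2^{-1}(j)$, which does produce a matching $c$-factor; but when several singular hyperplanes coalesce the bookkeeping becomes delicate, and no uniform analysis appears to exist in the literature beyond low-rank cases. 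I would delegate this combinatorial resolution to Section~6 of the paper, using the strategy of perturbing $\tilde\lambda_1$ first to separate the singular hyperplanes and then resolving each isolated singularity by explicit expansion. Once the cancellations have been carried out, every surviving summand is integrable in $\im s_1, \im s_2$, and the explicit exponent $\kappa$ emerges from maximizing $\re\langle \rho^\vee, w_1\lambda_1(n-\eta_1) + w_2\lambda_2(n-\eta_2) \rangle$ over $(w_1, w_2) \in W \times W$; the crude bound $\kappa \leq n(n^2-1)/6$ follows from a Cauchy-Schwarz estimate combined with $\|\rho^\vee\|^2 = n(n^2-1)/12$.
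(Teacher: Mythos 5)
Your outline of an iterated contour shift, followed by a cancellation analysis for the denominator singularities and a maximization of the $T$-exponent, is in broad strokes consistent with the paper's Section 5. However, there is a concrete error in the residue bookkeeping that would cause you to lose the main term entirely.

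You assert that, after perturbing $\tilde\lambda_1 = \lambda_1 + \varepsilon\rho$, ``the $s_2$-poles of the integrand \eqref{eq:integrand} come only from the denominator factors $\langle \alpha^\vee, w_1\tilde\lambda_1 + w_2\tilde\lambda_2\rangle$, since the perturbation $\varepsilon\rho$ separates them from any potential poles of $M(w_2,\tilde\lambda_2)$.'' This is backwards. The paper deliberately leaves $\lambda_2$ unperturbed (setting $\tilde\lambda_2 = \lambda_2$), so $M(w_2, \tilde\lambda_2) = M(w_2, \lambda_2)$ genuinely has a simple pole at $s_2 = n$ when $w_2$ is the unique block-swap permutation satisfying \eqref{eq:w*}. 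It is exactly this pole that carries the main term: one computes $\mathrm{Res}_{s_2=n} M(w_2,\lambda_2) = \xi(n,k_2)$, which yields the expression \eqref{eq:res-s2}, and only then does the remaining single-variable $s_1$-residue become the object handled by Proposition~\ref{prop:chicken}. Conversely, the residues from the denominator factors --- the Type I and Type II poles of Section~\ref{sec:residue} --- are shown in Section~\ref{sec:poles} to cancel pairwise across Weyl elements and contribute \emph{nothing}. Discarding the $M(w_2,\tilde\lambda_2)$ pole and keeping the denominator poles, as your plan does, would produce a vanishing total $s_2$-residue and no main term at all.

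A related consequence is that your proposed simultaneous ``double Weyl'' cancellation over $(v_1,v_2) \in W \times W$, applying the Proposition~\ref{prop:chicken} mechanism once in each variable, does not match the actual mechanism. The $s_2$-residue from $M(w_2,\tilde\lambda_2)$ first collapses the entire $w_2$-sum to a single summand, and only after this collapse does the surviving $s_1$-integral reduce to the one-variable situation Proposition~\ref{prop:chicken} treats. The remainder of your sketch --- bounding \eqref{eq:res-half} by maximizing $\re\langle\rho^\vee, w_2\lambda_2(n-\eta_2) - \rho\rangle$ as in Theorem~\ref{thm:L1-norm}, estimating \eqref{eq:res-remain} via the combinatorial cancellations delegated to Section~\ref{sec:poles}, and the crude bound $\kappa \leq n(n^2-1)/6$ --- is consistent with the paper's argument, but the misattribution of the residue source is a fatal flaw that must be fixed before the rest can go through.
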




Theorems \ref{thm:truncate}, \ref{thm:res_conc} and Proposition \ref{prop:tr_fund_vol} immediately imply the following harshly truncated correlation formula.
\begin{theorem} \label{thm:wanted}
Continue with the notations of Theorem \ref{thm:res_conc}. Then for any $p>0$
\begin{align*}
&\int_{\Gamma\backslash G} \mathrm H^CE_{P_1,f_1}(g)E_{P_2,f_2}(g) dg \\
&= Mf_1(n)Mf_2(n)\xi(n,k_1)\xi(n,k_2)\vol^C(\Gamma\backslash G) +O\left(T^\kappa\right).
\end{align*}
The implicit constants depend on $n,f_1,f_2,\eta_1,\eta_2$, and $p$. More precisely, the error term is that of Theorem \ref{thm:res_conc} plus the product of the size bounds on $\Lambda^CE_{P_1,f_1}(g)$ and $\Lambda^CE_{P_2,f_2}(g)$ outside the truncated domain given by Theorem \ref{thm:truncate}.
\end{theorem}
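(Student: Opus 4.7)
The plan is straightforward: Theorems \ref{thm:truncate} and \ref{thm:res_conc} do all the heavy analytic lifting, and what remains is a short reconciliation of the harsh and Arthur truncations. First I will check that Theorem \ref{thm:truncate} applies in our setting. Because $f_1,f_2$ are smooth and of compact support, their Mellin transforms decay rapidly on every vertical line in $\{\re s > 0\}$, so by Proposition \ref{prop:umg} both $E_{P_1,f_1}$ and $E_{P_2,f_2}$ are of uniform moderate growth. Consequently, for any $p>0$, Theorem \ref{thm:truncate} yields $|\Lambda^C E_{P_i,f_i}(g)| \ll T^{-p}$ outside $\mathrm{supp}\,\chi_{G,G}^C$ for $i=1,2$.

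The key step is to rewrite each of the two integrals of interest so that \emph{both} factors carry the Arthur truncation. Self-adjointness combined with idempotency of $\Lambda^C$ gives
\begin{equation*}
\int_{\Gamma\backslash G} \Lambda^C E_{P_1,f_1}(g)\,E_{P_2,f_2}(g)\,dg = \int_{\Gamma\backslash G} \Lambda^C E_{P_1,f_1}(g)\,\Lambda^C E_{P_2,f_2}(g)\,dg,
\end{equation*}
while the fact that $\mathrm H^C E_{P_1,f_1}$ is supported on $\mathrm{supp}\,\chi_{G,G}^C$ --- precisely where $\Lambda^C E_{P_2,f_2}$ coincides with $E_{P_2,f_2}$ --- similarly gives
\begin{equation*}
\int_{\Gamma\backslash G} \mathrm H^C E_{P_1,f_1}(g)\,E_{P_2,f_2}(g)\,dg = \int_{\Gamma\backslash G} \mathrm H^C E_{P_1,f_1}(g)\,\Lambda^C E_{P_2,f_2}(g)\,dg.
\end{equation*}

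Subtracting, the discrepancy between the two integrals equals
\begin{equation*}
\int_{\Gamma\backslash G} (\mathrm H^C - \Lambda^C)E_{P_1,f_1}(g)\,\Lambda^C E_{P_2,f_2}(g)\,dg.
\end{equation*}
The integrand vanishes on $\mathrm{supp}\,\chi_{G,G}^C$ since $\mathrm H^C E_{P_1,f_1} = E_{P_1,f_1} = \Lambda^C E_{P_1,f_1}$ there; on the complement, $\mathrm H^C E_{P_1,f_1}=0$ and both Arthur truncations are $O(T^{-p})$, so the product is $O(T^{-2p})$ pointwise. Since $\Gamma\backslash G$ has finite total measure --- with Proposition \ref{prop:tr_fund_vol} giving the cut-off region measure only $O(T^{-n(n-1)/2})$ if a sharper bound is wanted --- the integral is $O(T^{-2p})$. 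Choosing $p$ large enough that $T^{-2p}$ is dominated by the $O(T^\kappa)$ error of Theorem \ref{thm:res_conc}, and invoking that theorem, yields the claim. I anticipate no substantive obstacle: the two preceding theorems encapsulate all the analytic content, and Theorem \ref{thm:wanted} follows by careful bookkeeping with the two senses of truncation.
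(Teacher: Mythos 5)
Your proof is correct and is essentially the argument the paper has in mind (the paper states the theorem as an immediate consequence of Theorems \ref{thm:truncate}, \ref{thm:res_conc} and Proposition \ref{prop:tr_fund_vol}, with the error described exactly as "the product of the size bounds" from Theorem \ref{thm:truncate}). The rewriting via self-adjointness and idempotency of $\Lambda^C$, and via the support of $\mathrm H^C E_{P_1,f_1}$, is the right way to reduce the difference to an integral over the truncated region where both factors are $O(T^{-p})$. One small remark: since $\kappa$ is in general positive (indeed $\kappa \le n(n^2-1)/6$), the bound $T^{-2p}\ll T^\kappa$ holds for every $p>0$ and $T\ge 1$, so no "choosing $p$ large enough" is needed --- the theorem's phrase "for any $p>0$" simply records that the implicit constant depends on the chosen $p$. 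Also, as you note, the total finiteness of $\vol(\Gamma\backslash G)$ already suffices to bound the integral over the truncated region; Proposition \ref{prop:tr_fund_vol} (which the paper also cites) only sharpens this but is not essential.
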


\ignore{
Because of the way $\lambda_i$ is defined, $M(w_i,\lambda_i)$ possibly has a pole (as a function of $s_i$) only at $s_i = 2, 3, \ldots, n$. The poles arising from the zeros of the expression
\begin{equation} \label{eq:denom}
\prod_{\alpha \in \Delta}\langle \alpha^\vee, w_1\lambda_1 + w_2\lambda_2 \rangle
\end{equation}
require more consideration. 

What we are concerned with is the scenario in which an individual summand
\begin{equation}\label{eq:ind_term}
\frac{e^{\langle C, w_1\lambda_1+w_2\lambda_2 \rangle}}{\prod_{\alpha \in \Delta}\langle \alpha^\vee, w_1\lambda_1 + w_2\lambda_2 \rangle} M(w_1,\lambda_1)M(w_2,\lambda_2)
\end{equation}
of \eqref{eq:theexp} is singular for infinitely many --- possibly all --- values of $s_1$ and $s_2$. In this case, we need to take care in interpreting \eqref{eq:ind_term} and ultimately \eqref{eq:theexp} correctly. There are two ways in which the singularity can be removed. First, for a zero of the denominator \eqref{eq:denom}, there may be a zero of $M(w_i,\lambda_i)$ that ``cancels it out.'' Second, the singularity may ``cancel out'' with one or more other terms of the form \eqref{eq:ind_term} with different choices of $w_1,w_2 \in W$. It turns out that we need to combine both techniques.


\vspace{4mm}

Let us perceive $\lambda_i$ as a concatenation of two blocks, one consisting of the first $n-k$ entries and the other of the last $k$ entries; it is marked by a semicolon in \eqref{eq:lambda}. Take $\alpha \in \Delta$ for which $\langle \alpha^\vee, w_1\lambda_1 + w_2\lambda_2 \rangle = 0$ for at least one choice of $s_1$ and $s_2$. If $\alpha = \alpha_j$, this means
\begin{equation*}
(\lambda_1)_{w_1^{-1}(j)} + (\lambda_2)_{w_2^{-1}(j)} - (\lambda_1)_{w_1^{-1}(j+1)} - (\lambda_2)_{w_2^{-1}(j+1)} = 0.
\end{equation*}
We say that this zero of \eqref{eq:denom} is \emph{independent of $s_i$} if $w_i^{-1}(j)$ and $w_i^{-1}(j+1)$ belong to the same block, and say it is \emph{dependent on $s_i$} if otherwise. We use this notion to classify the zeros into different cases. Before the $s_2$-residues are taken, there are four different possibilities:
\begin{enumerate}[(i)]
\item The zero is independent of both $s_1$ and $s_2$. It will be studied in the next section.

\item The zero is independent of $s_1$ but dependent on $s_2$. Such a zero is accounted for upon taking the $s_2$-residue and hence causes no problem.

\item The zero is dependent on $s_1$ but independent of $s_2$. This will be taken care of when taking the $s_1$-residue.

\item The zero is dependent on both $s_1$ and $s_2$. Because of the way $c_1$ and $c_2$ move that we set up earlier, this case will never happen.
\end{enumerate}
After the $s_2$-residues are taken, there are two possibilities:
\begin{enumerate}
\item[(v)] The zero is independent of $s_1$. The possibility for such a zero is already removed by (i) and (ii) above.

\item[(vi)] The zero is dependent on $s_1$. It will simply be picked up by the $s_1$-residue computation.
\end{enumerate}

Thus we conclude that the singularities of \eqref{eq:ind_term} are either removable or to be picked up by the residue computation. By inspecting \eqref{eq:lambda} and \eqref{eq:ind_term}, it is clear that there exists only finitely many $s_2$-residues, and also only finitely many $s_1$-residues for each of those $s_2$-residues.


}

\subsection{Residue computation} \label{sec:residue}

We continue with the settings of the previous sections. Let us start by considering the $s_2$-residues of \eqref{eq:integrand} that would appear as $c_2$ moves from $n+\eta_2(1-\eta_2)^2$ to $n-\eta_2$, and $c_1$ stays at $n+2\eta_2-\eta_2^2$. $Mf_1,Mf_2,$ and $e^{\langle C,w_1\tilde\lambda_1+w_2\tilde\lambda_2\rangle}$ are holomorphic, so no poles arise from those. From the shape of $\tilde\lambda_2$, one sees that $M(w_2,\tilde\lambda_2)$ has a pole only if $s_2=n$. The denominator terms $\langle \alpha^\vee,w_1\tilde\lambda_1+w_2\tilde\lambda_2\rangle$ may vanish and become poles of \eqref{eq:integrand}, which we classify into two types:
\begin{itemize}
\item Type I: If $\langle \alpha^\vee,w_1\tilde\lambda_1\rangle$ is independent of $s_1$, then $\langle \alpha^\vee,w_1\tilde\lambda_1+w_2\tilde\lambda_2\rangle$ may have a zero at $s_2=n+a\varepsilon$ for some $a \in \{\pm1,\ldots,\pm(n-1)\}$.

\item Type II: If $\langle \alpha^\vee,w_1\tilde\lambda_1\rangle$ depends on $s_1$, $\langle \alpha^\vee,w_1\tilde\lambda_1+w_2\tilde\lambda_2\rangle$ may have a zero when $\re s_1 \pm \re s_2$ is an integer. Our choices for the ranges of $c_1,c_2$ above were contrived so that this happens only if $\eta_2 \geq (1-\eta_2)^2$ and $\re s_2 = n-(1-\eta_2)^2 +a\varepsilon$ for some $a \in \{\pm1,\ldots,\pm(n-1)\}$.
\end{itemize}


\subsubsection*{$s_2$-residues from the denominator}

It turns out that the residues of both Type I and Type II cancel out with one another and vanish. The argument is delegated to Section \ref{sec:poles} below.

\subsubsection*{$s_2$-residue from $M(w_2,\tilde\lambda_2)$}

Consider the $s_2$-residue arising from a pole of $M(w_2,\tilde\lambda_2)$ at $s_2=n$. For this pole to occur, it must be the case that $w_2(n)<w_2(1)$, and moreover that
\begin{align} \label{eq:w*}
w_2(n-k_2+1) &< w_2(n-k_2+2) < \ldots < w_2(n) \\
&< w_2(1) < w_2(2) < \ldots < w_2(n-k_2), \notag
\end{align}
since otherwise $M(w_2,\tilde\lambda_2)=0$, which would cancel the pole. Thus, the only possibility for $w_2$ is the one that swaps the position of the two ``blocks'' --- demarcated by the semicolon in \eqref{eq:lambda} --- so that
\begin{equation*}
w_2\lambda_2(n) \equiv (-n-k_2,\ldots,-n-1;-n,\ldots,-k_2-1) \equiv -\rho.
\end{equation*}
With this, one computes that
\begin{equation*}
\mathrm{Res}_{s_2=n}M(w_2,\lambda_2) = \xi(n,k_2),
\end{equation*}
so the $s_2$-residue of \eqref{eq:integrand} at $s_2 = n$ is equal to $2\pi i$ times
\begin{equation}\label{eq:res-s2}
Mf_1(s_1)Mf_2(n)\xi(n,k_2)\sum_{w_1 \in W} \frac{e^{\langle C, w_1\tilde\lambda_1 - \rho\rangle}}{\prod_{\alpha \in \Delta}\langle \alpha^\vee,w_1\tilde\lambda_1 - \rho \rangle }M(w_1,\tilde\lambda_1).
\end{equation}

\subsubsection*{$s_1$-residue of the $s_2$-residue}

The $s_1$-residue of \eqref{eq:res-s2} is handled already by Proposition \ref{prop:chicken}. This establishes the main contribution to Theorem \ref{thm:res_conc}.

\subsubsection*{$s_1$-residue of the ``$s_2$-remainder''}

We must also consider the $s_1$-residues that may occur as $c_2$ stays at $n-\eta_2$, and $c_1$ moves from $n+2\eta_2-\eta_2^2$ to $n-\eta_1$. There are the following possible spots for a nontrivial $s_1$-residue: 
\begin{enumerate}[(i)]
\item $\re s_1 = n+\eta_2+O(\varepsilon)$,
\item $\re s_1 = n-\eta_2+O(\varepsilon)$,
\item $\re s_1 =n+O(\varepsilon)$.
\end{enumerate}

Case (i) is in fact impossible: otherwise, it must hold that $\langle \alpha^\vee_j, w_1\tilde\lambda_1 + w_2\tilde\lambda_2 \rangle = s_1+s_2+b+O(\varepsilon)$ for some integer $b$; but then, $c_1+c_2 = 2n$ yet $|b|$ cannot exceed $2(n-1)$. Case (ii) is also impossible, since $\eta_2-\eta_1$ is much greater than the vanishing quantity $\varepsilon$. In case (iii), by repeating the argument for the $s_2$-residue computation earlier, the ensuing $s_1$-residue can be seen to be equal to
\begin{align} \label{eq:res-half}
&O(\varepsilon)+\frac{1}{2\pi i}\left(Mf_1(n)\xi(n,k_1) + O(\varepsilon)\right) \\
&\cdot\int_{\re s_2=n-\eta_2} Mf_2(s_2)\sum_{w_2 \in W}\frac{e^{\langle C, w_2\lambda_2-\rho+O(\varepsilon)\rangle}}{\prod_{\alpha \in \Delta}\langle\alpha^\vee,w_2\lambda_2-\rho+O(\varepsilon)\rangle}M(w_2,\lambda_2)ds_2. \notag
\end{align}
The denominator terms here yield no poles; neither can they be $O(\varepsilon)$, since it requires that $\langle\alpha_j^\vee,w_2\lambda_2\rangle=1$ for some $j$, which causes $c(-1)=0$ to appear as a factor of $M(w_2,\lambda_2)$. The $T$-degree can be analyzed as in the proof of Theorem \ref{thm:L1-norm}, with the maximum $T$-degree coming from $w_2 \in W$ such that
\begin{equation*}
w_2\lambda_2(n-\eta_2) = \rho + (0,\ldots,0;\underbrace{\eta_2,\ldots,\eta_2}_{\mbox{\tiny $k_2$ times}}).
\end{equation*}
From this, it can be computed that \eqref{eq:res-half} in the $\varepsilon\rightarrow0$ limit is bounded by $O_{n,f_1,f_2,\eta_2}(T^{-\eta_2 k_2(n-k_2)/2})$.

\subsection{The remaining integral} \label{sec:denom}

We conclude the estimate of \eqref{eq:goal} by proving that the integral
\begin{align} \label{eq:res-remain}
\frac{1}{(2\pi i)^2}&\int_{\re s_1 = c_1}\int_{\re s_2 = c_2} Mf_1(s_1)Mf_2(s_2) \\
&\sum_{w_1,w_2 \in W} \frac{e^{\langle C, w_1\tilde\lambda_1+w_2\tilde\lambda_2 \rangle}}{\prod_{\alpha \in \Delta}\langle \alpha^\vee, w_1\tilde\lambda_1 + w_2\tilde\lambda_2 \rangle} M(w_1,\tilde\lambda_1)M(w_2,\tilde\lambda_2) ds_2 ds_1 \notag
\end{align}
converges, where $c_1= n-\eta_1, c_2=n-\eta_2$. Since $M(w_i,\tilde\lambda_i)$ cannot diverge by our choice of $c_1,c_2$, the only obstacle to its convergence is the possibility that $\langle \alpha^\vee_j, w_1\tilde\lambda_1 + w_2\tilde\lambda_2 \rangle$ equals a multiple of $\varepsilon$ for some $j=1,\ldots,n-1$. For this to happen, it is necessary that $\langle \alpha^\vee_j, w_1\tilde\lambda_1 + w_2\tilde\lambda_2 \rangle$ is independent of both $s_1$ and $s_2$; otherwise, its real part is away from zero by at least $\min(\eta_1,1-\eta_2,\eta_2-\eta_1)$.

Suppose first that $w_2^{-1}(j) + 1 \neq w_2^{-1}(j+1)$.
Then there exists $j' \neq n-k_2$ such that $w_2(j') > w_2(j'+1)$, implying that $M(w_2,\tilde\lambda_2)=0$, since $c(\langle \alpha^\vee_{j'},\tilde\lambda_2\rangle) = c(-1) = 0$ appears as its factor. Thus the summand under question is zero, regardless of $\varepsilon$.

Next suppose that $w_2^{-1}(j) + 1 = w_2^{-1}(j+1)$. Then we must have $w_1^{-1}(j+1) +1 = w_1^{-1}(j)$; in other words, $w_1$ flips the sign of $\alpha_{w_1^{-1}(j+1)}$. Hence $M(w_1,\tilde\lambda_1)$ begets the factor $c(\langle \alpha^\vee_{w_1^{-1}(j+1)},\tilde\lambda_1\rangle) = c(-1+O(\varepsilon)) = O(\varepsilon)$, which balances out the growth of $\langle \alpha^\vee_j, w_1\tilde\lambda_1 + w_2\tilde\lambda_2 \rangle$ as $\varepsilon \rightarrow 0$. This proves that for every $j$ such that $\langle \alpha^\vee_j, w_1\tilde\lambda_1 + w_2\tilde\lambda_2 \rangle = O(\varepsilon)$, there corresponds a factor of $M(w_1,\tilde\lambda_1)$ that cancels it out. This proves that \eqref{eq:res-remain} converges as $\varepsilon \rightarrow 0$.

As for the size of \eqref{eq:res-remain} in terms of $T$, first off we know that it increases with $T$ in general, by \cite[Corollary to Theorem 4]{Kim22}. (Of course, the cases $k_1=k_2=1$ and $k_1=k_2=n-1$ are well-known exceptions.) The trivial upper bound would be $O(T^{\langle \rho^\vee,2\rho\rangle}) = O(T^{n(n^2-1)/6})$. The exponent can be slightly improved by observing that, if $w_2(j) > w_2(j+1)$ for $j\neq n-k_2$, then the corresponding term in \eqref{eq:res-remain} equals zero. Taking this into account, the $w_2$ yielding the maximum $T$-degree is in fact the trivial permutation, so that
\begin{equation*}
w_2\tilde\lambda_2(n-\eta_2) = -\rho + (\underbrace{n,\ldots,n}_{n-k_2};\underbrace{\eta_2,\ldots,\eta_2}_{k_2}).
\end{equation*}
On the other hand, the candidate $w_1$ yielding the maximum $T$-degree is
\begin{equation*}
w_1\tilde\lambda_1(n-\eta_1) = \rho + (0,\ldots,0;\underbrace{\eta_1,\ldots,\eta_1}_{k_1}).
\end{equation*}
With these choices of $w_1,w_2$, it may be said that \eqref{eq:res-remain} is of order $O(T^{\kappa})$, where
\begin{align*}
\kappa := \langle \rho^\vee, w_1\tilde\lambda_1(n-\eta_1)+w_2\tilde\lambda_2(n-\eta_2)\rangle.
\end{align*}
For instance, when $k_1=k_2=k$, we can compute
\begin{align*}
\kappa &= \frac{k(n-k)}{2}(n-\eta_1-\eta_2) < \frac{n^3}{8},
\end{align*}
which is indeed smaller than $n(n^2-1)/6$ for all $n \geq 3$ and $1 \leq k \leq n-1$. This completes the proof that \eqref{eq:res-remain} contributes $O_{n,f_1,f_2,\eta_1,\eta_2}\left(T^\kappa\right)$.

\section{The cancellation of the poles} \label{sec:poles}

\ignore{
\subsection{Some easy opening}

The goal is to correct the error in Sec. 5.3 about the $s_2$-residues not from $M(w_2,\tilde\lambda_2)$. The following proposition provides a helpful starting point.

\begin{proposition} \label{prop:lambda2}
For $w_1,w_2 \in W$, let
\begin{equation} \label{eq:summand}
I(w_1,w_2) = \frac{e^{\langle C, w_1\lambda_1+w_2\lambda_2 \rangle}}{\prod_{\alpha \in \Delta}\langle \alpha^\vee, w_1\lambda_1 + w_2\lambda_2 \rangle} M(w_1,\lambda_1)M(w_2,\lambda_2),
\end{equation}
and similarly for $\tilde I(w_1,w_2)$ with $\lambda_i$ replaced with $\tilde\lambda_i$.
Then both $I(w_1,w_2)$ and $\tilde I(w_1,w_2)$ are nonzero (as a function of $s_1,s_2$) only if $w_2(i) < w_2(j)$ for all $i<j$ in the same block.
\end{proposition}
\begin{proof}
Suppose otherwise, so that there exists $i<j$ in the same block such that $w_2(i) > w_2(j)$. Then there exists $i \leq l < j$ such that $w_2(l) > w_2(l+1)$ so that $M(w_2,\lambda_2)=M(w_2,\tilde\lambda_2)$ has the factor $c(\langle \alpha^\vee_l, \tilde\lambda_2\rangle)$. But this vanishes, since $\langle \alpha^\vee_l, \tilde\lambda_2\rangle=-1$.
\end{proof}
Henceforth one may assume that $w_2 \in W$ preserves the order within each block. Although we do not cite this fact directly below, it is a useful constraint to keep in mind for following our computations.
}

\subsection{The simplest case}

We continue from the context of the discussion in Section \ref{sec:residue}. Recall that we are considering the $s_2$-residues of of Type I and Type II, where $s_1$ and $\varepsilon \neq 0$ are fixed. We write $z=s_2-n-a\varepsilon$ for a residue of Type I, and $z = s_2-s_1+1-a\varepsilon$ for a residue of Type II.

Let us first discuss the simplest case in which $\langle \alpha_j^\vee,w_1\tilde\lambda_1+w_2\tilde\lambda_2\rangle = \pm z$ for precisely one value of $j$. Write
\begin{equation} \label{eq:summand}
\tilde I(w_1,w_2) = \frac{e^{\langle C, w_1\tilde\lambda_1+w_2\tilde\lambda_2 \rangle}}{\prod_{\alpha \in \Delta}\langle \alpha^\vee, w_1\tilde\lambda_1 + w_2\tilde\lambda_2 \rangle} M(w_1,\tilde\lambda_1)M(w_2,\tilde\lambda_2),
\end{equation}
and $w_i' = (j\ j+1)w_i$. We will show that the poles of $\tilde I(w_1,w_2)$ and $\tilde I(w_1',w_2')$ at $z=0$ add up to zero.

Let us examine how each factor of $\tilde I(w_1,w_2)$ changes upon replacing $w_i$ by $w_i'$. Clearly
\begin{equation} \label{eq:sign_change}
\langle \alpha^\vee_j, w'_1\tilde\lambda_1+w'_2\tilde\lambda_2\rangle = - \langle \alpha^\vee_j, w_1\tilde\lambda_1+w_2\tilde\lambda_2\rangle .
\end{equation}
Also,
\begin{equation*}
\langle \alpha^\vee_{j\pm1}, w'_1\tilde\lambda_1+w'_2\tilde\lambda_2\rangle = \langle \alpha^\vee_{j\pm1} + \alpha^\vee_j, w_1\tilde\lambda_1+w_2\tilde\lambda_2\rangle,
\end{equation*}
and other $\langle \alpha^\vee, w_1\tilde\lambda_1+w_2\tilde\lambda_2\rangle$ remain unchanged. Using these, one can also compute that
\begin{equation*}
e^{\langle C, w'_1\tilde\lambda_1+w'_2\tilde\lambda_2\rangle} = e^{\langle C, w_1\tilde\lambda_1+w_2\tilde\lambda_2\rangle}\cdot T^{-\langle \alpha^\vee_{j}, w_1\tilde\lambda_1+w_2\tilde\lambda_2\rangle}.
\end{equation*}
On the other hand, from the functional equation \eqref{eq:fnleq_M} of the intertwining operator $M$, we have
\begin{equation*}
M(w_1',\tilde\lambda_1)M(w_2',\tilde\lambda_2) = c(\langle\alpha^\vee_j,w_1\tilde\lambda_1\rangle)c(\langle\alpha^\vee_j,w_2\tilde\lambda_2\rangle)M(w_1,\tilde\lambda_1)M(w_2,\tilde\lambda_2).
\end{equation*}

For $z$ near zero, one observes that
\begin{align*}
c(\langle \alpha^\vee_{j},w_1\tilde\lambda_1\rangle)c(\langle \alpha^\vee_{j},w_2\tilde\lambda_2\rangle) &= 1+O(z),\\
T^{\langle \alpha^\vee_{j},w_1\tilde\lambda_1+w_2\tilde\lambda_2\rangle} &= 1+O(z), \\
\frac{\langle \alpha^\vee_{j\pm1},w_1\tilde\lambda_1+w_2\tilde\lambda_2\rangle}{\langle \alpha^\vee_{j}+\alpha^\vee_{j\pm1}, w_1\tilde\lambda_1+w_2\tilde\lambda_2\rangle} &= \sum_{p=0}^\infty\left(\frac{\pm z}{\langle \alpha^\vee_{j\pm1},w_1\tilde\lambda_1+w_2\tilde\lambda_2\rangle}\right)^p = 1+O(z).
\end{align*}
Therefore every individual factor of $\tilde I(w'_1,w'_2)$, with the exception of \eqref{eq:sign_change}, differs from the corresponding factor of $\tilde I(w_1,w_2)$ by a multiplicative factor of size $1+O(z)$.

It follows that
\begin{equation*}
\tilde I(w_1,w_2) + \tilde I(w_1',w_2') = O(z\tilde I(w_1,w_2)),
\end{equation*}
which converges to a constant as $z \rightarrow 0$. This implies that the poles of $\tilde I(w_1,w_2)$ and $\tilde I(w_1',w_2')$ at $z=0$ indeed cancel with each other, as claimed.

\subsection{Multiple poles}

We proceed to consider the general case, in which $\langle \alpha_j^\vee,w_1\tilde\lambda_1+w_2\tilde\lambda_2\rangle = \pm z$ for more than one value of $j$. We first observe that it cannot be the case for two consecutive values, say $j$ and $j+1$. If this were the case, then $\langle \alpha^\vee_j,w_1\tilde\lambda_1+w_2\tilde\lambda_2\rangle = \pm z$ and $\langle \alpha^\vee_{j+1},w_1\tilde\lambda_1+w_2\tilde\lambda_2\rangle =\mp z$; however, this implies that $(w_1\tilde\lambda_1)_j=(w_1\tilde\lambda_1)_{j+2}$, which is impossible. Thus if we write $j_1 < \ldots < j_r$ for all the indices such that $\langle \alpha_{j_i}^\vee,w_1\tilde\lambda_1+w_2\tilde\lambda_2\rangle = \pm z$, then $j_{i}-j_{i-1} \geq 2$.

Write $\tau_i = (j_i \ j_i+1)$, and for $e \in \{0,1\}^r$, write $\tau^e = \prod_i \tau_i^{e_i}$. We will show that 
\begin{equation*}
\sum_{e \in \{0,1\}^r} \tilde I(\tau^{e}w_1,\tau^{e}w_2) = O(z^r\tilde I(w_1,w_2)),
\end{equation*}
which immediately implies that it has zero residue at $z=0$, or equivalently $s_2=n+a\varepsilon$.

To this end, let us introduce a few additional notations. We write
\begin{align*}
\mu_{j_i} &= c(\langle \alpha^\vee_{j_i},w_1\tilde\lambda_1\rangle)c(\langle \alpha^\vee_{j_i},w_2\tilde\lambda_2\rangle)\cdot T^{-\langle \alpha^\vee_{j_i},w_1\tilde\lambda_1+w_2\tilde\lambda_2\rangle}, \\
\underline{\Delta}_{j_i-1} &= \frac{\langle \alpha^\vee_{j_i-1},w_1\tilde\lambda_1+w_2\tilde\lambda_2\rangle }{\langle \alpha^\vee_{j_i-1}+\alpha^\vee_{j_i}, w_1\tilde\lambda_1+w_2\tilde\lambda_2\rangle} \cdot {T^{\frac{1}{2}(j_i-1)(n-j_i+1)\langle \alpha^\vee_{j_i},w_1\tilde\lambda_1+w_2\tilde\lambda_2\rangle}}, \\
\bar\Delta_{j_i+1} &= \frac{\langle \alpha^\vee_{j_i+1},w_1\tilde\lambda_1+w_2\tilde\lambda_2\rangle }{\langle \alpha^\vee_{j_i}+\alpha^\vee_{j_i+1}, w_1\tilde\lambda_1+ w_2\tilde\lambda_2\rangle}\cdot {T^{\frac{1}{2}(j_i+1)(n-j_i-1)\langle \alpha^\vee_{j_i},w_1\tilde\lambda_1+w_2\tilde\lambda_2\rangle}},
\end{align*}
and $\mathcal E_i = \mu_{j_i}\underline{\Delta}_{j_i-1}\overline\Delta_{j_i+1}$. Arguments from the previous section show that $\mu_{j_i},\underline{\Delta}_{j_i-1},\overline\Delta_{j_i+1}$, and $\mathcal E_i$ are all terms of size $1+O(z)$, and that
\begin{equation*}
\tilde I(\tau_iw_1,\tau_iw_2) = -\tilde I(w_1,w_2) \cdot \mathcal E_i.
\end{equation*}

Informally speaking, $\underline{\Delta}_{j_i-1}$ account for the change in the ``($j_i-1$)-st term'' of $\tilde I(w_1,w_2)$ caused by composing $w_1$ and $w_2$ by $\tau_i$; similarly for $\overline\Delta_{j_i+1}$ and $\mu_{j_i}$.

\subsubsection{All zeros are apart}

The easiest case to handle is in which $j_{i+1}-j_i > 2$ for all $i=1,\ldots,r-1$. Then we have
\begin{equation*}
\tilde I(\tau^ew_1,\tau^ew_2) = \tilde I(w_1,w_2)\prod_{i:e_i\neq 0} (-\mathcal E_i),
\end{equation*}
that is, the errors are multiplicative. This implies that 
\begin{equation*}
\sum_{e \in \{0,1\}^r} \tilde I(\tau^{e}w_1,\tau^{e}w_2)=\tilde I(w_1,w_2)\prod_{i=1}^r(1-\mathcal E_i) = O(z^r\tilde I(w_1,w_2)),
\end{equation*}
which is $O(1)$ as $z \rightarrow 0$, showing that the residues indeed cancel to zero.

\subsubsection{All zeros are together}

We next consider the polar opposite case in which $j_{i+1}-j_i=2$ for all $i=1,\ldots,r-1$. The errors are no longer strictly multiplicative, which comes from the fact that both $\tau_i$ and $\tau_{i+1}$ alter $\langle \alpha^\vee_{j_i+1},w_1\tilde\lambda_1+w_2\tilde\lambda_2\rangle$. Still, it turns out that an analogous principle continues to hold.

For $e \in \{0,1\}^r$, we can write
\begin{equation*}
\tilde I(\tau^ew_1,\tau^ew_2) = (-1)^{|\{i:e_i=1\}|} \tilde I(w_1,w_2) \cdot \mathcal E_e,
\end{equation*}
where
\begin{align*}
\mathcal E_e = \prod_{\ell=j_1-1}^{j_r+1} \mathcal L_\ell,
\end{align*} 
and
\begin{align*}
\mathcal L_\ell = \mathcal L_{e,\ell} = \begin{cases}
\mu_{j_i} & \mbox{if $\ell = j_i$ and $e_i=1$} \\
\underline{\Delta}_{j_i-1} & \mbox{if $\ell = j_i-1$, and $e_i=1$ and $e_{i-1}$=0} \\
\overline {\Delta}_{j_i+1} & \mbox{if $\ell = j_i+1$, and $e_i=1$ and $e_{i+1} = 0$} \\
\underline{\overline{\Delta}}_{j_i+1} & \mbox{if $\ell = j_i+1$, and $e_i=1$ and $e_{i+1} = 1$.}
\end{cases}
\end{align*}
Here we understand $e_0=e_{r+1}=0$ always, $\mu_{j_i}, \underline{\Delta}_{j_i-1},\overline {\Delta}_{j_i+1}$ are as earlier, and
\begin{align*}
\underline{\overline{\Delta}}_{j_i+1} = \frac{\langle \alpha^\vee_{j_i+1}, w_1\tilde\lambda_1+w_2\tilde\lambda_2 \rangle}{\langle \alpha^\vee_{j_i}+\alpha^\vee_{j_i+1}+\alpha^\vee_{j_{i+1}}, w_1\tilde\lambda_1+w_2\tilde\lambda_2 \rangle} \cdot T^{\frac{1}{2}(j_i+1)(n-j_i-1)\langle \alpha^\vee_{j_i}+\alpha^\vee_{j_{i+1}}, w_1\tilde\lambda_1+w_2\tilde\lambda_2 \rangle}.
\end{align*}

\begin{lemma} \label{lemma:approx_cancel}
$\underline{\overline{\Delta}}_{j_i+1} = \overline{\Delta}_{j_i+1}\underline{\Delta}_{j_{i+1}-1} + O(z^2).$
\end{lemma}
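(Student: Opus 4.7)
The plan is to exploit the hypothesis $j_{i+1}=j_i+2$ — which is the defining feature of the ``all zeros are together'' subcase — so that the shared index $j_i+1=j_{i+1}-1$ makes the product $\overline{\Delta}_{j_i+1}\,\underline{\Delta}_{j_{i+1}-1}$ a natural two-factor deformation of the single quantity $\underline{\overline{\Delta}}_{j_i+1}$. My first step is to align the $T$-exponents. Using the arithmetic identity $(j_{i+1}-1)(n-j_{i+1}+1)=(j_i+1)(n-j_i-1)$, the $T$-exponents in the product add to
$$\frac{1}{2}(j_i+1)(n-j_i-1)\,\langle \alpha^\vee_{j_i}+\alpha^\vee_{j_{i+1}},\,w_1\tilde\lambda_1+w_2\tilde\lambda_2\rangle,$$
which is exactly the $T$-exponent appearing in $\underline{\overline{\Delta}}_{j_i+1}$. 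Thus the $T$-factors cancel on the nose, and the lemma reduces to a statement purely about the rational coefficients.

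My second step is to set $\mu := w_1\tilde\lambda_1+w_2\tilde\lambda_2$ and abbreviate
$$a := \langle \alpha^\vee_{j_i},\mu\rangle, \quad b := \langle \alpha^\vee_{j_i+1},\mu\rangle, \quad c := \langle \alpha^\vee_{j_{i+1}},\mu\rangle.$$
By construction of the bad indices $j_1<\cdots<j_r$ in Section \ref{sec:poles}, one has $a=\pm z$ and $c=\pm z$, while the index $j_i+1$ lies strictly between $j_i$ and $j_{i+1}=j_i+2$ and is therefore not bad, so $b$ is a generic nonzero quantity of size $O(1)$ as $z\to 0$ (for fixed $s_1$ and small $\varepsilon$). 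The claim then reduces to
$$\frac{b}{a+b+c}=\frac{b^2}{(a+b)(b+c)}+O(z^2).$$

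The final step is a short geometric-series expansion in the small parameters $a,c$. Expanding each side to order $z^2$,
$$\frac{b}{a+b+c}=1-\frac{a+c}{b}+\frac{(a+c)^2}{b^2}+O(z^3),$$
$$\frac{b^2}{(a+b)(b+c)}=1-\frac{a+c}{b}-\frac{ac}{b^2}+\frac{(a+c)^2}{b^2}+O(z^3),$$
so their difference is $ac/b^2+O(z^3)=O(z^2)$, as required. I anticipate no real obstacle here — the whole lemma is a partial-fraction identity plus Taylor expansion, and in fact a direct calculation gives the exact identity $\underline{\overline{\Delta}}_{j_i+1}-\overline{\Delta}_{j_i+1}\underline{\Delta}_{j_{i+1}-1}=\frac{abc}{(a+b+c)(a+b)(b+c)}\cdot F$ with $F$ the common $T$-factor. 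The lemma's true role is structural: it lets me treat $\underline{\overline{\Delta}}_{j_i+1}$ as if it were $\overline{\Delta}_{j_i+1}\underline{\Delta}_{j_{i+1}-1}$ up to controlled error, so that the induction in the ``all zeros together'' case can be reduced to the clean multiplicative pattern already handled in the ``all zeros apart'' case.
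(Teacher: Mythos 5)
Your proof is correct and matches the paper's in substance; the paper splits into two cases according to whether $\langle\alpha^\vee_{j_i},\mu\rangle$ and $\langle\alpha^\vee_{j_{i+1}},\mu\rangle$ have the same or opposite sign, and computes each difference directly, whereas you unify both cases by setting $a,c=\pm z$ and computing the single exact identity
\begin{equation*}
\frac{b}{a+b+c}-\frac{b^2}{(a+b)(b+c)}=\frac{abc}{(a+b+c)(a+b)(b+c)},
\end{equation*}
which is tidier (and your observation that the $T$-exponents match exactly because $(j_{i+1}-1)(n-j_{i+1}+1)=(j_i+1)(n-j_i-1)$ is precisely the point the paper uses implicitly). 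The only thing to be slightly careful of, which both your argument and the paper's leave implicit, is that $b=\langle\alpha^\vee_{j_i+1},\mu\rangle$ is bounded away from zero (uniformly for the relevant $s_1$ and small $\varepsilon$): that $j_i+1$ is not among the bad indices rules out $b=\pm z$, but a genuinely uniform lower bound on $|b|$ deserves a sentence of justification so the implied constant in $O(z^2)$ does not degenerate.
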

\begin{proof}
Suppose first that the signs of $\langle \alpha^\vee_{j_i},w_1\tilde\lambda_1+w_2\tilde\lambda_2\rangle$ and $\langle \alpha^\vee_{j_{i+1}},w_1\tilde\lambda_1+w_2\tilde\lambda_2\rangle$ are equal; without loss of generality, we assume both are equal to $z$.
One then observes that
\begin{align*}
\underline{\overline{\Delta}}_{j_i+1} - \overline{\Delta}_{j_i+1}\underline{\Delta}_{j_{i+1}-1} = T^{(j_i+1)(n-j_i-1)z}\left(\frac{u}{2z+u} - \left(\frac{u}{z+u}\right)^2\right),
\end{align*}
where we temporarily denote
\begin{align*}
u = \langle \alpha^\vee_{j_i+1},w_1\tilde\lambda_1+w_2\tilde\lambda_2\rangle.
\end{align*}
It can be easily seen that
\begin{align*}
\frac{u}{2z+u} - \left(\frac{u}{z+u}\right)^2= O(z^2),
\end{align*}
which is the desired result. In case the signs of $\langle \alpha^\vee_{j_i},w_1\tilde\lambda_1+w_2\tilde\lambda_2\rangle$ and $\langle \alpha^\vee_{j_{i+1}},w_1\tilde\lambda_1+w_2\tilde\lambda_2\rangle$ differ, we have
\begin{align*}
\underline{\overline{\Delta}}_{j_i+1} - \overline{\Delta}_{j_i+1}\underline{\Delta}_{j_{i+1}-1} &= 1-\frac{u}{z+u}\cdot\frac{u}{-z+u} \\
&=O(z^2)
\end{align*}
again, as desired.
\end{proof}

The point of this lemma is that the errors are ``almost multiplicative'' in a certain sense.
We will use this to prove
\begin{equation*}
\sum_{e\in\{0,1\}^r} \tilde I(\tau^ew_1,\tau^ew_2) = \tilde I(w_1,w_2)\sum_{e\in\{0,1\}^r} (-1)^{|\{i:e_i=1\}|} \mathcal E_e = O(z^r\tilde I(w_1,w_2)).
\end{equation*}

The base case $r=1$ is precisely the simplest case proved above.
In general, we write
\begin{align} \label{eq:complicated_sum}
&\sum_{e\in\{0,1\}^r} (-1)^{|\{i:e_i=1\}|} \mathcal E_e \\ \notag
&= \sum_{e\in\{0,1\}^r \atop e_{r-1}=0,e_r=0} (-1)^{|\{i:e_i=1\}|} \prod_{\ell=j_1-1}^{j_{r-2}+1}\mathcal L_\ell \cdot (1-\mu_r\overline{\Delta}_{j_r+1}\underline{\Delta}_{j_r-1}) \\ \notag &\hspace{4mm}+ \sum_{e \in \{0,1\}^r \atop e_{r-1}=1,e_r=0}(-1)^{|\{i:e_i=1\}|} \prod_{\ell=j_1-1}^{j_{r-2}+1}\mathcal L_\ell \cdot(\mu_{r-1}\overline{\Delta}_{j_{r-1}+1} - \mu_{r-1}\mu_r\overline{\underline{\Delta}}_{j_{r-1}+1}\overline{\Delta}_{j_r+1}).
\end{align}
By the induction hypothesis applied to $r-2$,
\begin{equation}\label{eq:complicated_sums}
\sum_{e \in \{0,1\}^r \atop e_{r-1}=e_r=0}(-1)^{|\{i:e_i=1\}|} \prod_{\ell=j_1-1}^{j_{r-2}+1}\mathcal L_\ell, \sum_{e \in \{0,1\}^r \atop e_{r-1}=1, e_r=0}(-1)^{|\{i:e_i=1\}|} \prod_{\ell=j_1-1}^{j_{r-2}+1}\mathcal L_\ell 
\end{equation}
are both of order $O(z^{r-2})$. 
For the first sum, this is clear.
As for the second sum, by two applications of the induction hypothesis,
\begin{align*}
&\sum_{e \in \{0,1\}^r \atop e_{r-2}=0, e_{r-1}=1, e_r=0}(-1)^{|\{i:e_i=1\}|} \prod_{\ell=j_1-1}^{j_{r-2}}\mathcal L_\ell \cdot \underline{\Delta}_{j_{r-2}+1} \\
&\hspace{4mm}+ \sum_{e \in \{0,1\}^r \atop e_{r-2}=1, e_{r-1}=1, e_r=0}(-1)^{|\{i:e_i=1\}|} \prod_{\ell=j_1-1}^{j_{r-2}}\mathcal L_\ell \cdot (\overline\Delta_{j_{r-2}+1}\underline{\Delta}_{j_{r-2}+1}+O(z^2)) \\
&= O(z^{r-2}\underline{\Delta}_{j_{r-2}+1}) + \sum_{e \in \{0,1\}^r \atop e_{r-2}=1, e_{r-1}=1, e_r=0}(-1)^{|\{i:e_i=1\}|} \prod_{\ell=j_1-1}^{j_{r-2}}\mathcal L_\ell \cdot O(z^2) \\
&= O(z^{r-2}) + O(z^{r-1}) = O(z^{r-2})
\end{align*}
for $z$ near zero.

Moreover, recall that the first sum in \eqref{eq:complicated_sums} equals
\begin{align*}
&\sum_{e \in \{0,1\}^r \atop e_{r-2}=e_{r-1}=e_r=0}(-1)^{|\{i:e_i=1\}|} \prod_{\ell=j_1-1}^{j_{r-2}}\mathcal L_\ell + \sum_{e \in \{0,1\}^r \atop e_{r-2}=1, e_{r-1}= e_r=0}(-1)^{|\{i:e_i=1\}|} \prod_{\ell=j_1-1}^{j_{r-2}}\mathcal L_\ell \cdot \overline\Delta_{j_{r-2}+1}.
\end{align*}
It follows by adding this expression to the previous equality that the two sums in \eqref{eq:complicated_sums} add to $O(z^{r-1})$.

Thus \eqref{eq:complicated_sum} becomes
\begin{align*}
&\sum_{e\in\{0,1\}^r \atop e_{r-1}=e_r=0} (-1)^{|\{i:e_i=1\}|} \prod_{\ell=j_1-1}^{j_{r-2}+1}\mathcal L_\ell \cdot (1-\mu_r\overline{\Delta}_{j_r+1}\underline{\Delta}_{j_r-1} -\mu_{r-1}\overline{\Delta}_{j_{r-1}+1} \\
&\hspace{4mm}+\mu_{r-1}\mu_r\overline{\underline{\Delta}}_{j_{r-1}+1}\overline{\Delta}_{j_r+1})
+ O(z^{r-1}(\mu_{r-1}\overline{\Delta}_{j_{r-1}+1} - \mu_{r-1}\mu_r\overline{\underline{\Delta}}_{j_{r-1}+1}\overline{\Delta}_{j_r+1})) \\
&=O(z^{r-2}(1-\mu_r\overline{\Delta}_{j_r+1}\underline{\Delta}_{j_r-1} -\mu_{r-1}\overline{\Delta}_{j_{r-1}+1} + \mu_{r-1}\mu_r\overline{\underline{\Delta}}_{j_{r-1}+1}\overline{\Delta}_{j_r+1})) \\
&\hspace{4mm}+ O(z^{r-1}(\mu_{r-1}\overline{\Delta}_{j_{r-1}+1} - \mu_{r-1}\mu_r\overline{\underline{\Delta}}_{j_{r-1}+1}\overline{\Delta}_{j_r+1})).
\end{align*}
By Lemma \ref{lemma:approx_cancel}, this is
\begin{align*}
&O(z^{r-2}(1-\mu_r\overline{\Delta}_{j_r+1}\underline{\Delta}_{j_r-1})(1-\mu_{r-1}\overline{\Delta}_{j_{r-1}+1})) + O(z^r) \\
&+O(z^{r-1}\mu_{r-1}\overline{\Delta}_{j_{r-1}+1}(1-\mu_{r-1}\underline{\Delta}_{j_r-1}\overline{\Delta}_{j_r+1} ))+O(z^{r+1}) \\
&=O(z^r).
\end{align*}

\subsubsection{Mixed case}

Lastly, consider the case in which there are $p > 1$ ``consecutive zeros,'' each of length $r_\ell$, so that $j_{i+1}-j_i = 2$ if and only if $i \neq r_1 + \ldots + r_{\ell}$ for $\ell=1,\ldots,p-1$. Write
\begin{align*}
\{0,1\}^r = \{0,1\}^{r_1} \times \cdots \times \{0,1\}^{r_p},
\end{align*}
and identify $\{0,1\}^{r_\ell}$ with the $\ell$-th component of this expression, with all other components being zero. By the work of the previous section, for any $e' \in \{0,1\}^r$ whose $\ell$-th component is zero,
\begin{align*}
\sum_{e \in \{0,1\}^{r_\ell}}\tilde I(\tau^{e+e'}w_1,\tau^{e+e'}w_2) = \tilde I(\tau^{e'}w_1,\tau^{e'}w_2)\mathcal F_\ell 
\end{align*}
for some expression $\mathcal F_\ell$ of order $O(z^{r_\ell})$. This implies
\begin{align*}
\sum_{e \in \{0,1\}^{r}}\tilde I(\tau^{e}w_1,\tau^{e}w_2) = \tilde I(w_1,w_2)\mathcal F_1\cdots\mathcal F_p = O(z^r\tilde I(w_1,w_2)),
\end{align*}
as claimed.

\ignore{
We consider a situation in which $I(w_1,w_2)$ is possibly singular, that is, not assigned a finite value.
Due to the shape of $\lambda_2$ and our assumption about $w_2$, $M(w_2,\lambda_2)$ can be singular only when $w_2\lambda_2(n) = -\rho$, which happens if and only if $w_2$ is the unique permutation satisfying \eqref{eq:w*}. Henceforth we assume that $w_2$ is not this element; in other words, $w_2(1)<w_2(n)$ from now on. We assume $\re s_1= n+1/2$ so that $M(w_1,\lambda_1)$ cannot be singular for the time being. This leaves only the factors $\langle \alpha^\vee, w_1\lambda_1 + w_2\lambda_2 \rangle$ in the denominator to consider. There are two possibilities:
\begin{enumerate}[(i)]
\item $\langle \alpha^\vee_j,w_1\lambda_1+w_2\lambda_2\rangle=0$ independently of $s_1$ or $s_2$.
\item $\langle \alpha^\vee_j,w_1\lambda_1+w_2\lambda_2\rangle=\pm(s_2-n)$. 
\end{enumerate}
(i) is easier to handle. In that case, equivalently if $\langle \alpha^\vee_j,w_1\tilde\lambda_1+w_2\tilde\lambda_2\rangle=O(\varepsilon)$, then by the assumption on $w_2$ it is forced that $\langle \alpha^\vee_j,w_2\tilde\lambda_2\rangle = -1$, and so $\langle \alpha^\vee_j,w_1\tilde\lambda_1\rangle = 1+O(\varepsilon)$. This means that $w_1$ swaps the order of $w_1^{-1}(j+1)$ and $w_1^{-1}(j)=w_1^{-1}(j+1)+1$, which causes $c(\langle \alpha^\vee_{w_1^{-1}(j+1)},\tilde\lambda_1\rangle) = O(\varepsilon)$ to appear as a factor of $M(w_1,\tilde\lambda_1)$. As $\varepsilon \rightarrow 0$, the factor
\begin{equation*}
\frac{c(\langle \alpha^\vee_{w_1^{-1}(j+1)},\tilde\lambda_1\rangle)}{\langle\alpha^\vee_j,w_1\tilde\lambda_1+w_2\tilde\lambda_2\rangle}
\end{equation*}
converges to a finite number, that is, they ``cancel each other out.'' Note the clear correspondence between these factors.

Case (ii) is much harder, and will take up most of this section. Indeed, it is possible for $\tilde I(w_1,w_2)$ to have a pole near $s_2=n$, even with all our assumptions about $w_2$. What we will argue is that for any $(w_1,w_2) \in W^2$, there exists a set $P \in W^2$ containing it so that the sum of $\tilde I$'s over $P$ will stay finite as $\varepsilon \rightarrow 0$. In other words, most singularities of \eqref{eq:integrand} will cancel with one another.

\subsection{The simplest case}

We demonstrate the phenomenon with the simplest case in which there exists precisely one $j$ such that (ii) above holds. We will take $w_i' = (j \ j+1) w_i$, and show that $\tilde I(w_1,w_2) + \tilde I(w'_1,w'_2)$ converges as $\varepsilon \rightarrow 0$.

We study how each term of $\tilde I(w'_1,w'_2)$ differs from its counterpart in $\tilde I(w_1,w_2)$. First of all, we have at $s_2=n$
\begin{equation*}
\langle \alpha^\vee_j,w_1\tilde\lambda_1+w_2\tilde\lambda_2\rangle= a\varepsilon,
\langle \alpha^\vee_j,w'_1\tilde\lambda_1+w'_2\tilde\lambda_2\rangle= -a\varepsilon
\end{equation*}
for some integer $a \neq 0$. For $\alpha^\vee = \alpha^\vee_{j-1}$ or $\alpha^\vee_{j+1}$, $\langle \alpha^\vee,w_1\tilde\lambda_1+w_2\tilde\lambda_2\rangle$ and $\langle \alpha^\vee,w'_1\tilde\lambda_1+w'_2\tilde\lambda_2\rangle$ differ by a factor of $1+O(\varepsilon)$; for all other choices of $\alpha^\vee$, they are identical. These also imply that $e^{\langle C,w_1\tilde\lambda_1+w_2\tilde\lambda_2\rangle}$ and $e^{\langle C,w'_1\tilde\lambda_1+w'_2\tilde\lambda_2\rangle}$ differ by a factor of $1+O(\varepsilon)$ as well.

On the other hand, from the functional equation \eqref{eq:fnleq_M}, we have
\begin{align*}
M(w'_i,\lambda_i) &= M((j \ j+1),w_i\lambda_i)M(w_i,\lambda_i) \\
&= c(\langle \alpha^\vee_j, w_i\lambda_i\rangle)M(w_i,\lambda_i),
\end{align*}
since $\alpha_j$ is the only positive root whose sign is changed by $(j \ j+1)$. By the symmetry of $c(z)$, we have $c(\langle \alpha^\vee_j, w_1\lambda_1\rangle) c(\langle \alpha^\vee_j, w_2\lambda_2\rangle) = 1$, and thus 
\begin{equation*}
M(w'_1,\lambda_1)M(w'_2,\lambda_2) = M(w_1,\lambda_1)M(w_2,\lambda_2),
\end{equation*}
or in other words,
\begin{align*}
M(w'_1,\tilde\lambda_1)M(w'_2,\tilde\lambda_2) &= \frac{c(\langle \alpha^\vee_j, w_1\tilde\lambda_1\rangle)}{c(\langle \alpha^\vee_j, w_1\lambda_1\rangle)}
M(w_1,\tilde\lambda_1)M(w_2,\tilde\lambda_2) \\
&= (1+O(\varepsilon))M(w_1,\tilde\lambda_1)M(w_2,\tilde\lambda_2).
\end{align*}

Collecting all our observations so far, we find that
\begin{equation*}
\langle \alpha^\vee_j,w_1\tilde\lambda_1+w_2\tilde\lambda_2\rangle\tilde I(w_1,w_2) + \langle\alpha^\vee_j,w'_1\tilde\lambda_1+w'_2\tilde\lambda_2\rangle\tilde I(w'_1,w'_2) = O(\varepsilon)
\end{equation*}
and thus
\begin{equation*}
\tilde I(w_1,w_2) + \tilde I(w'_1,w'_2) = O(1)
\end{equation*}
as desired.

In general, the analysis becomes far more complicated: it can take more than just two pairs for the cancellation, and the errors lumped into $O(\varepsilon)$ above require much greater scrutiny. In the next few sections, we identify a couple of patterns that make up the general situation, and study those one at a time. Then we combine them to extend the logic just demonstrated above.

\subsection{Consecutive zeros}

Let us consider the situation in which $\langle \alpha^\vee,w_1\lambda_1+ w_2\lambda_2\rangle = \pm(s_2-n)$ for $\alpha^\vee = \alpha^\vee_{j}, \ldots,\alpha^\vee_{j+k-1}$, and $\langle \alpha^\vee,w_1\lambda_1+ w_2\lambda_2\rangle \neq \pm(s_2-n)$ for $\alpha^\vee = \alpha^\vee_{j-1}$ and $\alpha^\vee_{j+k}$. {\color{red} boundaries can be identically zero however, and I must account for this carefully} For this to be the case, $w_2^{-1}(j), w_2^{-1}(j+1),\ldots,w_2^{-1}(j+k)$ must come from alternating blocks. 

We divide cases according to whether $w_2^{-1}(j)$ is in the first or the second block. First suppose it is in the first block. Then, for some integers $c,d>0$ such that $c+d=n$,
\begin{align*}
\langle \alpha^\vee, w_2\lambda_2\rangle &= s_2-c, -s_2+(c-1), s_2-c, -s_2+(c-1), \ldots \\
\langle \alpha^\vee, w_1\lambda_1\rangle &= -d, d+1, -d, d+1,\ldots
\end{align*}
for $\alpha^\vee = \alpha^\vee_j,\ldots,\alpha^\vee_{j+k-1}$. This implies that if $w_1^{-1}(j) = x$, then $w_1^{-1}(j+1) = x+d=:y$, and $w_1^{-1}(j+2) = x-1, w_1^{-1}(j+3)= y-1,$ and so on. By the shape of $\lambda_1$, this implies that $c(\langle \alpha^\vee,\tilde\lambda_1 \rangle)$ appears as a factor of $M(w_1,\tilde\lambda_1)$ in $\tilde I(w_1,w_2)$ for $\alpha^\vee = \alpha^\vee_{w_1^{-1}(j+2)}, \alpha^\vee_{w_1^{-1}(j+3)},\alpha^\vee_{w_1^{-1}(j+2)},\ldots,\alpha^\vee_{w_1^{-1}(j+k)}$. {\color{red} may help to have a picture here.} Therefore we can arrange the following cancellations: for $i=1,\ldots,k-1$, at $s_2=n$,
\begin{align*}
\frac{c(\langle \alpha^\vee_{w^{-1}(j+i+1)},\tilde\lambda_1\rangle)}{\langle \alpha^\vee_{j+i},w_1\tilde\lambda_1+ w_2\tilde\lambda_2\rangle} &= \frac{c(-1+\varepsilon)}{d\varepsilon} \mbox{ or } \frac{c(-1+\varepsilon)}{-(d+1)\varepsilon} \\
&= \frac{\pi}{-6d}(1+O(\varepsilon)) \mbox{ or } \frac{\pi}{6(d+1)}(1+O(\varepsilon)),
\end{align*}
according to whether $i$ is even or odd, respectively.

The situation is analogous when $w^{-1}_2(j)$ lies in the second block. In that case, we have
\begin{align*}
\langle \alpha^\vee, w_2\lambda_2\rangle &= -s_2+(c-1), s_2-c, -s_2+(c-1), s_2-c, \ldots \\
\langle \alpha^\vee, w_1\lambda_1\rangle &= d+1,-d,d+1,-d,\ldots
\end{align*}
for $\alpha^\vee = \alpha^\vee_j,\ldots,\alpha^\vee_{j+k-1}$, and for $i=1,\ldots,k-1,$
\begin{align*}
\frac{c(\langle \alpha^\vee_{w^{-1}(j+i+1)},\tilde\lambda_1\rangle)}{\langle \alpha^\vee_{j+i},w_1\tilde\lambda_1+ w_2\tilde\lambda_2\rangle} &= \frac{c(-1+\varepsilon)}{d\varepsilon} \mbox{ or } \frac{c(-1+\varepsilon)}{-(d+1)\varepsilon},
\end{align*}
according to whether $i$ is odd or even, respectively. To recap, in a set of consecutive denominator zeros, all but one cancel out.

Suppose $I(w_1,w_2)$ has precisely one consecutive zero, in the sense of our discussion so far. Let
\begin{equation*}
P = \{(ww_1,ww_2) \in W^2: w \in S_{\{j,\ldots,j+k\}}\}.
\end{equation*}
We claim that
\begin{equation} \label{eq:cc}
\sum_{(w_1',w_2') \in P} \tilde I(w_1',w_2') = O(1)
\end{equation}
as $\varepsilon \rightarrow 0$ at $s_2=n$.

It is first necessary to understand how $\tilde I(w_1',w_2')$ compare to $\tilde I(w_1,w_2)$. We continue to write $w_i'=ww_i$. We may assume that $w_2'$ also preserves the orders within each block, because if not then $\tilde I(w_1',w_2')=0$. Now recall
\begin{equation*}
(w_1\lambda_1+w_2\lambda_2)_j = (w_1\lambda_1+w_2\lambda_2)_{j+1} = \ldots = (w_1\lambda_1+w_2\lambda_2)_{j+k};
\end{equation*}
clearly the same holds if we replace $w_i$ by $w_i'$. Therefore, the $e^{\langle T,w_1\tilde\lambda_1+w_2\tilde\lambda_2\rangle}$ component changes by a factor of $1+O(\varepsilon)$. Clearly so do $\langle \alpha^\vee_{j-1},w_1\tilde\lambda_1+w_2\tilde\lambda_2\rangle$ and $\langle \alpha^\vee_{j+k},w_1\tilde\lambda_1+w_2\tilde\lambda_2\rangle$. In addition, we claim that
\begin{equation*}
M(w'_1,\lambda_1)M(w'_2,\lambda_2) = M(w_1,\lambda_1)M(w_2,\lambda_2),
\end{equation*}
so that
\begin{equation*}
M(w'_1,\lambda_1)M(w'_2,\lambda_2) = M(w_1,\lambda_1)M(w_2,\lambda_2)(1+O(\varepsilon)).
\end{equation*}
It suffices to show that
\begin{equation*}
M(w,w_1\lambda_1)M(w,w_2\lambda_2)=1,
\end{equation*}
whose proof is as follows. 
We color each of the numbers $j,j+1,\ldots,j+k$ red(blue) if its preimage by $w_2$ is from the first(second) block. {\color{red} may help to have a figure here} As $w \in S_{\{j,\ldots,j+k\}}$ permutes $j,j+1,\ldots,j+k$, it also permutes those colors. We may decompose $w$ into transpositions
\begin{equation*}
w = w_{(r)} = (\gamma_r\ \gamma_r+1) \cdots (\gamma_2 \ \gamma_2+1)(\gamma_1 \ \gamma_1+1)
\end{equation*}
such that $(\gamma_i\ \gamma_i+1)$ always swaps the red and blue numbers (from the coloring already permuted by the preceding transpositions $w_{(i-1)}$). On the other hand, we have $(w_1\lambda_1+w_2\lambda_2)_j = \ldots = (w_1\lambda_1+w_2\lambda_2)_{j+k}$, a property that is also preserved by every $w_{(i)}$. Thus
\begin{align*}
&M( (\gamma_i\ \gamma_i+1), w_{(i-1)}w_1\lambda_1)M( (\gamma_i\ \gamma_i+1), w_{(i-1)}w_2\lambda_2) \\
&= c(\langle \alpha^\vee_{\gamma_i},w_{(i-1)}w_1\lambda_1\rangle)c(\langle \alpha^\vee_{\gamma_i},w_{(i-1)}w_2\lambda_2\rangle)\\
&=1.
\end{align*}
This proves the claim.

For the terms $\langle \alpha^\vee,w_1\tilde\lambda_1+w_2\tilde\lambda_2\rangle$ for $\alpha^\vee=\alpha^\vee_j,\ldots,\alpha^\vee_{j+k-1}$, it is necessary to take a closer look. There still is the cancellation phenomenon for the terms
\begin{equation*}
\frac{c(\langle \alpha^\vee_{w^{-1}(j+i+1)},\tilde\lambda_1\rangle)}{\langle \alpha^\vee_{j+i},w'_1\tilde\lambda_1+ w'_2\tilde\lambda_2\rangle},
\end{equation*}
but its ratio in the $\varepsilon\rightarrow0$ limit is different from earlier in general. Basically we wish to show
\begin{equation} \label{eq:cc_core}
\sum_{(w_1',w_2') \in P} \prod_{i=0}^{k-1}\frac{1}{\langle \alpha^\vee_{j+i},w'_1\tilde\lambda_1+w'_2\tilde\lambda_2 \rangle} = 0.
\end{equation}
This will imply \eqref{eq:cc}.


Let us abstract out situation a bit before moving forward. 
Suppose $d,L,R$ are integers with $R \leq d$, and let $\mathfrak L\in W$ be the set of all permutations $w$ that maps $\{1,\ldots,L\} \cup \{L+d-R+1, \ldots,L+d\}$ bijectively onto $\{1,\ldots,L+R\}$ (and maps other elements in a pre-chosen manner), subject to the rule that $w(1)<\ldots<w(L)$ and $w(L+d-R+1)<\ldots<w(L+d)$, and that $w(1)=L+R$; define $\mathfrak R\in W$ analogously, except that the last condition becomes $w(L+d-R+1)=L+R$. Define
\begin{equation*}
l_{L,R}(d) = \sum_{w \in \mathfrak L}\prod_{i=1}^{L+R-1}\frac{1}{\langle \alpha^\vee_i,w\rho\rangle}, r_{L,R}(d) = \sum_{w \in \mathfrak R}\prod_{i=1}^{L+R-1}\frac{1}{\langle \alpha^\vee_i,w\rho\rangle}.
\end{equation*}
Then \eqref{eq:cc_core} equals $\varepsilon^{-k}(l_{L,R}(d)+r_{L,R}(d))$ with a certain choice of $d,L,R$, with $l_{L,R}(d)$(resp. $r_{L,R}(d)$) counting precisely the contribution from those pairs with $w_2'^{-1}(j+1)$ lying on the first(resp. second) block. Specifically, $d = |w_1'^{-1}(j)-w_1'^{-1}(j+1)|$, $L+R = k+1$, and
\begin{align*}
L=\lceil (k+1)/2\rceil, R=\lfloor (k+1)/2\rfloor &\mbox{ $w_2^{-1}(j)$ lies on the first block}\\
L=\lfloor (k+1)/2\rfloor, R=\lceil (k+1)/2\rceil &\mbox{ $w_2^{-1}(j)$ lies on the second block}.
\end{align*}
To see this, one simply notices that $\tilde\lambda_1+\tilde\lambda_2=\varepsilon\rho$. {\color{red} elaborate}

\begin{lemma}\label{lemma:cc1}
Suppose $R \leq d$. Then
$l_{L,0}(d)=(-1)^{L-1}, r_{0,R}(d)=(-1)^{R-1}$ for $L,R>0$, and $l_{0,R}(d)=r_{L,0}(d)=0$. Also there are the recursive relations
\begin{align*}
l_{L,R}(d) &= -l_{L-1,R}(d)-\frac{1}{d+L-R}r_{L-1,R}(d), \\
r_{L,R}(d) &= \frac{1}{d+L-R}l_{L,R-1}(d)-r_{L,R-1}(d).
\end{align*}
\end{lemma}
\begin{proof}
The first four equalities are straightforward. To show the recursive relation on $l_{L,R}$, we divide into cases in which $w^{-1}(L+R-1) = 2$ or $L+d$. The $w$'s belonging to the former case contribute $1/\langle \alpha^\vee_{L+R-1},w\rho \rangle=-1$ times $l_{L-1,R}(d)$, whereas those in the latter category contribute $1/\langle \alpha^\vee_{L+R-1},w\rho \rangle = -1/(d+L-R)$ times $r_{L-1,R}(d)$, proving the desired statement. The relation on $r_{L,R}$ can be proved by an analogous argument.
\end{proof}

The following immediately implies \eqref{eq:cc_core} and therefore \eqref{eq:cc}.
\begin{lemma}\label{lemma:cc2}
Suppose $L,R\geq 1$ and $R \leq d$. Then
\begin{align*}
l_{L,R}(d) = \frac{(-1)^{L+R-1}}{d+L-R}, r_{L,R}(d) = \frac{(-1)^{L+R}}{d+L-R}.
\end{align*}
\end{lemma}
\begin{proof}
We induct on $\max(L,R)$. The base case $\max(L,R)=1$ can be checked directly. For the inductive step, we use Lemma \ref{lemma:cc1} and the inductive hypothesis to write
\begin{align*}
l_{L,R}(d) = -\frac{d+L-R-1}{d+L-R}l_{L-1,R}(d), r_{L,R}(d) = -\frac{d+L-R+1}{d+L-R}r_{L,R-1}(d).
\end{align*}
From these, one easily computes
\begin{align*}
l_{L,R}(d) &= (-1)^{L-1}\frac{d-R+1}{d+L-R}l_{1,R}(d)=\frac{(-1)^{L+R-1}}{d+L-R},\\
r_{L,R}(d) &= (-1)^{R-1}\frac{d+L-1}{d+L-R}r_{L,1}(d) =\frac{(-1)^{L+R}}{d+L-R},
\end{align*}
as desired.
\end{proof}

\subsection{When different zeros are $\geq 2$ distances apart}

We move on to the case in which the denominator of $I(w_1,w_2)$ has two or more zeros, regarding a set of consecutive zeros as one zero, as established in the previous section. It is easier when all zeros are at least two distances apart, i.e., if $\langle \alpha^\vee_{j_1},w_1\lambda_1+w_2\lambda_2 \rangle$ and $\langle \alpha^\vee_{j_2},w_1\lambda_1+w_2\lambda_2 \rangle$ vanish at (only) $s_2=n$ and nothing in between does, then $j_2-j_1 \geq 3$.

\subsection{When different zeros are $1$ distance apart}
}

\section{An application} \label{sec:app}


\subsection{Settings}

Fix $0 < \eta_2 < 1$, and integers $n \geq 4$ and $2 \leq k \leq n-2$. Let $P=P(k)$ the maximal parabolic subgroup of $G$, and $h:\R_{\geq 0} \rightarrow \R_{\geq 0}$ be a nonnegative function of compact support whose Mellin transform $Mh(s)$ and its inverse transform exist on the strip $\re s \in [n-\eta_2,n+2\eta_2-\eta_2^2]$.

Since our main formula Theorem \ref{thm:res_conc} requires that the input function be smooth, we need to convolve $h$ by a bump function $g$. Define
\begin{equation*}
\sigma(x) = \begin{cases} Ae^{-1/(1-x^2)} & \mbox{if $|x| < 1$}, \\ 0 & \mbox{otherwise,} \end{cases}
\end{equation*}
where $A\in\R$ is defined so that $\int\sigma(x)dx=1$. For a (small) parameter $0 < \delta < 1$, let $B_\delta(x) = \delta^{-1} \cdot \sigma(\delta^{-1}x)$ and $\beta_\delta(x) = B_\delta(\log x)$.

\begin{lemma}\label{lemma:sundry}
Write $s=c+it$. The following statements hold.
\begin{enumerate}[(i)]
\item $|\widehat\sigma(z)| \ll |(\im z)^{-3/4}e^{-\sqrt{\im z}}|$ for any $z \in \C$ with bounded $\re z$.
\item $|M\beta_\delta(s)| \ll |(\delta t)^{-3/4}e^{-\sqrt{\delta t}}|$
\item $M\beta_\delta(c) = 1+O(c\delta)$ for $\delta$ in a neighborhood of zero.
\end{enumerate}
\end{lemma}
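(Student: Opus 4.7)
The plan is to prove (i) via the saddle-point method; (ii) and (iii) will then follow essentially for free.

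For part (i), I would write (in the paper's Fourier convention, which from the context must amount to $\widehat\sigma(z) = \int \sigma(v)\, e^{vz}\,dv$ up to an inessential sign)
\begin{equation*}
\widehat\sigma(z) = A\int_{-1}^{1} \exp\Bigl(-\tfrac{1}{1-x^2} + xz\Bigr)\, dx.
\end{equation*}
The integrand extends holomorphically to the open disk $\{|x|<1\}$ with essential singularities at $x=\pm 1$, so for $|\im z|$ large the integral can be attacked by steepest descent. Setting $F(x;z) = -1/(1-x^2) + xz$, the saddle equation $F'(x;z)=0$ reads $z = 2x/(1-x^2)^2$, whose large-$|z|$ solutions lie near $x=\pm 1$ with $1-|x| \sim |z|^{-1/2}$. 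A short computation at either saddle gives $\re F(x_{\text{crit}}) = -\sqrt{|\im z|} + O(1)$, furnishing the exponential factor $e^{-\sqrt{\im z}}$, together with $|F''(x_{\text{crit}})| \sim |z|^{3/2}$, so that the standard Gaussian prefactor $\sqrt{2\pi/|F''|}$ yields the polynomial factor $|z|^{-3/4}$. To turn this into a rigorous estimate, I would deform the real contour to pass through both saddles along a steepest-descent path inside $\{|x|<1\}$, verify that $\re F \to -\infty$ along the path as one approaches $x=\pm 1$, and control the off-saddle contributions by $|e^{F}|$. Boundedness of $\re z$ guarantees that all lower-order pieces are uniform in $z$.

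For part (ii), substituting first $u = \log x$ and then $v = u/\delta$ in the Mellin integral gives
\begin{equation*}
M\beta_\delta(s) = \int_{-\infty}^{\infty} B_\delta(u)\,e^{us}\,du = \int_{-1}^{1} \sigma(v)\,e^{\delta v s}\,dv = \widehat\sigma(\delta s).
\end{equation*}
Since $\re(\delta s) = \delta c$ stays bounded (any fixed $c$ with $\delta<1$ suffices), applying (i) with $z = \delta s$ delivers the stated bound $|M\beta_\delta(s)| \ll |\delta t|^{-3/4} e^{-\sqrt{\delta t}}$ immediately.

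For part (iii), Taylor expanding the integrand yields
\begin{equation*}
M\beta_\delta(c) = \int_{-1}^{1} \sigma(v)\,dv + \delta c \int_{-1}^{1} v\,\sigma(v)\,dv + O\!\left((\delta c)^2\right);
\end{equation*}
the first integral is $1$ by the normalization of $\sigma$, the second vanishes by evenness of $\sigma$, and the $O((\delta c)^2)$ remainder is absorbed into $O(c\delta)$ for $\delta$ near zero.

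The main obstacle is unquestionably part (i). A crude application of the Gevrey-class derivative bound $\|\sigma^{(n)}\|_1 \ll R^n (n!)^2$ together with $n$-fold integration by parts yields only $|\widehat\sigma(z)| \ll |z|^{1/4} e^{-2\sqrt{|z|/R}}$ upon optimizing in $n$, which is weaker than the claimed bound by a factor of $|z|$. Obtaining the sharp prefactor $|z|^{-3/4}$ therefore really requires the saddle-point analysis and the attendant contour deformation; once this is in place, parts (ii) and (iii) are one-line consequences.
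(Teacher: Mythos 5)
Your proof is correct and takes essentially the same route as the paper. The paper cites Johnson \cite[Section 2]{Joh15} for part (i) --- your saddle-point sketch is precisely what that reference carries out --- performs the identical change of variables $M\beta_\delta(s)=\widehat\sigma(\delta s)$ for (ii), and for (iii) simply bounds $e^{cx}=1+O(c\delta)$ on the domain of integration, where you additionally observe that evenness of $\sigma$ kills the linear term and yields the sharper $O((c\delta)^2)$; both estimates deliver the claimed $O(c\delta)$.
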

\ignore{\color{red} The saddle point stuff is kind of a pain. Though it's possible to get around it --- we know $\hat\sigma(t)$ and $Mg(c+it)$, as a function of $t$ decrease faster than any polynomial, since they are Fourier transforms of bump functions; and that's enough for us actually --- it'd be nice to know what's going on more precisely.

I also may want to change $G(x) = \delta^{-1} \cdot \sigma(\delta^{-1}x)e^{-nx}$. This has the effect of making $Mg(n) = 1$.

Below I end up using $g$ for two things. Change it.
}
\begin{proof}
(i) is well-known, see e.g., \cite[Section 2]{Joh15}; we also added a more elaborate computation in the appendix below. (ii) follows from (i) and 
\begin{align*}
M\beta_\delta(s) &= \int_0^\infty \beta_\delta(x)x^{s-1}dx \\
&= \int_{-\infty}^\infty B_\delta(x)e^{(c+it)x}dx \\
&= \frac{1}{\delta}\int_{-\delta}^\delta \sigma(\delta^{-1}x)e^{i(t-ci)x}dx \\
&= \widehat\sigma(\delta(t-ci)).
\end{align*}
The above calculation also yields that
\begin{equation*}
M\beta_\delta(c) = \frac{1}{\delta}\int_{-\delta}^\delta \sigma(\delta^{-1}x)e^{cx}dx.
\end{equation*}
Since $e^{cx}= 1+O(c\delta)$ in the range of integration, this proves (iii).
\end{proof}

For a subset $A$ of $\R$, define
\begin{equation*}
h_A(x) = \begin{cases} 1 & \mbox{if $x \in A$} \\ 0 & \mbox{otherwise.}\end{cases}
\end{equation*}
For $N \in \R$, we write $h_N=h_{[0,N]}$. It may be helpful to note that $Mh_N(s) = N^s/s$. Let $f_{A,\delta}=h_A*\beta_\delta$, and similarly for $f_{N,\delta}$.

\subsection{Second moment estimate}

For our application Theorem \ref{thm:intro_grass}, we are interested in the second moment
\begin{align*}
\frac{1}{\vol^C(\Gamma\backslash G)}\int_{\Gamma \backslash G} \left|\mathrm H^CE_{f_{A,\delta}}(g)\right|^2dg.
\end{align*}
for suitable choices of intervals $A \subseteq \R$ and $\delta$.
Since $\Lambda^CE_{f_{A,\delta}}(g)$ is real-valued, and $|\mathrm H^CE_{f_{A,\delta}}(g)| \leq |\Lambda^CE_{f_{A,\delta}}(g)|$ for all $g \in \Gamma\backslash G$, this is bounded above by
\begin{align*}
\frac{1}{\vol^C(\Gamma\backslash G)}\int_{\Gamma \backslash G} \left(\Lambda^CE_{f_{A,\delta}}(g)\right)^2dg.
\end{align*}

We apply Theorem \ref{thm:res_conc} to estimate this integral. Clearly, the main term is given by
\begin{align*}
\left(Mf_{A,\delta}(n)\xi(n,k)\right)^2.
\end{align*}

For a more precise error estimate, we revisit the expressions \eqref{eq:res-half} and \eqref{eq:res-remain}. First, consider the integral
\begin{align*}
\int_{\re s_2=n-\eta_2} Mf_2(s_2)\sum_{w_2 \in W}\frac{e^{\langle C, w_2\lambda_2-\rho+O(\varepsilon)\rangle}}{\prod_{\alpha \in \Delta}\langle\alpha^\vee,w_2\lambda_2-\rho+O(\varepsilon)\rangle}M(w_2,\lambda_2)ds_2
\end{align*}
in \eqref{eq:res-half}. By Lemma \ref{lemma:misc_growth}, $|M(w_2,\lambda_2)| \ll_n |\im s_2|^{0.501\eta_2}$. On the other hand, none of the denominator terms $\langle\alpha^\vee,w_2\lambda_2-\rho\rangle$ vanish on the line $\re s_2=n-\eta_2$; moreover, at least one of them is linear in $\im s_2$. Therefore, for $f_2 = f_{A,\delta}$, this integral is bounded by a constant in $n,\eta_1,\eta_2$ times
\begin{align} \label{eq:prec_err_est1}
T^{-\eta_2 \frac{k_2(n-k_2)}{2}}\int_{\re s_2=n-\eta_2} \left|Mh_A(s_2)M\beta_\delta(s_2) (1+\im s_2)^{-0.499\eta_2}\right|ds_2.
\end{align}

Let us also consider the integral \eqref{eq:res-remain}. Again, we can use Lemma \ref{lemma:misc_growth} to bound the growths of $M(w_1,\lambda_1)$ and $M(w_2,\lambda_2)$. The denominator terms again does not vanish in the region of integration. Hence, for $f_1=f_2=f_{A,\delta}$, \eqref{eq:res-remain} is bounded by a constant in $n,\eta_1,\eta_2$ times
\begin{align} \label{eq:prec_err_est2}
T^{\kappa}\prod_{i=1,2}\int_{\re s_i=n-\eta_i} \left|Mh_A(s_i)M\beta_\delta(s_i) (\im s_i)^{0.501\eta_i}\right|ds_i.
\end{align}

At this point, we need the following lemma.

\begin{lemma} \label{lemma:mellin}
Suppose $s = c + it$, $c \geq 1$, and $A = (N_1,N_2]$, so that either $N_1=0$ or $N_2^c - N_1^c \leq N_1^c/2$. Then $|Mh_A(s)| \ll |Mh_A(c)|$ for any $t \in \R$.
\end{lemma}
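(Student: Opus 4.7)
The plan is to compute $Mh_A$ in closed form and then apply the triangle inequality inside the integral representation. Direct evaluation of the Mellin transform gives
\[
Mh_A(s) \;=\; \int_{N_1}^{N_2} x^{s-1}\,dx \;=\; \frac{N_2^s - N_1^s}{s},
\]
with the convention $N_1^s = 0$ when $N_1 = 0$, which is legitimate since $c \geq 1 > 0$.

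The key estimate will come from retaining the integral form and bounding the integrand pointwise by its modulus $|x^{s-1}| = x^{c-1}$, valid for $x > 0$. This immediately gives, uniformly in $t \in \R$,
\[
|Mh_A(s)| \;=\; \left|\int_{N_1}^{N_2} x^{s-1}\,dx\right| \;\leq\; \int_{N_1}^{N_2} x^{c-1}\,dx \;=\; \frac{N_2^c - N_1^c}{c} \;=\; Mh_A(c).
\]
So the claimed bound holds with implicit constant $1$, and both the $N_1 = 0$ and $N_1 > 0$ cases are handled in a single stroke. There is no genuine obstacle --- the statement is essentially a one-line consequence of the triangle inequality.

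Notably, this argument does not invoke the hypothesis $N_2^c - N_1^c \leq N_1^c/2$ at all. I expect that condition is recorded for compatibility with the lemma's intended use in Section 7, where $Mh_A(c)$ evidently serves as a sharp proxy on thin dyadic pieces of a partitioned interval, and one wants to know that the position-to-width ratio stays controlled. The deeper question --- whether oscillation of $x^{it}$ can be exploited to produce genuine decay in $|t|$, rather than just a uniform bound --- would require a stationary phase or integration-by-parts analysis and lies outside the scope of the stated inequality.
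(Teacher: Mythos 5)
Your proof is correct, and it is considerably simpler than the paper's. The paper establishes the same bound by a hands-on decomposition: it writes
\[
N_2^s - N_1^s = (N_2^c - N_1^c)e^{it\log N_2} + N_1^c e^{it\log N_2}\bigl(1 - e^{it\log(N_1/N_2)}\bigr),
\]
handles the first piece trivially via $|s|\geq c$, and then bounds the oscillatory second piece by splitting on whether $|t\log(N_2/N_1)|$ is small or large, finally invoking the hypothesis $N_2^c - N_1^c \leq N_1^c/2$ to convert the resulting $N_1^c\log(N_2/N_1)$ into $O\bigl((N_2^c - N_1^c)/c\bigr)$. Your route is to stay with the integral representation $Mh_A(s) = \int_{N_1}^{N_2} x^{s-1}\,dx$, note $|x^{s-1}| = x^{c-1}$, and apply the triangle inequality, which gives $|Mh_A(s)| \leq Mh_A(c)$ directly with constant $1$. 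This is strictly stronger and shorter, and --- as you correctly observe --- makes no use of the hypothesis on the length of the interval, which turns out to be a crutch for the paper's particular decomposition rather than an intrinsic feature of the inequality. Since $Mh_A(c) > 0$, the absolute value on the right side of the lemma is cosmetic in both approaches. Your argument could profitably replace the paper's proof; the hypothesis would then only need to be retained in Proposition \ref{prop:prec_err_est} if it is used elsewhere in that proposition's setup.
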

\begin{proof}
First note that
\begin{align*}
Mh_A(s) = \frac{N_2^s}{s}-\frac{N_1^s}{s}.
\end{align*}
Thus the case $N_1=0$ is trivial, and we assume $N_1 > 0$ for the rest of the proof.
Rewriting this as
\begin{align*}
&\frac{1}{s}\left(N_2^ce^{it\log N_2} - N_1^ce^{it\log N_1}\right) \\
&= \frac{1}{s}\left((N_2^c-N_1^c)e^{it\log N_2} + N_1^ce^{it\log N_2}(1-e^{it\log\frac{N_1}{N_2}})\right),
\end{align*}
it suffices to show that
\begin{align*}
\left|\frac{1}{c+it}N_1^c(1-e^{it\log\frac{N_1}{N_2}})\right| \ll \frac{1}{c}(N_2^c-N_1^c)
\end{align*}
for any $t \in \R$.

When $|t\log(N_2/N_1)| \leq 1/2$, the left-hand side of the above inequality is bounded by a constant times
\begin{align*}
\left|\frac{t}{c+it}N_1^c\log\frac{N_2}{N_1}\right| \ll N_1^c\log\frac{N_2}{N_1}.
\end{align*}
If $|t\log(N_2/N_1)| > 1/2$, then it is bounded by
\begin{align*}
\left|\frac{2N_1^c}{c+it}\right| \ll \frac{N_1^c}{t} \ll N_1^c\log\frac{N_2}{N_1}.
\end{align*}

On the other hand, the assumption implies that
\begin{align*}
c\log \frac{N_2}{N_1} = \log\left(1+\frac{N_2^c-N_1^c}{N_1^c}\right) \ll \frac{N_2^c-N_1^c}{N_1^c},
\end{align*}
which completes the proof.
\end{proof}

Thanks to Lemma \ref{lemma:mellin} above, the integrals in \eqref{eq:prec_err_est1} and \eqref{eq:prec_err_est2} are bounded by

\begin{align*}
Mh_A(n-\eta_2)\int_{\re s_2=n-\eta_2} \left|M\beta_\delta(s_2) (1+\im s_2)^{-0.499\eta_2}\right|ds_2
\end{align*}
and
\begin{align*}
Mh_A(n-\eta_i)\int_{\re s_i=n-\eta_i} \left|M\beta_\delta(s_i) (\im s_i)^{0.501\eta_i}\right|ds_i,
\end{align*}
respectively. Plugging in Lemma \ref{lemma:sundry}(ii) and performing the change of variables $\delta \cdot \im s_i = u$, it turns out that they are of order $O(\delta^{-1}Mh_A(n-\eta_2))$ and $O(\delta^{-1-0.501\eta_i}Mh_A(n-\eta_i))$, respectively. This completes the error estimate. We summarize our discussion in the form of a proposition.

\begin{proposition} \label{prop:prec_err_est}
Fix $T > 0$. Let $A=(N_1,N_2]$, so that either $N_1=0$ or $N_2^c-N_1^c \leq N_1^c/2$ for $c=n-\eta_1$ and $n-\eta_2$. Assume in addition that
\begin{align*}
Mh_A(n)T^{-\eta_2\frac{k_2(n-k_2)}{2}} > Mh_A(n-\eta_1)T^\kappa.
\end{align*}
This is satisfied, for instance, if $Mh_A(n-\eta_1)$ is sufficiently large relative to $T$.
Then
\begin{align*}
&\frac{1}{\vol^C(\Gamma\backslash G)}\int_{\Gamma \backslash G} \left|\mathrm H^CE_{f_{A,\delta}}(g)\right|^2dg \\
&= \left(Mf_{A,\delta}(n)\xi(n,k)\right)^2 + O(\delta^{-\alpha}Mh_A(n)Mh_A(n-\eta_2)),
\end{align*}
where the implicit constant depends on $n, \eta_1, \eta_2,$ and $T$, and $\alpha = 2+0.501(\eta_1+\eta_2)$.
\end{proposition}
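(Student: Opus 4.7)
The strategy is to upper-bound the harsh second moment by the Arthur-truncated one, invoke Theorem \ref{thm:res_conc} with $f_1 = f_2 = f_{A,\delta}$ and $k_1 = k_2 = k$, and then carefully track the $\delta$-dependence in the error terms \eqref{eq:res-half} and \eqref{eq:res-remain} using the three auxiliary lemmas (\ref{lemma:misc_growth}, \ref{lemma:sundry}, \ref{lemma:mellin}). First, since $\mathrm H^C E_{f_{A,\delta}}$ vanishes off $\mathrm{supp}\,\chi^C_{G,G}$ and equals $\Lambda^C E_{f_{A,\delta}}$ on it, we have $|\mathrm H^C E_{f_{A,\delta}}|^2 \leq (\Lambda^C E_{f_{A,\delta}})^2$ pointwise. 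Applying Theorem \ref{thm:res_conc} yields the main term $(Mf_{A,\delta}(n)\xi(n,k))^2 \vol^C(\Gamma\backslash G)$, which after division by $\vol^C(\Gamma\backslash G)$ gives the displayed principal part, plus the error integrals \eqref{eq:res-half} and \eqref{eq:res-remain} in the $\varepsilon \to 0$ limit.

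The key input for the error bound is the factorization $Mf_{A,\delta}(s) = Mh_A(s)\, M\beta_\delta(s)$ coming from multiplicative Mellin convolution. On the contour $\re s_i = n - \eta_i$, I will use Lemma \ref{lemma:misc_growth} to bound $|M(w_i,\lambda_i)| \ll |\im s_i|^{0.501\eta_i}$, and observe that no denominator $\langle \alpha^\vee, w_1\tilde\lambda_1 + w_2\tilde\lambda_2\rangle$ vanishes on these lines while at least one grows linearly in $|\im s_i|$. This produces the integral bounds \eqref{eq:prec_err_est1} and \eqref{eq:prec_err_est2} displayed in the text. I will then invoke Lemma \ref{lemma:mellin}, whose hypothesis is ensured by the assumption on $A = (N_1, N_2]$, to pull $|Mh_A(\re s_i)|$ out of each integral as a constant, reducing the task to evaluating $\int_\R |M\beta_\delta(c+it)| (1+|t|)^{\pm 0.501\eta_i}\, dt$.

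By Lemma \ref{lemma:sundry}(ii), the integrand decays stretched-exponentially in $|\delta t|$, so the substitution $u = \delta t$ renders the remaining integral $\delta$-independent and convergent, up to a Jacobian factor of order $\delta^{-1 - O(\eta_i)}$. Tallying everything (and using Lemma \ref{lemma:sundry}(iii) to note $Mf_{A,\delta}(n) \sim Mh_A(n)$), the expression \eqref{eq:res-half} contributes $O_T(\delta^{-1}\, Mh_A(n) Mh_A(n-\eta_2))$, while \eqref{eq:res-remain} contributes $O_T(\delta^{-2 - 0.501(\eta_1 + \eta_2)}\, Mh_A(n-\eta_1) Mh_A(n-\eta_2))$.

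Finally, the stated domination hypothesis $Mh_A(n) T^{-\eta_2 k(n-k)/2} > Mh_A(n-\eta_1) T^\kappa$ lets me bound the contribution from \eqref{eq:res-remain} by that from \eqref{eq:res-half} multiplied by $\delta^{-1-0.501(\eta_1+\eta_2)}$ (with the fixed $T$-factors absorbed into the implicit constant), so both error pieces collapse into a single bound $O_T(\delta^{-\alpha}\, Mh_A(n) Mh_A(n-\eta_2))$ with $\alpha = 2 + 0.501(\eta_1 + \eta_2)$. The subtlest step in a fully written version of this plan would be justifying the interchange of the $\varepsilon \to 0$ limit with the contour integrals; this follows from dominated convergence once one uses the singularity cancellations of Section \ref{sec:poles} to produce an $\varepsilon$-uniform envelope for the integrand on each contour.
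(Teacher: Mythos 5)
Your proposal is correct and follows the paper's proof essentially step for step: the pointwise bound of the harsh truncation by the Arthur truncation, the invocation of Theorem \ref{thm:res_conc}, the Mellin factorization $Mf_{A,\delta}=Mh_A\cdot M\beta_\delta$, the contour bounds via Lemmas \ref{lemma:misc_growth}, \ref{lemma:mellin}, and \ref{lemma:sundry}, and the final fusion of the two error terms under the stated domination hypothesis. Your closing remark on the $\varepsilon$-uniform dominated-convergence envelope is a correct (and helpful) elaboration of a point the paper leaves implicit.
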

\begin{remark}
By the remark under Lemma \ref{lemma:misc_growth}, and setting $\eta_1\approx0$ and $\eta_2\approx1$, $\alpha$ can in fact be made arbitrarily close to (but greater than) $2.5$.
\end{remark}
\begin{proof}
The only additional work done here is the fusion of the two error terms \eqref{eq:prec_err_est1} and \eqref{eq:prec_err_est2}: recall that they are
\begin{align*}
O\left(\delta^{-1}Mh_A(n)Mh_A(n-\eta_2)T^{-\eta_2\frac{k_2(n-k_2)}{2}}\right)
\end{align*}
and
\begin{align*}
O\left(\delta^{-\alpha}Mh_A(n-\eta_1)Mh_A(n-\eta_2)T^\kappa\right),
\end{align*}
respectively.
\end{proof}

\subsection{Adaptation of Schmidt's argument}

In this section, we adapt the famous Borel-Cantelli argument of Schmidt \cite{Sch60} to the problem of counting sublattices. We closely follow especially pages 520-521 of \cite{Sch60}. We will first derive a result for the smoothed function $f_{A,\delta}$ first (Theorem \ref{thm:grass_smooth} below), and then finesse a result for $h_A$ from it (Theorem \ref{thm:grass}).

From now on, take $\eta_1=1-\eta_2$, and write $\eta = \eta_2$. We intend $\eta$ to be close to $1$ from the left. Fix $T \gg 0$ and $H_0 \gg_T 0$. Choose a sequence $\{p_j\}_{j=0}^\infty$ so that $p_0=0$, $p_1= (2H_0)^{2n-\eta}$, and $p_{j+1}^{2n-\eta} - p_j^{2n-\eta} = H_0^{2n-\eta}$. For integers $k>j\geq0$, write
\begin{align*}
V_j^k = \mathrm{Var}(\Lambda^CE_{f_{(p_j,p_k],\delta}}).
\end{align*}
Note that $p_k^a-p_j^a \leq p_j^a/2$ for all $a < 2n-\eta$, so that the condition of Proposition \ref{prop:prec_err_est} is satisfied.

We claim $V_j^k \leq \delta^{-\alpha}H_1(k-j)$ for some constant $H_1=H_1(n,\eta,T)$, where $\alpha$ is as in the statement of Proposition \ref{prop:prec_err_est}. By Proposition \ref{prop:prec_err_est},
\begin{align*}
\delta^{\alpha}V_j^k &\ll Mh_{(p_j,p_k]}(n)Mh_{(p_j,p_k]}(n-\eta) \\
&\ll (p_k^n-p_j^n)(p_k^{n-\eta}-p_j^{n-\eta}) \\
&< p_k^{2n-\eta}-p_j^{2n-\eta} \leq 2H_0^{2n-\eta}(k-j),
\end{align*}
where the implicit constants depend on $n, \eta$, and $T$.
This verifies the claim.

Following Schmidt \cite{Sch60}, define for $S \in \Z_{> 0}$
\begin{align*}
K_S = \{(j,k):0 \leq j<k\leq 2^S, j=u2^s,k-j=2^s \mbox{ for $u,s\in\Z_{\geq 0}$}\}.
\end{align*}
\begin{lemma}\label{lemma:sch1}
In the above context, it holds that
\begin{align*}
\sum_{(j,k)\in K_S} V_j^k \leq \delta^{-\alpha}H_1(S+1)2^S.
\end{align*}
\end{lemma}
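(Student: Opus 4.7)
The plan is to compute the sum directly by stratifying $K_S$ according to the parameter $s$, using the already-established bound $V_j^k \leq \delta^{-\alpha}H_1(k-j)$.

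First I would fix $s \in \{0, 1, \ldots, S\}$ and count the pairs $(j,k) \in K_S$ with $k-j = 2^s$. Such a pair is determined by $u \in \Z_{\geq 0}$ with $j = u2^s$ and $k = (u+1)2^s$; the constraint $k \leq 2^S$ becomes $u \leq 2^{S-s}-1$, giving exactly $2^{S-s}$ admissible pairs for each $s$. Next, for each such pair, the variance bound established immediately before the lemma gives $V_j^k \leq \delta^{-\alpha}H_1 \cdot 2^s$.

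Combining these two observations yields
\begin{align*}
\sum_{(j,k)\in K_S} V_j^k &= \sum_{s=0}^{S} \sum_{\substack{(j,k)\in K_S \\ k-j = 2^s}} V_j^k \\
&\leq \sum_{s=0}^{S} 2^{S-s} \cdot \delta^{-\alpha}H_1 \cdot 2^s \\
&= \delta^{-\alpha}H_1 \sum_{s=0}^{S} 2^S \\
&= \delta^{-\alpha}H_1(S+1)2^S,
\end{align*}
which is the desired estimate.

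There is no real obstacle here: this is a bookkeeping step in the Borel--Cantelli chain, and all the analytic content has already been absorbed into Proposition \ref{prop:prec_err_est} and the verification that $V_j^k \leq \delta^{-\alpha}H_1(k-j)$. The only thing to double-check is that the hypothesis of Proposition \ref{prop:prec_err_est} (namely $p_k^a - p_j^a \leq p_j^a/2$ for $a = n-\eta_1, n-\eta_2$, and the lower bound on $Mh_A(n-\eta_1)$) is satisfied uniformly over $(j,k) \in K_S$; this was already noted just before the lemma, using $a < 2n-\eta$ and $H_0 \gg_T 0$.
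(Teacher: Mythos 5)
Your proof is correct and follows exactly the same stratification-by-$s$ argument as the paper, including the count of $2^{S-s}$ pairs at each level $s$ and the bound $V_j^k \leq \delta^{-\alpha}H_1 \cdot 2^s$. The added remark verifying the hypotheses of Proposition \ref{prop:prec_err_est} uniformly over $K_S$ is a sensible sanity check but does not change the substance; the argument matches the paper's proof.
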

\begin{proof}
For each $s = 0,\ldots,S$, their are precisely $2^{S-s}$ elements $(j,k)\in K_S$ such that $k-j=2^s$. These elements contribute at most $H_12^{S-s}\cdot2^s=H_12^S$ to the sum. Since there are $(S+1)$ possible choices for $s$, this completes the proof.
\end{proof}
\begin{lemma}\label{lemma:sch2}
Let $\psi:\R_{\geq 0} \rightarrow \R_{\geq 0}$ be a nondecreasing function such that
\begin{align*}
\int_0^\infty \frac{dx}{\psi(x)} < \infty.
\end{align*}
Then for almost every $g \in \Gamma \backslash G$, there exists a constant $H_2=H_2(n,\eta,g)>0$ such that 
\begin{align*}
\left(E_{f_{p_N,\delta}}(g) - Mf_{p_N,\delta}(n)\xi(n,k)\right)^2 < \delta^{-\alpha}H_2 N(\log N)^2\psi(\log N)
\end{align*}
for all integers $N \gg_g 0$.
\end{lemma}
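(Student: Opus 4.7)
The plan is to execute Schmidt's classical Borel--Cantelli argument in the form adapted to the dyadic cover provided by $K_S$. Introduce the shorthand
\[
X_{j,k}(g):=\Lambda^C E_{f_{(p_j,p_k],\delta}}(g)-Mf_{(p_j,p_k],\delta}(n)\xi(n,k),\qquad X_N(g):=X_{0,N}(g);
\]
additivity of $f_{(p_j,p_k],\delta}$ in the interval and linearity of $\Lambda^C$ make $X_{j,k}$ additive over the subdivision $(p_j,p_l]\cup(p_l,p_k]$. For $N\leq 2^S$ with binary expansion $N=2^{s_1}+\cdots+2^{s_r}$, $s_1>\cdots>s_r\geq 0$ (hence $r\leq S+1$), the partial sums $N_i=2^{s_1}+\cdots+2^{s_i}$ satisfy $(N_{i-1},N_i)\in K_S$, and Cauchy--Schwarz gives
\[
|X_N(g)|^2 \leq (S+1)\sum_{i=1}^{r}|X_{N_{i-1},N_i}(g)|^2 \leq (S+1)\sum_{(j,k)\in K_S}|X_{j,k}(g)|^2.
\]
Hence the maximal function $M_S(g):=\max_{1\leq N\leq 2^S}|X_N(g)|^2$ satisfies, by Lemma \ref{lemma:sch1},
\[
\int_{\Gamma\backslash G} M_S(g)\,dg \leq (S+1)\sum_{(j,k)\in K_S}V_j^k \leq \delta^{-\alpha}H_1(S+1)^2\,2^S.
\]

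Next I would apply Chebyshev's inequality with threshold $\Lambda_S:=\delta^{-\alpha}H_1(S+1)^2\,2^S\,\psi((S-1)\log 2)$, obtaining
\[
\vol\{g\in\Gamma\backslash G : M_S(g)>\Lambda_S\} \leq \frac{1}{\psi((S-1)\log 2)}.
\]
Since $\psi$ is nondecreasing, the hypothesis $\int_0^\infty dx/\psi(x)<\infty$ forces $\sum_{S\geq 2}1/\psi((S-1)\log 2)$ to converge (via integral comparison), and Borel--Cantelli then yields, for almost every $g\in\Gamma\backslash G$, an index $S_0(g)$ with $M_S(g)\leq\Lambda_S$ whenever $S\geq S_0(g)$. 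For any integer $N\geq 2^{S_0(g)}$, taking the unique $S$ with $2^{S-1}<N\leq 2^S$, we have $(S+1)^2\ll(\log N)^2$, $2^S\leq 2N$, and $\psi((S-1)\log 2)\leq\psi(\log N)$ by monotonicity, yielding
\[
|X_N(g)|^2 \leq M_S(g)\leq\Lambda_S \ll \delta^{-\alpha}\,N(\log N)^2\psi(\log N),
\]
with the implied constants absorbed into $H_2=H_2(n,\eta,g)$.

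Finally, to pass from $\Lambda^C E_{f_{p_N,\delta}}$ to $E_{f_{p_N,\delta}}$ itself, Theorem \ref{thm:truncate} gives $|\Lambda^C E_{f_{p_N,\delta}}(g)-\mathrm{H}^C E_{f_{p_N,\delta}}(g)|\ll_p T^{-p}$ for every $p>0$, and the harsh truncation agrees with $E_{f_{p_N,\delta}}$ on $\mathrm{supp}\,\chi_{G,G}^C$. Proposition \ref{prop:tr_fund_vol} shows that the complement has measure $O(T^{-n(n-1)/2})$, so running the previous argument along a sequence $T_m\to\infty$ and taking the union of the resulting null sets produces a full-measure subset of $\Gamma\backslash G$ on which the claimed bound for $E_{f_{p_N,\delta}}$ holds, the $O(T_m^{-p})$ discrepancy being absorbed into $H_2$ for fixed $g$. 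The principal technical obstacle lies in the first paragraph: the dyadic/Cauchy--Schwarz bookkeeping is precisely what allows a variance estimate over pairs in $K_S$ to control the maximal discrepancy throughout $1\leq N\leq 2^S$; the rest is a standard Borel--Cantelli application together with Theorem \ref{thm:truncate}.
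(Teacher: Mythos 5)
Your proposal is correct and follows essentially the same route as the paper's proof: a dyadic decomposition via $K_S$, Cauchy--Schwarz, Markov/Chebyshev with a threshold containing $\psi((S-1)\log 2)$, Borel--Cantelli, and a passage from $\Lambda^C E$ to $E$ by restricting to the truncated domain and then letting $T\to\infty$. The only cosmetic difference is that you apply Cauchy--Schwarz before integrating (so Markov is applied to the maximal function $M_S$ directly), whereas the paper applies Markov to $\sum_{(j,k)\in K_S}|X_{j,k}(g)|^2$ first and invokes Cauchy--Schwarz afterward for each $N$; the two orderings give the same $\ll S^2 2^S\psi$ bound, and the final step through $\mathrm{H}^C$ is an unnecessary but harmless detour since $\Lambda^C E=E$ already holds on $\mathrm{supp}\,\chi^C_{G,G}$.
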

\begin{proof}
It suffices to prove this for almost every $g$ in the truncated domain $\mathrm{supp}\,\chi^C_{G,G}$ for all sufficiently large $T$.

Write $\tilde\psi(S) = \psi((S-1)\log 2)$. By Lemma \ref{lemma:sch1} above and Markov's inequality, the set of $g \in \mathrm{supp}\,\chi^C_{G,G}$ such that
\begin{align*}
\sum_{(j,k)\in K_S}\left(E_{f_{(p_k-p_j],\delta}}(g) - Mf_{(p_k-p_j],\delta}(n)\xi(n,k)\right)^2 > \delta^{-\alpha}H_1(S+1)2^S\tilde\psi(S)
\end{align*}
has measure $O(1/\tilde\psi(S))$. Since $\sum_{S \gg 0} 1/\tilde\psi(S)$ converges by assumption, the Borel-Cantelli lemma implies that almost every $g \in \mathrm{supp}\,\chi^C_{G,G}$ satisfies
\begin{align*}
\sum_{(j,k)\in K_S}\left(E_{f_{(p_k-p_j],\delta}}(g) - Mf_{(p_k-p_j],\delta}(n)\xi(n,k)\right)^2 \leq \delta^{-\alpha}H_1(S+1)2^S\tilde\psi(S)
\end{align*}
for all $S \gg_g 0$. On the other hand, for any $2^{S-1} < N \leq 2^S$,
\begin{align*}
E_{f_{p_N,\delta}}(g) - Mf_{p_N,\delta}(n)\xi(n,k)
\end{align*}
is the sum of at most $S$ elements of the form
\begin{align*}
E_{f_{(p_k-p_j],\delta}}(g) - Mf_{(p_k-p_j],\delta}(n)\xi(n,k)
\end{align*}
for $(j,k)\in K_S$. Therefore, by the Cauchy-Schwarz inequality,
\begin{align*}
\left(E_{f_{p_N,\delta}}(g) - Mf_{p_N,\delta}(n)\xi(n,k)\right)^2 &\leq \delta^{-\alpha}H_1S(S+1)2^S\tilde\psi(S) \\
&\leq \delta^{-\alpha}H_2N(\log N)^2\psi(\log N)
\end{align*}
for some constant $H_2 > 0$. $H_2$ depends on $n,\eta,$ and $T$; but the dependence on $T$ can be replaced by the dependence on $g$.
\end{proof}

By taking $\epsilon_0 >0$ such that $\eta>1-2\epsilon_0$, the above two lemmas readily imply the following result.
\begin{theorem} \label{thm:grass_smooth}
Choose any $\epsilon_0 > 0$. Then for almost every $g \in \Gamma\backslash G$ and $p \gg_{g} 0$,
\begin{align*}
E_{f_{p,\delta}}(g) - Mf_{p,\delta}(n)\xi(n,k) = O_{n,\epsilon_0,g}\left(\delta^{-\alpha}p^{n-\frac{1}{2}+\epsilon_0}\right).
\end{align*}
\end{theorem}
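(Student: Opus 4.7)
The plan is to combine Lemma \ref{lemma:sch2} with a monotonicity-based interpolation to pass from the discrete sequence $\{p_N\}$ to arbitrary $p$. First I would apply Lemma \ref{lemma:sch2} with $\psi(x) = (1+x)^{1+\epsilon}$ for some small $\epsilon = \epsilon(\epsilon_0) > 0$; this clearly satisfies $\int_0^\infty dx/\psi(x) < \infty$, so the conclusion of the lemma yields, for almost every $g \in \Gamma \backslash G$ and all $N \gg_g 0$,
\begin{align*}
\bigl|E_{f_{p_N,\delta}}(g) - Mf_{p_N,\delta}(n)\xi(n,k)\bigr| \ll_{n,\eta,g} \delta^{-\alpha/2}\, N^{1/2}(\log N)^{(3+\epsilon)/2}.
\end{align*}
Since $p_N^{2n-\eta} = N H_0^{2n-\eta}$ by the construction of $\{p_j\}$, we have $N^{1/2} \asymp p_N^{n-\eta/2}$; using the hypothesis $\eta > 1-2\epsilon_0$ from the preamble, the exponent $n-\eta/2$ is strictly less than $n-1/2+\epsilon_0$. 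Absorbing the $(\log N)^{(3+\epsilon)/2}$ factor into a slight enlargement of $\epsilon_0$, and using $\delta^{-\alpha/2} \leq \delta^{-\alpha}$ for $\delta<1$, this gives the desired bound at the discrete points $p = p_N$.

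To upgrade the estimate to arbitrary $p$, I would exploit the monotonicity of $p \mapsto E_{f_{p,\delta}}(g)$, which follows from $h_p \leq h_q$ pointwise for $p\leq q$ together with nonnegativity of $\beta_\delta$. For $p_{N-1} < p \leq p_N$ this gives
\begin{align*}
E_{f_{p_{N-1},\delta}}(g) \leq E_{f_{p,\delta}}(g) \leq E_{f_{p_N,\delta}}(g),
\end{align*}
so that
\begin{align*}
E_{f_{p,\delta}}(g) - Mf_{p,\delta}(n)\xi(n,k)
&\leq \bigl(E_{f_{p_N,\delta}}(g) - Mf_{p_N,\delta}(n)\xi(n,k)\bigr) \\
&\quad + \bigl(Mf_{p_N,\delta}(n) - Mf_{p,\delta}(n)\bigr)\xi(n,k),
\end{align*}
with the analogous lower bound via $p_{N-1}$. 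The first summand is handled by the previous step. For the second, I would use $Mf_{p,\delta}(n) = M\beta_\delta(n) \cdot p^n/n$ together with the mean-value estimate $p_N - p_{N-1} \ll H_0^{2n-\eta}/p_N^{2n-\eta-1}$ extracted from $p_N^{2n-\eta}-p_{N-1}^{2n-\eta} = H_0^{2n-\eta}$, giving
\begin{align*}
\bigl|Mf_{p_N,\delta}(n) - Mf_{p,\delta}(n)\bigr| \ll p_N^n - p_{N-1}^n \ll H_0^{2n-\eta}/p_N^{n-\eta},
\end{align*}
which is dwarfed by $p_N^{n-1/2+\epsilon_0}$ for $p_N$ large (recall $H_0$ depends only on $T$). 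Combining yields the claim.

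I do not anticipate a genuine obstacle here: the heavy lifting is already packaged into Proposition \ref{prop:prec_err_est} (the variance bound, whose hypothesis $N_2^c - N_1^c \leq N_1^c/2$ for $c \in \{n-\eta_1,n-\eta_2\}$ is ensured by the fact that the exponent $2n-\eta$ defining the spacing strictly exceeds both, so the increments in the weaker exponents are comparatively small once $j \geq 1$), Lemma \ref{lemma:sch1} (the dyadic variance sum), and Lemma \ref{lemma:sch2} (Borel--Cantelli). The only delicate point is bookkeeping around the constant $H_0$ and verifying that the condition ``$Mh_{(p_j,p_k]}(n-\eta_1)$ sufficiently large relative to $T$'' of Proposition \ref{prop:prec_err_est} can be arranged uniformly in $j,k$ by taking $H_0 \gg_T 0$, which follows from the ratio $Mh_{(p_j,p_k]}(n)/Mh_{(p_j,p_k]}(n-\eta_1) \asymp p_j^{\eta_1}$.
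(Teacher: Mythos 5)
Your proposal reproduces the paper's own argument: Lemma \ref{lemma:sch2} (with a suitable $\psi$ giving a convergent $\int dx/\psi$) yields the bound at the discrete points $p_N$, monotonicity of $p\mapsto E_{f_{p,\delta}}(g)$ interpolates to general $p$, and the remaining step is to check that $Mf_{p_{N},\delta}(n)-Mf_{p_{N-1},\delta}(n)$ is negligible, which follows from the fact that consecutive increments $p_{N}^{2n-\eta}-p_{N-1}^{2n-\eta}$ are constant. Your version is slightly more explicit than the paper's (spelling out the conversion $N^{1/2}\asymp p_N^{n-\eta/2}$, the role of $\eta>1-2\epsilon_0$, and the choice of $\psi$), but it is the same proof.
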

\begin{remark}
Notice that the error term here is much better than what is suggested by the previous works on the topic \cite{Sch68, Thu93, Kim23}. In fact, if the bound on $\kappa$ can be refined to be slightly better than $\kappa < k(n-k)/2$, it is possible to improve it even further: it allows one to make different choices of $T$ and $\eta_1$ so that \eqref{eq:res-remain} becomes smaller than \eqref{eq:res-half}.
\end{remark}

\begin{proof}
We continue from the proof of Lemma \ref{lemma:sch2}. Take a number $p$ between $p_N$ and $p_{N+1}$. Then
\begin{align*}
E_{f_{p_N}}(g) - Mf_{p_{N+1}}(n)\xi(n,k) &\leq E_{f_{p}}(g) - Mf_{p}(n)\xi(n,k) \\ &\leq E_{f_{p_{N+1}}}(g) - Mf_{p_N}(n)\xi(n,k).
\end{align*}
Thus, in the light of Lemma \ref{lemma:sch2}, it suffices to show that $Mf_{p_N}(n) = p_N^n/n$ and $Mf_{p_{N+1}}(n)= p_{N+1}^n/n$ differ by at most $O_n(p^{n-\eta/2})$ for $N$ sufficiently large. In fact, they differ by a much smaller amount, since by assumption $p_{N+1}^{2n-\eta}-p_N^{2n-\eta}$ is a constant.
\end{proof}

The freedom over the choice of $\delta$ allows us to do without smoothing, with a slightly worse error term.

\begin{theorem} \label{thm:grass}
Choose any $\epsilon>0$. Then for almost every $g \in \Gamma\backslash G$ and $p \gg_{g} 0$,
\begin{align*}
E_{h_p}(g) - Mh_p(n)\xi(n,k) = O_{n,\epsilon,g}(p^{n-\frac{1}{7}+\epsilon}).
\end{align*}
\end{theorem}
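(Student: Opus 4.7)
The plan is a desmoothing argument that extracts Theorem \ref{thm:grass} from the already smoothed Theorem \ref{thm:grass_smooth}. Observe that $\beta_\delta$ is a nonnegative density on $(0,\infty)$ with respect to the multiplicative Haar measure $dt/t$ of total mass $1$ and support in $(e^{-\delta},e^\delta)$; hence the multiplicative convolution $f_{p,\delta}=h_p*\beta_\delta$ equals $1$ on $[0,pe^{-\delta}]$, vanishes on $(pe^{\delta},\infty)$, and lies in $[0,1]$ on the transition region. This yields the pointwise sandwich
\begin{equation*}
f_{pe^{-\delta},\delta}(x)\;\le\;h_p(x)\;\le\;f_{pe^{\delta},\delta}(x)\qquad\text{for all }x\ge 0,
\end{equation*}
and summing over primitive rank-$k$ sublattices gives the analogous sandwich for $E_{h_p}(g)$.

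Next I would apply Theorem \ref{thm:grass_smooth} to both enveloping smoothed functions. Because the Mellin transform is multiplicative under convolution,
\begin{equation*}
Mf_{pe^{\pm\delta},\delta}(n)=\frac{(pe^{\pm\delta})^n}{n}\cdot M\beta_\delta(n)=Mh_p(n)+O_n(\delta p^n),
\end{equation*}
using Lemma \ref{lemma:sundry}(iii) together with the Taylor expansion of $e^{\pm n\delta}$ for $\delta$ small. Absorbing the harmless factor $(pe^{\pm\delta})^{n-1/2+\epsilon_0}=(1+O(\delta))\,p^{n-1/2+\epsilon_0}$ into the implicit constant, Theorem \ref{thm:grass_smooth} therefore gives
\begin{equation*}
E_{h_p}(g)-Mh_p(n)\xi(n,k)=O_{n,\epsilon_0,g}\!\left(\delta p^n+\delta^{-\alpha}p^{n-\tfrac12+\epsilon_0}\right),
\end{equation*}
where by the remark after Proposition \ref{prop:prec_err_est} the exponent $\alpha$ may be chosen arbitrarily close to $2.5$ from above.

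Finally I would optimize in $\delta$: balancing the two summands yields $\delta=p^{-(1/2-\epsilon_0)/(1+\alpha)}$ and hence an overall error of order $p^{\,n-(1/2-\epsilon_0)/(1+\alpha)}$. Driving $\alpha\to 2.5^{+}$ and $\epsilon_0\to 0^{+}$, the exponent approaches $n-1/(2\cdot 3.5)=n-1/7$, so for any prescribed $\epsilon>0$ one can arrange an error of size $O_{n,\epsilon,g}(p^{\,n-1/7+\epsilon})$, as claimed. There is no serious obstacle once Theorem \ref{thm:grass_smooth} is granted; the only mildly subtle point is that the full-measure set produced by Theorem \ref{thm:grass_smooth} should be independent of the particular sequence $\delta=\delta(p)$ one uses, but this is automatic since the Borel--Cantelli estimates of Lemmas \ref{lemma:sch1}--\ref{lemma:sch2} depend on $\delta$ only through an explicit $\delta^{-\alpha}$ factor, so the argument may be rerun verbatim with $\delta=\delta_N$ varying along the dyadic scale $p_N$.
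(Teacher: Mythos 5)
Your proposal is correct and follows essentially the same route as the paper: sandwich $h_p$ between $f_{pe^{-\delta},\delta}$ and $f_{pe^{\delta},\delta}$, apply Theorem \ref{thm:grass_smooth}, use Lemma \ref{lemma:sundry}(iii) to replace $Mf_{pe^{\pm\delta},\delta}(n)$ by $Mh_p(n)+O_n(\delta p^n)$, and then balance the two error terms --- the paper simply sets $\delta=p^{-1/7}$, which is the optimal choice you derive. Your closing observation, that the full-measure set from Lemmas \ref{lemma:sch1}--\ref{lemma:sch2} can be taken independently of $\delta$ because the Borel--Cantelli estimates involve $\delta$ only through an explicit $\delta^{-\alpha}$ factor, makes explicit a point the paper leaves implicit when it chooses $\delta$ depending on $p$.
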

\begin{proof}
One can check that, for every $g \in \G\backslash G$,
\begin{align*}
E_{f_{pe^{-\delta},\delta}}(g) \leq E_{h_p}(g) \leq E_{f_{pe^\delta},\delta}(g).
\end{align*}
By Lemma \ref{lemma:sundry}(iii) and Theorem \ref{thm:grass_smooth}, this implies that for almost every $g \in \G\backslash G$ and $p \gg_{g} 0$
\begin{align*}
E_{h_p}(g) - Mh_p(n)\xi(n,k) = O_{n,\epsilon_0,g}\left(\delta p^n + \delta^{-\alpha}p^{n-\frac{1}{2}+\epsilon_0} \right).
\end{align*}
As remarked after the statement of Proposition \ref{prop:prec_err_est}, $\alpha > 5/2$ can be made arbitrarily close to $5/2$; and of course, $\epsilon_0$ can be set to be arbitrarily small. Hence if we choose $\delta = p^{-1/7}$, the right-hand side above becomes $O_{n,\epsilon,g}(p^{n-1/7+\epsilon})$ for an arbitrarily small $\epsilon$.
\end{proof}

\newpage
\appendix

\section{The saddle point method}

The purpose of this appendix is to elaborate on the computations of \cite{Joh15}, for completeness of our paper. 
For $k \in \C$, let $F(k)=\int_{-1}^1e^{ikx-\frac{1}{1-x^2}}dx$. Because for $\re k\leq 0$, we have $F(k)=F(-k)$, it is sufficient to consider the case that $\re k\geq 0$. Letting $t=1-x$, this equals to $\int_0^2 e^{ik(1-t)-\frac{1}{t(2-t)}}dt$, so 
$$F(k)=e^{ik}\int_0^2 e^{-ikt-\frac{1}{2t}-\frac{1}{4-2t}}dt.$$
Note that on $t\in (0,\frac{1}{\re k})$, $e^{-\frac{1}{2t}}<e^{-\frac{\re k}{2}}$, and similarly on $t\in (2-\frac{1}{\re k}, 2)$, $e^{-\frac{1}{4-2t}}<e^{-\frac{\re k}{2}}$, hence the integrand near $0$ or $2$ is negligible. 

Letting $h(t)=-ikt-\frac{1}{2t}-\frac{1}{4-2t}$, $h'(t)=-ik+\frac{1}{2t^2}-\frac{2}{(4-2t)^2}=-ik+\frac{-16(t-1)}{8t^2(2-t)^2}$. Solving $h'(t)=0$ is not easy. Instead, we consider expansions of it near $0$ and $2$ respectively: for example, near $0$ we have

$$-ikt-\frac{1}{2t}-\frac{1}{4-2t}=-ikt-\frac{1}{2t}-\frac{1}{4}+O(t).$$
Then the integral $\int_0^2 e^{-ikt-\frac{1}{2t}-\frac{1}{4}+O(t)}dt= \int_0^2 e^{-ikt-\frac{1}{2t}-\frac{1}{4}}(1+O(t))dt$ near $0$. 

Let $$g(t)=-ikt-\frac{1}{2t}-\frac{1}{4}.$$ 
Then $g'(t)=-ik+\frac{1}{2t^2}$ and solving $g'(t)=0$ gives $$t_1=\frac{1}{\sqrt{2ik}}=\frac{1-i}{2\sqrt{k}},$$ at which $e^{-ikt-\frac{1}{2t}-\frac{1}{4}}$ is maximal. Similarly, near $2$ the same reasoning gives us $$t_2=2-\frac{1+i}{2\sqrt{k}}.$$ We want to move the path of integral to another path through these saddle points and apply the saddle point method to find the main term, and do an error analysis to bound error. Let $k=re^{i\theta}$, where $r=|k|$ and $\theta\in(-\frac{\pi}{2},\frac{\pi}{2})$. Then $\sqrt{k}=\sqrt{r}e^{\frac{i\theta}{2}}$, $t_1=\frac{e^{-i(\frac{\theta}{2}+\frac{\pi}{4})}}{\sqrt{2r}}$ and $t_2=2-\frac{e^{-i(\frac{\theta}{2}-\frac{\pi}{4})}}{\sqrt{2r}}$.

Let $C$ be the path consisting of the line $C_1(\theta)\subset \{x+yi\in\mathbb{C} \ | \ y=-\tan(\frac{\theta}{2}+\frac{\pi}{4})x\} $ joining $0$ and $1-ie^{-i\theta}$, and the line $C_2(\theta)\subset \{x+yi\in\mathbb{C} \ | \ y=\cot(\frac{\theta}{2}+\frac{\pi}{4})(x-2)\} $ joining $1-ie^{-i\theta}$ and $2$ --- one can easily check that the two lines indeed meet at $1-ie^{-i\theta}$.
Then it holds that
\begin{align*}
F(k)=e^{ik}\left(\int_{C_1(\theta)}e^{-ikt-\frac{1}{2t}-\frac{1}{4-2t}}dt+\int_{C_2(\theta)} e^{-ikt-\frac{1}{2t}-\frac{1}{4-2t}}dt\right).
\end{align*}
We will work on one integral at a time.

For the integral over $C_1(\theta)$, first consider, for $\alpha\in (r^{-\frac{1}{4}+\epsilon},1)$, the quantity $t=\frac{1}{\sqrt{2k}}e^{-\frac{i\pi}{4}}(1-\alpha)=\frac{1}{\sqrt{2r}}e^{-i(\frac{\theta}{2}+\frac{\pi}{4})}(1-\alpha)=t_1(1- \alpha )$ in the line connecting $0$ and $t_1$. We have 
$$|e^{-ikt-\frac{1}{2t}-\frac{1}{4-2t}}|=|e^{-ikt_1-\frac{1}{2t_1}-\frac{1}{4}}||e^{ikt_1\alpha-\frac{\alpha}{2t_1(1-\alpha)}}e^{O(t_1)}|$$
and
\begin{align*}
\re \left(ikt_1\alpha-\frac{\alpha}{2t_1(1-\alpha)}\right)&=\re \left(ire^{i\theta}\frac{1}{\sqrt{2r}}e^{-i(\frac{\theta}{2}+\frac{\pi}{4})}\alpha-\frac{\alpha}{2\frac{1}{\sqrt{2r}}e^{-i(\frac{\theta}{2}+\frac{\pi}{4})}(1-\alpha)}\right)\\
& = -\frac{\alpha\sqrt{r}}{\sqrt{2}}\left(\sin\left(\frac{\theta}{2}-\frac{\pi}{4}\right)+\cos\left(\frac{\theta}{2}+\frac{\pi}{4}\right)\frac{1}{1-\alpha}\right) \\
&=-\frac{\alpha^2\sqrt{r}}{\sqrt{2}(1-\alpha)} \cos\left(\frac{\theta}{2}+\frac{\pi}{4}\right) \leq -\frac{r^{2\epsilon}}{\sqrt{2}} \cos\left(\frac{\theta}{2}+\frac{\pi}{4}\right),
\end{align*}
which imply
\begin{align*}
|e^{-ikt-\frac{1}{2t}-\frac{1}{4-2t}}|\ll|e^{-ikt_1-\frac{1}{2t_1}-\frac{1}{4}}| e^{- \frac{r^{2\epsilon}}{\sqrt{2}} \cos\left(\frac{\theta}{2}+\frac{\pi}{4}\right)}.
\end{align*}
Similarly for $t=t_1(1+\beta)\in C$, where $\beta\geq r^{ -\frac{1}{4}+\epsilon}$, we have
$$|e^{-ikt-\frac{1}{2t}-\frac{1}{4-2t}}|=|e^{-ikt_1-\frac{1}{2t_1}-\frac{1}{4}}||e^{-ikt_1\beta+\frac{\beta}{2t_1(1+\beta)}}e^{O(t_1)}|$$ and
\begin{align*}
\re \left(-ikt_1\beta+\frac{\beta}{2t_1(1+\beta)}\right)&=\frac{\beta\sqrt{r}}{\sqrt{2}}\left(\sin\left(\frac{\theta}{2}-\frac{\pi}{4}\right)+\cos\left(\frac{\theta}{2}+\frac{\pi}{4}\right)\frac{1}{1+\beta}\right)\\
&=-\frac{\beta^2\sqrt{r}}{\sqrt{2}(1+\beta)} \cos\left(\frac{\theta}{2}+\frac{\pi}{4}\right) \\ & \leq -\frac{r^{2\epsilon}}{\sqrt{2}(1+r^{-\frac{1}{4}+\epsilon})} \cos\left(\frac{\theta}{2}+\frac{\pi}{4}\right),
\end{align*}
and hence
\begin{align*}
|e^{-ikt-\frac{1}{2t}-\frac{1}{4-2t}}|\ll|e^{-ikt_1-\frac{1}{2t_1}-\frac{1}{4}}| e^{- \frac{r^{\frac{1}{2}-2\epsilon}}{\sqrt{2}(1+r^{-\epsilon})} \cos\left(\frac{\theta}{2}+\frac{\pi}{4}\right)}.
\end{align*}

From these estimations, we see that the main contribution comes from $t=t_1(1+\gamma)\in C$ with $\gamma\in (-r^{-\frac{1}{4}+\epsilon},r^{-\frac{1}{4}+\epsilon})$.
We have for $g(t)=-ikt-\frac{1}{2t}-\frac{1}{4}$, 
\begin{align*}
g(t)&=g(t_1)+\frac{g'(t_1)}{1}(t-t_1)+\frac{g''(t_1)}{2!}(t-t_1)^2+\cdots\\
&=g(t_1)+\frac{g''(t_1)}{2!}(t-t_1)^2+\frac{1}{2}\sum_{m=3}^{\infty}\left(\frac{-1}{t_1}\right)^{m+1}(t-t_1 )^m \\
&=-ikt_1-\frac{1}{2t_1}-\frac{1}{4}-\frac{\gamma^2}{2t_1}-\frac{1}{2t_1}\sum_{m=3}^{\infty}( -\gamma)^m\\
&=-ikt_1-\frac{1}{2t_1}-\frac{1}{4}-\frac{\gamma^2}{2t_1}+O(|t_1|^{-1}\gamma^3).
\end{align*}
Also, writing $h(t)=e^{-\frac{2t}{4(4-2t)}}$,
\begin{align*}
&\int_{t_1(1-r^{-\frac{1}{4}+\epsilon})}^{t_1(1+r^ { -\frac{1}{4}+\epsilon})}e^{-ikt-\frac{1}{2t}-\frac{1}{4-2t}}dt=\int_{t_1(1-r^{-\frac{1}{4}+\epsilon})}^{t_1(1+r^ { -\frac{1}{4}+\epsilon})}h(t)e^{g(t) }dt\\
&=t_1\int_{ -r^{-\frac{1}{4}+\epsilon} }^{ r^ { -\frac{1}{4}+\epsilon} }h(t_1(1+\gamma))e^{g(t_1(1+\gamma)) }d\gamma\\
&=t_1e^{-ikt_1-\frac{1}{2t_1}-\frac{1}{4}}\int_{ -r^{-\frac{1}{4}+\epsilon} }^{ r^ { -\frac{1}{4}+\epsilon} }h(t_1(1+\gamma))e^{-\frac{\gamma^2}{2t_1}-\frac{1}{t_1}\sum_{m=3}^{\infty}(-\gamma)^m}d\gamma \\
&=t_1\sqrt{2t_1}e^{-ikt_1-\frac{1}{2t_1}-\frac{1}{4}}\int_{ -\frac{r^{-\frac{1}{4}+\epsilon}}{\sqrt{2t_1}} }^{\frac{r^ { -\frac{1}{4}+\epsilon}}{\sqrt{2t_1}}}h(t_1(1+\sqrt{2t_1}\gamma))e^{- \gamma^2 - 2\sum_{m=3}^{\infty}(2t_1)^{\frac{m}{2}-1}(-\gamma)^m}d\gamma\\ 
&=t_1\sqrt{2t_1}e^{-ikt_1-\frac{1}{2t_1}-\frac{1}{4}}\int_{ -\frac{r^{-\frac{1}{4}+\epsilon}}{\sqrt{2t_1}} }^{\frac{r^ { -\frac{1}{4}+\epsilon}}{\sqrt{2t_1}}} e^{- \gamma^2 - \sum_{m=3}^{\infty}t_1^{\frac{m}{2}-1}(-\gamma)^m}d\gamma+O\left(|t_1|^{\frac{5}{2}}|e^{-ikt_1-\frac{1}{2t_1}-\frac{1}{4}}|\right).
\end{align*}
Here, it holds that
\begin{align*}
\int_{ -\frac{r^{-\frac{1}{4}+\epsilon}}{\sqrt{2t_1}} }^{\frac{r^ { -\frac{1}{4}+\epsilon}}{\sqrt{2t_1}}} e^{- \gamma^2 - 2\sum_{m=3}^{\infty}(2t_1)^{\frac{m}{2}-1}(-\gamma)^m}d\gamma &=  \int_{ -  r^{\epsilon}}^{ r^ { \epsilon}} e^{- \gamma^2 -2 \sum_{m=3}^{\infty}(2t_1)^{\frac{m}{2}-1}(-\gamma)^m}d\gamma+O(e^{-\frac{1}{2}r^{\epsilon}})\\
&=\int_{ -r^{\epsilon} }^{ r^ {\epsilon}} e^{- \gamma^2} d\gamma+O(|t|^{\frac{1}{2}}r^{3\epsilon})+O(e^{-\frac{1}{2}r^{-\epsilon}(2r)^{\frac{1}{4}}}) \\ &= \int_{ - \infty }^{ \infty} e^{- \gamma^2 }d\gamma+O(r^{-\frac{1}{4}+3\epsilon})= \sqrt{\pi}+O(r^{-\frac{1}{4}+3\epsilon}).
\end{align*}
Summing up everything together, we conclude that
\begin{align*}
e^{ik}\int_{C_1(\theta)}e^{-ikt-\frac{1}{2t}-\frac{1}{4-2t}}dt&=e^{ik}t_1\sqrt{2t_1}e^{-ikt_1-\frac{1}{2t_1}-\frac{1}{4}}\left(\sqrt{\pi}+O(r^{-\frac{1}{4}+3\epsilon})\right)\\
&=\sqrt{\frac{-i\pi}{\sqrt{2i}k^{\frac{3}{2}}}}e^{ik-\sqrt{2ik}-\frac{1}{4 }}\Big(1+O(|k|^{-\frac{1}{4}+3\epsilon})\Big).
\end{align*}

For the integral over $C_2(\theta)$, we first substitute $t$ by $2-t$ to rewrite
\begin{align*}
 e^{ik}\int_{C_2(\theta)} e^{-ikt-\frac{1}{2t}-\frac{1}{4-2t}}dt=e^{-ik}\int_{C_2'(\theta)} e^{ikt-\frac{1}{2t}-\frac{1}{4-2t}}dt,
\end{align*}
where $C_2'(\theta)\subset \{x+yi\in\mathbb{C} \ | \ y=\cot(\frac{\theta}{2}+\frac{\pi}{4})x\} $ joining $0$ and $1+e^{-i\theta}$.
Note that $\cot(\frac{\theta}{2}+\frac{\pi}{4})=-\tan (\frac{\pi+\theta}{2}+\frac{\pi}{4})$ and  and $1+ie^{i\theta}=1-ie^{i(\pi+\theta)}$.
Therefore we see that 
\begin{align*}
 e^{ik}\int_{C_2(\theta)} e^{-ikt-\frac{1}{2t}-\frac{1}{4-2t}}dt= e^{i(-k)}\int_{C_1(\pi +\theta)} e^{-i(-k)t-\frac{1}{2t}-\frac{1}{4-2t}}dt.
\end{align*}
Similarly as in the integral over $C_1(\theta)$ earlier, we obtain
\begin{align*}
e^{ik}\int_{C_2(\theta)}e^{-ikt-\frac{1}{2t}-\frac{1}{4-2t}}dt& =e^{i(-k)}\int_{C_1(\pi+\theta)}e^{-i(-k)t-\frac{1}{2t}-\frac{1}{4-2t}}dt\\
&=\sqrt{\frac{-i\pi}{\sqrt{2i}(-k)^{\frac{3}{2}}}}e^{i(-k)-\sqrt{2i(-k)}-\frac{1}{4 }}\Big(1+O(|k|^{-\frac{1}{4}+3\epsilon})\Big)\\
&=\sqrt{\frac{i\pi}{\sqrt{-2i}k^{\frac{3}{2}}}}e^{-ik-\sqrt{-2ik}-\frac{1}{4 }}\Big(1+O(|k|^{-\frac{1}{4}+3\epsilon})\Big)
\end{align*}

Our computations so far yield the conclusion that
\begin{align*}
F(k)&=\sqrt{\frac{-i\pi}{\sqrt{2i}k^{\frac{3}{2}}}}e^{ik-\sqrt{2ik}-\frac{1}{4 }}\Big(1+O(|k|^{-\frac{1}{4}+3\epsilon})\Big)\\
& \ \ +\sqrt{\frac{i\pi}{\sqrt{-2i}k^{\frac{3}{2}}}}e^{-ik-\sqrt{-2ik}-\frac{1}{4 }}\Big(1+O(|k|^{-\frac{1}{4}+3\epsilon})\Big).
\end{align*}
In particular, for real $k$,
$$F(k)=2\re \left(\sqrt{\frac{-i\pi}{\sqrt{2i}k^{\frac{3}{2}}}}e^{ik-\sqrt{2ik}-\frac{1}{4 }}\Big(1+O(|k|^{-\frac{1}{4}+3\epsilon})\Big)\right).$$

\end{document}